\definecolor{myblue}{rgb}{0.0, 0.0, 1.0}
\definecolor{mygreen}{rgb}{0.01,0.75,0.20}
\newtheorem{theorem}{Theorem}[section]
\newtheorem{corollary}{Corollary}
\newtheorem{lemma}[theorem]{Lemma}
\newtheorem{proposition}{Proposition}
\theoremstyle{definition}
\newtheorem{definition}[theorem]{Definition}
\newtheorem{remark}{Remark}
\newcommand{\ep}{\varepsilon}
\DeclarePairedDelimiter\abs{\lvert}{\rvert}%
\DeclarePairedDelimiter\norm{\lVert}{\rVert}%
\let\oldnorm\norm
\def\norm{\@ifstar{\oldnorm}{\oldnorm*}}
\newcommand{\al} {\alpha}
\newcommand{\pa} {\partial}
\newcommand{\de} {\delta}
\newcommand{\De} {\Delta}
\newcommand{\Dep} {\Delta_p}
\newcommand{\ga} {\gamma}
\newcommand{\Om} {\Omega}
\newcommand{\la} {\lambda}
\newcommand{\La} {\Lambda}
\newcommand{\Gr} {\nabla}
\newcommand{\no} {\nonumber}
\newcommand{\noi} {\noindent}
\newcommand{\ra} {\rightarrow}
\newcommand{\wra} {\rightharpoonup}
\newcommand{\cd}{\circledast}
\newcommand\restr[2]{{% we make the whole thing an ordinary symbol
  \left.\kern-\nulldelimiterspace % automatically resize the bar with \right
  #1 % the function
 % \vphantom{\big|} % pretend it's a little taller at normal size
  \right|_{#2} % this is the delimiter
  }}
  \newcommand{\Halu}{H_{\widetilde{\alpha}}}
  \newcommand{\Omu}{\Om_{H_{\widetilde{\alpha}}}}
\def\w{{\widetilde w}}
\def \uphi {{\underline{\phi}}}
\def \ophi {{\overline{\phi}}}
\def \ug {{\underline{g}}}
\def \uv {{\underline{V}}}
\def \og {{\overline{g}}}
\def \ov {{\overline{V}}}
\def\wp{{W^{1,p}_0(\Om)}}
\def\w2{{W^{1,2}_0(\Om)}}
\def\hh2{{H^1_0(\Om)}}
\def\C{{\mathcal C}}
\def\E{{\mathcal E}}
\def\N{{\mathbb N}}
\def\F{{\mathcal F}}
\def\S{\mathbb{S}}
\def\R{{\mathbb R}}
\def\RN{{\mathbb R}^N}
\def\({{\Big(}}
\def\){{\Big)}}
\def\ws2{{\F_{\frac{N}{2}}}}
\def\c1{{\C_c^1}}
\def\dis{{\displaystyle \int_{\Omega}}}
\def\p{{p^{\prime}}}
\def\f{{\tilde{f}}}
\def\g{{\tilde{g}}}
\def\dx{{\rm d}x}
\def\H{{\mathcal{H}}}
\title{On the Optimization of the first weighted eigenvalue}
\author[Nirjan Biswas, Ujjal Das, and Mrityunjoy Ghosh]{Nirjan Biswas$^1$, Ujjal Das$^{2,*}$, and Mrityunjoy Ghosh$^3$}
\subjclass{Primary: 35B06, 49J30, 35P15, 35Q93, 49R05}
\keywords{Optimization of the principal eigenvalue, Polarization invariance, Schwarz symmetry, Steiner symmetry, Foliated Schwarz symmetry}
\email{nirjan22@tifrbng.res.in}
\email{ujjal.rupam.das@gmail.com}
\email{ghoshmrityunjoy22@gmail.com}
\thanks{$^*$Corresponding author.}
\begin{document}
\maketitle

% Enter the first author's name and address:
%\centerline{\scshape }
%\medskip
% {\footnotesize
%  please put the address of the first author
 \centerline{$^{1}$Tata Institute of Fundamental Research, Centre For Applicable Mathematics,}
 \centerline{Post Bag No 6503, Sharada Nagar,}
 \centerline{Bangalore 560065, India}
% }
%\medskip
%\centerline{\scshape }
%\medskip
%{\footnotesize
 % please put the address of the second  and third author
 \centerline{$^2$Department of Mathematics, Technion - Israel Institute of
		Technology,   }
   \centerline{Haifa 32000, Israel}
   
   \centerline{$^{3}$Department of Mathematics, Indian Institute of Technology Madras,}
  \centerline{Chennai 600036, India}
%}
%\medskip
%\centerline{\scshape }
%\medskip
%{\footnotesize
% please put the address of the first author
 %\centerline{Department of Mathematics, Indian Institute of Technology Madras}
 %  \centerline{ Chennai 600036, India}
%} % Do not forget to end the {\footnotesize by the sign }
%\medskip
%\bigskip
% The name of the associate editor will be entered by an editorial staff
% "Communicated by the associate editor name" is not needed for special issue.
% \centerline{(Communicated by the associate editor name)}
% The data information below will be filled by AIMS editorial staff
%\ \ \ \ \ \ \ \ \ \ \ \ \  \qquad \qquad Received xxxx 20xx; revised xxxx 20xx.

\begin{abstract}
For $N\geq 2$, a bounded smooth domain $\Omega$ in $\mathbb{R}^N$, and  $g_0, V_0 \in L^1_{loc}(\Omega)$, we study the optimization of the first eigenvalue for the following weighted eigenvalue problem:
\begin{align*} 
 -\Delta_p \phi + V |\phi|^{p-2}\phi = \lambda g |\phi|^{p-2}\phi  \text{ in } \Omega, \quad \phi=0 \text{ on } \partial \Omega, 
\end{align*}
where $g$ and $V$ vary over the rearrangement classes of $g_0$ and $V_0$, respectively. We prove the existence of a minimizing pair $(\underline{g},\underline{V})$ and a maximizing pair $(\overline{g},\overline{V})$ for $g_0$ and $V_0$ lying in certain Lebesgue spaces.  
  We obtain various qualitative properties such as polarization invariance, Steiner symmetry of the minimizers as well as the associated eigenfunctions for the case $p=2$.
For annular domains, we prove that the minimizers and the corresponding eigenfunctions possess the foliated Schwarz symmetry.  
\end{abstract}

\section{Introduction}
Let $N \ge 2$ and $\Om$ be a smooth bounded domain in $\RN$. For $p \in (1, \infty)$ and $g,V\in L^1_{loc}(\Om)$, we consider the following weighted eigenvalue problem:
\begin{equation}\label{EVP}
\begin{aligned} 
 -\De_p \phi + V |\phi|^{p-2}\phi &= \la g |\phi|^{p-2}\phi  \text{ in } \Om,\\
 \phi&=0 \quad\qquad \;\;\;\text{ on } \pa \Om,
 \end{aligned}
 \end{equation}
where $\Dep \phi := \text{div}(|\nabla \phi|^{p-2}\nabla \phi)$ is the $p$-Laplace operator and $\la $ is a real parameter. We say $\la$ is an eigenvalue of \eqref{EVP}, if there exists $\phi \in \wp \setminus \{0\}$ so that the following identity holds:
\begin{align*}
    \int_{\Om} |\nabla \phi|^{p-2}\nabla \phi . \nabla \psi \, \dx & + \int_{\Om} V  |\phi|^{p-2}\phi \psi \, \dx 
     = \la \int_{\Om} g |\phi|^{p-2}\phi \psi \, \dx, \quad \forall \,  \psi \in \wp.
\end{align*}
Let $g,V$ be such that
$$ \La(g,V) := \inf \left\{ \frac{\int_{\Om} |\Gr \phi|^p + V|\phi|^p } { \int_{\Om} g |\phi|^p}  : \phi \in \wp, \int_{\Om} g |\phi|^p > 0 \right\} $$
is positive. If $\La(g,V)$ is attained at some $\phi \in \wp$, then we say
$\La(g,V)$ is the first eigenvalue and $\phi$ is a first eigenfunction of \eqref{EVP}. In the context of studying eigenvalue problems, many authors have provided various sufficient conditions on $g,V$ so that the first eigenvalue is simple (i.e., any two first eigenfunctions are constant multiple of each other), and principal (i.e., first eigenfunctions do not change their sign). For example, we refer \cite{AMM, Cuesta2009, Leadi, Szulkin} to list a few and the references therein. In this article, we make the following assumptions on $g$ and $V$:
\begin{equation}\tag{\textbf{A}}\label{A}
\begin{aligned}
& (\textbf{A1}) \quad g,V \in X:= \left\{ \begin{array}{ll}
 L^{\frac{N}{p}}(\Om), &  \text{ if }  N > p; \vspace{0.1 cm} \\
             L^q(\Om); q \in (1, \infty), &  \text{ if }  N\leq p, 
              \end{array}\right. \\
& (\textbf{A2}) \quad g^+ \not\equiv 0 \text{ and } \norm{V^{-}}_{X} \le \displaystyle \frac{1-\de_0}{S^p}, \text{ for some } \delta_0 \in (0, 1),
\end{aligned}
\end{equation}
where for a function $f:\Om\mapsto \mathbb{R}$, $f^{\pm}(x) := \max\{ \pm f(x), 0 \}$, $S$ is the embedding constant of $\wp \hookrightarrow L^{p^*}(\Om)$ ($p^*= \frac{pN}{N-p}$) if $N > p$ and $\wp \hookrightarrow L^{pq'}(\Om)$  if $N \leq p$. For $g, V$ satisfying \eqref{A}, using variational technique and the Picone's identity, one can  show that $\La(g,V)$ (for instance, see \cite{Leadi} when $\Om$ is unbounded) is a simple principal eigenvalue of \eqref{EVP}. However, for the sake of completeness, we give a proof of these results for bounded domains in the Appendix (Theorem \ref{Existence1}). Now,
for $g_0, V_0$ as given in \eqref{A}, we define:
\begin{align*}
 \La_{\min}(g_0,V_0) &=   \inf \left\{ \La(g,V): g \in \E(g_0), V \in \E(V_0) \right\}, \\
 \La_{\max}(g_0,V_0) &=   \sup \left\{ \La(g,V): g \in \E(g_0), V \in \E(V_0) \right\},
\end{align*}
where $\E(f)$ is the set of all rearrangements of a measurable function $f$, which is defined as 
$$\E(f)=\{h: \Om \mapsto \R: \, h \ \mbox{is measurable}, \, |\{h>t\}| = |\{f>t\}|, \; \forall \, t \in \R\}.$$ 
In this article, we are concerned about the following optimization problems:
\begin{align}
    \text{\textit{does there exist}}\;(\ug,\uv)\in \E(g_0)\times \E(V_0)\;\text{\textit{such that}}\;\La_{\min}(g_0,V_0)=\La(\ug,\uv)?\label{OurProb_min}\\
    \text{\textit{does there exist}}\;(\og,\ov)\in \E(g_0)\times \E(V_0)\;\text{\textit{such that}}\;\La_{\max}(g_0,V_0)=\La(\og,\ov) \label{OurProb_max}?
\end{align}

The above problems have significant importance in the study of elasticity theory, population dynamics, and many other mathematical models. For example, the following diffusive logistic equation is considered in \cite{Skellam}:
\begin{equation}\label{logistic}
    \begin{aligned}
    u_t - \De u &= \mu (g_0-u)u \quad \mbox{in} \; \Om\times (0,\infty),  \\
    u &= 0 \quad   \mbox{on} \; \partial \Om\times (0,\infty), \\
    u(x,0) & \gneqq 0 \quad  \mbox{on} \;  \overline{\Om}, 
    \end{aligned}
\end{equation}
where $u(x,t)$ represents the density of a species at position $x$ and time $t$, $g_0$ is a weight function, $\mu$ is  a positive parameter, and $u=0$ on $\partial \Om\times (0,\infty)$ (i.e., Dirichlet conditions) represents that the region outside the domain is completely lethal. In this mathematical model, one can predict the persistence or extinction of a species by means of certain parameters that are directly related to the principal eigenvalue of Laplacian \cite{Cantrell,Robert}. More precisely, in \eqref{logistic}, $\mu > \La_{\max}(g_0,0)$ ensures the survival of the species and $\mu \leq \La_{\min}(g_0, 0)$ leads to the extinction of the species as time $t$ increases. In this viewpoint, it is important to identify an optimal distribution of resources that optimize $\Lambda(g,0)$ over the rearrangement class.  
Also, studying qualitative properties of such optimizers (if it exists) is equally essential to know the nature of an optimal arrangement, such as the shape of a favorable and unfavorable zone for the species to survive, fragmentation/concentration phenomena, etc. To see more such problems, see \cite{Berestycki2005, Mazari2020} and the references therein.

The objective of this article is twofold. Firstly, we study the existence of optimizers in \eqref{OurProb_min}-\eqref{OurProb_max} for a general class of weight functions and potentials as mentioned in \eqref{A}. Secondly,  we investigate the geometry of the optimizers.

\subsection{Existence of optimizers}
Krein \cite[for $N=1,p=2$]{Krein} and  Cox-McLaughlin \cite[$N\geq 1$, $p=2$]{Cox} have studied the optimization problems \eqref{OurProb_min}-\eqref{OurProb_max} for $V_0=0,$ $g_0= \al \chi_D + \beta \chi_{\Om\setminus D}$, $0\leq \al<\beta$ and $D \subset \Om$ with $0 < |D|=c < |\Om|$, where the optimization was taken over the class
$$\mathcal{A}_{\al,\beta,c}=\left\{g \in L^{\infty}(\Om): \al \leq g \leq \beta, \int_{\Om} g = c\right\}.$$ Several authors have considered similar problems where the optimization parameter varies over different admissible classes, e.g. \cite{Ashbaugh, Bonder,  Pielichowski, Takac2010}.  The authors of \cite{Cuccu} considered the optimization problems \eqref{OurProb_min}-\eqref{OurProb_max} over the rearrangement class $\mathcal{E}(g_0)$. If $V_0=0$ and $g_0 \in L^{\infty}(\Om)$ with $g_0\geq 0$, they have proved that $\La_{\min}(g_0,0)$ and $ \La_{\max}(g_0,0)$ are attained in $\mathcal{E}(g_0)$ and $ \overline{\mathcal{E}(g_0)}$ (weak* closure of $\mathcal{E}(g_0)$ in $L^{\infty}(\Om)$) respectively. In addition, if $\Om$ is a ball, they showed that $ \La_{\max}(g_0,0)$ is attained in $\mathcal{E}(g_0)$ itself. For the minimization problem \eqref{OurProb_min}, in \cite{Prajapat}, authors consider nonnegative $g_0, V_0 \in L^{\infty}(\Om)$ with certain restriction on the norm of $V_0$. In \cite{Leandro}, authors have enlarged the class of weight functions and potentials for the existence of minimizer by considering $g_0,V_0 \in L^q(\Om)$ with $q > \frac{N}{p}$ (if $N\geq p$), and $q=1$ (if $N<p$), and certain restriction on  $\norm{V_0^-}_{L^q}$. In order to get the existence of optimizers, 
the compactness of the Sobolev embedding $\wp \hookrightarrow L^r(\Om)$ with $r<p^*$ (if $N >p$) and $r< \infty$ (if $N \le p$) plays an important role. In this article, we extend all the existence results for \eqref{OurProb_min}-\eqref{OurProb_max} with $g_0,V_0$ satisfying \eqref{A}. Now we state our results.
\begin{theorem}\label{optimization}
Let $\Om$ be a smooth bounded domain in $\R^N$.  Assume that $g_0, V_0$ satisfy \eqref{A}. Then the following holds:
\begin{enumerate}
    \item[(i) \rm{{(\textbf{Existence of minimizer})}}] There exists $(\ug ,\uv)\in \E(g_0) \times \E(V_0)$ such that
$$\La_{\min}(g_0, V_0) = \La(\ug,\uv),$$
\item[(ii) \rm{{(\textbf{Existence of maximizer})}}] In addition, if $g_0\geq 0$, then there exists $(\og, \ov) \in \E(g_0) \times \E(V_0)$ such that
$$ \La_{\max}(g_0, V_0) = \La(\og,\ov).$$
\end{enumerate}
\end{theorem}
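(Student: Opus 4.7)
The plan is the direct method on the Rayleigh quotient: take a minimizing (resp.\ maximizing) sequence $(g_n,V_n)\in\E(g_0)\times\E(V_0)$, extract weak limits, pass to the limit, and invoke Burton's rearrangement theorem to replace the weak limits---which a priori lie only in the weak closures of the rearrangement classes---by genuine elements of $\E(g_0)\times\E(V_0)$. Rearrangement preserves the $X$-norm, so every $V\in\E(V_0)$ still obeys $\|V^-\|_X\le(1-\de_0)/S^p$; combined with \eqref{A} and the Sobolev embedding this yields the uniform coercivity $\int_\Om|\Gr\phi|^p+V|\phi|^p\ge\de_0\|\Gr\phi\|_p^p$. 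Normalizing the principal eigenfunctions $\phi_n\ge 0$ by $\int_\Om g_n|\phi_n|^p=1$ then bounds $\{\phi_n\}$ in $\wp$, while rearrangement invariance bounds $\{g_n\},\{V_n\}$ in the reflexive space $X$. Extracting subsequences yields $g_n\wra\ug\in\overline{\E(g_0)}^{w}$, $V_n\wra\uv\in\overline{\E(V_0)}^{w}$, $\phi_n\wra\phi$ in $\wp$ with $\phi_n\to\phi$ strongly in every subcritical $L^r$ and a.e.

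The first technical step is the convergence $\int_\Om g_n|\phi_n|^p\to\int_\Om\ug|\phi|^p$ and the analogue for $V_n$. In the critical regime $g_0,V_0\in L^{N/p}$ this is delicate, since $\{|\phi_n|^p\}$ is only weakly, not strongly, compact in $L^{N/(N-p)}=X^*$, so a naive weak--strong pairing fails. The rescue is that $\{|g_n|^{N/p}\}$ is equi-integrable---a direct consequence of every $g_n$ sharing the distribution of $g_0$---and combining this with Egorov's theorem, the uniform bound on $\||\phi_n|^p\|_{N/(N-p)}$, and a.e.\ convergence of $\phi_n$ upgrades the product to the desired convergence. Together with weak lower semicontinuity of $\|\Gr\cdot\|_p^p$, this yields for the minimization problem the Rayleigh-quotient bound
\[ R(\phi;\ug,\uv):=\frac{\int_\Om|\Gr\phi|^p+\uv|\phi|^p}{\int_\Om\ug|\phi|^p}\le\La_{\min}(g_0,V_0). \]

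The main remaining obstacle is that $\ug,\uv$ live only in the weak closures of the rearrangement classes. Here I would invoke Burton's classical theorem: since $|\phi|^p\in X^*$, the linear functional $f\mapsto\int_\Om f|\phi|^p$ attains its supremum over $\overline{\E(g_0)}^{w}$ at some $\ug^*\in\E(g_0)$ (increasingly rearranged along $|\phi|^p$) and its infimum over $\overline{\E(V_0)}^{w}$ at some $\uv^*\in\E(V_0)$. Thus $\int_\Om\ug^*|\phi|^p\ge\int_\Om\ug|\phi|^p$, $\int_\Om\uv^*|\phi|^p\le\int_\Om\uv|\phi|^p$, and the monotonicity of the Rayleigh quotient combined with the previous step gives
\[ \La(\ug^*,\uv^*)\le R(\phi;\ug^*,\uv^*)\le R(\phi;\ug,\uv)\le\La_{\min}(g_0,V_0), \]
forcing equality and proving (i).

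For (ii), the hypothesis $g_0\ge 0$ is used twice: it secures a uniform upper bound on $\La_{\max}$ (by testing with a fixed cutoff supported where a suitable sublevel of $g_0^*$ is forced to have mass, uniformly in $g\in\E(g_0)$) and prevents $\int_\Om g_n|\phi_n|^p$ from collapsing along the maximizing sequence. Upper semicontinuity $\La(\ug,\uv)\ge\limsup_n\La(g_n,V_n)=\La_{\max}$ follows from $\La(g_n,V_n)\le R(\psi;g_n,V_n)\to R(\psi;\ug,\uv)$ valid for each fixed $\psi\in\wp$; taking $\inf_\psi$ and $\limsup_n$ closes that half. A Burton step with the roles swapped---now $\og\in\E(g_0)$ minimizes and $\ov\in\E(V_0)$ maximizes $f\mapsto\int_\Om f|\phi|^p$ along the principal eigenfunction of the weak-limit problem---then produces the desired $(\og,\ov)$. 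This last step is genuinely subtler than its minimization counterpart because $\La$ is defined as an infimum, so the monotonicity inequality must be combined with the Euler--Lagrange structure of the limit eigenfunction to conclude $\La(\og,\ov)=\La_{\max}$; this is the single hardest part of the argument.
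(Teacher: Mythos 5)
Your proof of part (i) is correct but takes a genuinely different route from the paper at the one hard step, the passage to the limit in $\int_\Om g_n|\phi_n|^p$ when $N>p$ and $g_0\in L^{N/p}(\Om)$. Both you and the paper take a minimizing sequence $(g_n,V_n,\phi_n)$, extract weak limits, and invoke Burton's rearrangement inequality (Proposition~\ref{rearrangement}-$(iii)$) to upgrade the weak-closure limits to genuine elements of $\E(g_0)\times\E(V_0)$. The paper gets around the failure of compactness at the critical exponent by citing the Guedda--V\'eron integrability estimate and the Damascelli--Pardo gradient estimate (Proposition~\ref{regularity}) to bound the normalized eigenfunctions in $W_0^{1,r_2}(\Om)$ with $r_2>p$, hence strong convergence in $L^{p^*}(\Om)$ by compact embedding. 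You instead observe that $\{|g_n|^{N/p}\}$ is equi-integrable in $L^1(\Om)$---because every $g_n$ has the same distribution function as $g_0$---and combine this with Egorov's theorem on the a.e.\ convergent eigenfunctions: a small-measure exceptional set is controlled by equi-integrability together with the uniform $L^{p^*}$-bound, and on the complement $|\phi_n|^p\to|\phi|^p$ uniformly, so the weak convergence of $g_n$ closes the argument. This is more elementary: it needs no PDE regularity theory beyond the Sobolev embedding, at the cost of not yielding strong $L^{p^*}$-convergence of $\phi_n$ itself. Both routes are valid, and your closing chain $\La(\ug^*,\uv^*)\le R(\phi;\ug^*,\uv^*)\le R(\phi;\ug,\uv)\le\La_{\min}$ together with $\La(\ug^*,\uv^*)\ge\La_{\min}$ correctly forces equality.

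For part (ii), you mirror the paper's outline (weak limits, $\limsup_n\La(g_n,V_n)\le\La(g,V)$ by testing the Rayleigh quotient of $(g_n,V_n)$ at the limiting eigenfunction $\ophi$, then Burton), but the final step is not closed, and I do not think your description resolves the issue. Choosing $\og\in\E(g_0)$ to minimize and $\ov\in\E(V_0)$ to maximize $f\mapsto\int_\Om f\ophi^p$ gives
\[
\frac{\int_\Om |\Gr\ophi|^p+\ov\ophi^p}{\int_\Om \og\ophi^p}\ \ge\ \frac{\int_\Om |\Gr\ophi|^p+V\ophi^p}{\int_\Om g\ophi^p}\ =\ \La(g,V)\ \ge\ \La_{\max}(g_0,V_0).
\]
But this inequality points the wrong way: the left-hand side is the Rayleigh quotient of $(\og,\ov)$ \emph{at the test function $\ophi$}, hence only an \emph{upper} bound for $\La(\og,\ov)=\inf_\psi R(\psi;\og,\ov)$. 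Since we already know $\La(\og,\ov)\le\La_{\max}(g_0,V_0)$ trivially, nothing here yields the needed lower bound $\La(\og,\ov)\ge\La_{\max}(g_0,V_0)$. You correctly call this ``the single hardest part of the argument'' and allude to combining it with the Euler--Lagrange structure, but never say how, so this remains a genuine gap in the proposal. (For what it is worth, the paper's own chain ends with ``$\le\La_{\max}(g_0,V_0)$'' at this exact point without an explicit justification, so your diagnosis of difficulty is apt; but a complete proof must supply the missing step rather than gesture at it.)
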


For $N>p$ and $g_0,V_0 \in L^{\frac{N}{p}}(\Om)$, one of the main difficulties occurs in the minimization problem due to the non-compactness of the critical Sobolev embedding $\wp \hookrightarrow L^{p^*}(\Om)$. However, we overcome this by using certain regularity of the solution of \eqref{EVP} due to Guedda-Veron \cite{Guedda} and a gradient estimate obtained by Damascelli-Pardo \cite{Lucio}. For the maximization problem, we mainly use the rearrangement inequality (by Burton \cite{Burton}) to get the existence of maximizer in the rearrangement classes of $g_0,V_0$.

In Theorem \ref{optimization}, we call each of $(\uphi,\ug,\uv)$ and $(\ophi,\og,\ov)$ as an \textit{optimal triple}, where $\uphi$ is a first eigenfunction of \eqref{EVP} associated to $\ug,\uv$ and $\ophi$ is a first eigenfunction of \eqref{EVP} associated to $\og,\ov$. Notice that if $g_0, V_0$ are constant functions, then the rearrangement class is singleton.   
In these cases, we call an optimal triple as \textit{optimal pair}. We set 
\begin{align*}
    (\uphi,\ug,\uv):= \left\{ \begin{array}{ll}
        (\uphi,\ug), &  \text{ if $V_0$ is constant}, \vspace{0.1 cm} \\
        (\uphi,\uv), &  \text{ if $g_0$ is constant}, 
              \end{array}\right. 
\end{align*}
and the similar convention holds for $(\ophi,\og,\ov)$ as well.

\subsection{Symmetry of minimizers}
In the pioneering article \cite{Chanillo}, authors considered \eqref{OurProb_min} for $p=2$, $g_0=1$ and $V_0=\al\chi_D$, where $\al>0$ and $\chi_D$ is the characteristic function of a measurable set $D\subset \Om$, and proposed the question of whether, or not, an optimal pair $(\uphi,\uv)$ inherits the symmetry of the underlying domain $\Om$.  In \cite[Theorem 4]{Chanillo}, they proved that if $\Om$ is Steiner symmetric with respect to a hyperplane $P$ (i.e., $\Om$ is convex and symmetric with respect to $P$), then $(\uphi,\uv)$ is also Steiner symmetric with respect to $P$. They also  showed that a symmetry of the underlying domain  would not carry to an optimal pair $(\uphi,\uv)$ without the convexity assumption. For example,   \cite[Theorem 6]{Chanillo} provides a concentric annular region and a $V_0$ for which $(\uphi,\uv)$ is not rotationally symmetric. Furthermore, \cite[Theorem 7]{Chanillo} gives a dumbbell domain  for which the axial symmetry breaks for an optimal pair. In \cite[Section 6]{Chanillo}, authors have also conjectured several necessary and sufficient criteria on domains (concentric annulus, dumbbell, etc.) for which symmetry is preserved. 

For  certain  convex domains, the minimizers of \eqref{OurProb_min}  preserve the symmetry of the underlying  domains. For example, when $\Om=B_1(0)$, with the same assumptions on $g_0$ and $V_0$ as in \cite{Chanillo}, Pielichowski \cite{Pielichowski} proved that an optimal pair $(\uphi,\uv)$ is radial. For $V_0 \equiv 0$ and nonnegative $g_0\in L^\infty(B_1(0))$, in \cite{Cuccu} authors showed that an optimal pair $(\uphi,\ug)$ is radial and radially decreasing in $B_1(0)$. 
This result has been further extended by Emamizadeh-Prajapat \cite[Theorem 3.3]{Prajapat} for nonnegative $V_0 \in L^{\infty}(B_1(0))$ satisfying certain norm bound, and in addition, authors obtained that $\uv$ is radial and radially increasing in $B_1(0)$. For Steiner symmetric domains, the Steiner symmetry of  $(\uphi,\ug)$  is obtained in \cite[Theorem 3.1]{Anedda} for $g_0=\al \chi_D + \beta \chi_{\Om\setminus D}$ (where $0\leq \al<\beta$ and $D\subset \Om$ such that $0 < |D| < |\Om|$) and $V_0=0.$ For similar symmetry preserving results related to other variational problems in this direction, we refer to \cite{Cadeddu2011Porru,Cuccu2002Porru,Porru2011Jha, Kurata2004} and the references therein. We also refer to \cite{Lamboley,Mazzolini} for further   results  on the  symmetry of the optimal weights.

Notice that, for the domains where symmetry breaking  happens, the classical symmetrizations such as Schwarz and Steiner symmetrization were not applicable. However, it is natural to ask: for such domains, do optimal pairs have any  partial symmetry? In this article, using \textit{polarization} (also known as \textit{two-point symmetrization}; cf. \cite{Bianchi2020,Brock2000}), we prove various symmetries of an optimal triple on a more general class of domains (not necessarily simply connected) for the linear case (i.e., $p=2$ in \eqref{OurProb_min}). To the best of our knowledge, there are no such results available in the literature regarding the symmetry properties of an optimal triple for the problem \eqref{OurProb_min} on domains that are not simply connected, except a few counterexamples (for symmetry breaking) mentioned
earlier. Before stating our results, we first define polarization of a domain and polarization of a function.

\subsection*{Polarization} Let $H$ be an open affine half-space in $\R^N$ and $\sigma_H$ denote the reflection with respect to the boundary $\partial H$ of $H$.
\begin{definition}\label{pola_def}
 (i) The \textit{polarization} of $\Om\subset \R^N$ with respect to $H$ is defined as
\begin{equation*}
    \Om_H =\big((\Om\cup\sigma_H(\Om))\cap H\big)\cup\big(\Om\cap \sigma_H(\Om)\big).
\end{equation*}

    (ii)\label{Polarization_function}
 For a measurable function $f:\mathbb{R}^N \rightarrow \R$, the \textit{polarization} of $f$ with respect to $H$ is defined as
\begin{align*}
    f_H(x)=\left\{\begin{array}{cc}
        \max \{f(x),f(\sigma_H(x))\}, & \text{if} \ x \in H, \\
        \min \{f(x),f(\sigma_H(x))\}, & \ \text{if}\  x \notin H.
    \end{array} \right.
\end{align*}
For $\Om\subsetneq\R^N$, we define the \textit{polarization} of a function $f:\Om \rightarrow \R$ with respect to $H$ by $f_H=\tilde{f}_H|_\Om$, where $\tilde{f}$ is the extension of $f$ to $\mathbb{R}^N$ by 0 outside of $\Om$. We also define a \textit{dual-polarization} of $f$ as $f^H =f_H\circ\sigma_H.$

(iii) Let $f:\Om \rightarrow \R$ be a measurable function. If $ f_H=f$ a.e. in $\Om$, then $f$ is said to be \textit{polarization invariant} with respect to $H$. Similarly, if $f^H=f$ a.e. in $\Om$, then $f$ is said to be \textit{ dual-polarization invariant} with respect to $H$.
\end{definition}

Now we state our next result.
\begin{theorem}\label{min_thm}
Let $p=2$ and $H\subset \R^N$ be an open  affine half-space such that $0\in \overline{H}$. Let $\Om$ be a smooth, bounded domain in $\R^N$ such that $\Om=\Om_H$. Let $g_0, V_0$ satisfy the assumption as given in \eqref{A}. In addition, we assume that $g_0, V_0 \ge 0$. Let $(\uphi,\ug,\uv)$ be an optimal triple as given by Theorem \ref{optimization}-$(i)$. Then the following holds:
\begin{enumerate}
    \item [$(i)$]
   if $\sigma_H(\Om)\neq\Om$ and $V_0=0$, then $\uphi,\; \ug$ are polarization invariant with respect to $H$,
   \item [$(ii)$]
    if $\sigma_H(\Om)=\Om$, then $\uphi,\ug,\uv$ are either polarization invariant or else dual-polarization invariant with respect to $H$.
    \end{enumerate}
\end{theorem}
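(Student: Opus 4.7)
The plan is to use polarization, viewed both as a distribution-preserving rearrangement and as a Schwarz-type symmetrization that leaves the Dirichlet integral of $\w2$ invariant, and then to read polarization invariance off the equality case in the resulting Rayleigh-quotient chain.

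Since $\La(\ug,\uv)$ is a simple principal eigenvalue by Theorem \ref{Existence1}, I first normalize $\uphi>0$ in $\Om$. Because $\Om=\Om_H$, the polarization $\uphi_H$ belongs to $\w2$, while the polarized rearrangement $\ug_H$ and the dual polarization $\uv^H$ share the distribution of $\ug$ and $\uv$, hence lie in $\E(g_0)$ and $\E(V_0)$ respectively. I then exploit three classical properties of polarization available in the case $p=2$: the Brock--Solynin identity $\int_{\Om}|\Gr\uphi_H|^{2}\,\dx=\int_{\Om}|\Gr\uphi|^{2}\,\dx$; the Hardy--Littlewood polarization inequality $\int_{\Om}\ug\,\uphi^{2}\,\dx \leq \int_{\Om}\ug_H\,\uphi_H^{2}\,\dx$ (via $(\uphi^{2})_H=\uphi_H^{2}$ for nonnegative $\uphi$); and its dual version $\int_{\Om}\uv\,\uphi^{2}\,\dx \geq \int_{\Om}\uv^H\,\uphi_H^{2}\,\dx$. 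Testing the Rayleigh quotient for $(\ug_H,\uv^H)$ against $\uphi_H$ and chaining these estimates gives
\[
\La(\ug_H,\uv^H) \,\leq\, \frac{\int_{\Om}|\Gr\uphi_H|^{2}+\int_{\Om}\uv^H\,\uphi_H^{2}}{\int_{\Om}\ug_H\,\uphi_H^{2}} \,\leq\, \La_{\min}(g_0,V_0),
\]
so equality must hold throughout by optimality of $\La_{\min}$. In particular, $\uphi_H$ is a first eigenfunction for the optimal pair $(\ug_H,\uv^H)$, and equality is attained in both Hardy--Littlewood inequalities.

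To finish, I set $w:=\uphi_H-\uphi$ and observe $w\geq 0$ on $\Om\cap H$, with $w=0$ on $\Om\cap\partial H$ (since $\sigma_H$ fixes $\partial H$) and on $\partial\Om\cap\overline H$. Subtracting the two eigenvalue equations satisfied by $\uphi$ and $\uphi_H$, and using the pointwise inequalities $\ug_H\geq \ug$ and $\uv^H\leq \uv$ on $\Om\cap H$ together with $\uphi>0$, one obtains a linear elliptic inequality of the form $-\Delta w-c(x)\,w\geq 0$ on $\Om\cap H$, with nonnegative forcing $[\La_{\min}(\ug_H-\ug)+(\uv-\uv^H)]\uphi$. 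The strong maximum principle together with Hopf's lemma on the flat portion $\Om\cap\partial H$ then rules out $w\not\equiv 0$, so $\uphi_H\equiv\uphi$ on $\Om$, and the equality cases of the Hardy--Littlewood inequalities then yield $\ug_H=\ug$ and $\uv^H=\uv$. In case (i), where $V_0=0$ and $\sigma_H(\Om)\neq\Om$, only the first Hardy--Littlewood inequality is involved and the argument is confined to $\Om\cap\sigma_H(\Om)$, while on $\Om\setminus\sigma_H(\Om)$ polarization invariance is automatic since the reflected function vanishes there. In case (ii), rerunning the entire scheme with the dual-polarized competitor triple $(\uphi^H,\ug^H,\uv_H)$ supplies the alternative branch of the dichotomy.

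The main obstacle will be the final Hopf-type step, because $\ug_H,\uv^H$ lie only in $L^{N/2}(\Om)$ and may be unbounded, so the standard boundary-point regularity theory does not immediately apply at $\Om\cap\partial H$; I expect to recover the needed $C^1$ control of $\uphi_H$ from the Guedda--V\'eron regularity and the Damascelli--Pardo gradient estimate already invoked in Theorem \ref{optimization}.
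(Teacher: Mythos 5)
Your overall architecture—Rayleigh-quotient chain via Hardy--Littlewood and reverse Hardy--Littlewood, invariance of the Dirichlet integral under polarization, and then the comparison function $w=\uphi_H-\uphi$ with the Brezis--Ponce strong maximum principle—matches the paper's. The step that goes wrong is the final one, where you invoke Hopf's lemma on $\Om\cap\partial H$ to ``rule out $w\not\equiv 0$''. This is both unnecessary in case~(i) and actually false in case~(ii).

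In case~(ii) the theorem deliberately leaves a dichotomy: either $\uphi_H=\uphi$ \emph{or} $\uphi^H=\uphi$, and the second branch is genuinely realized (Remark~\ref{non_rad} exhibits a concentric annulus where a minimizer is not radial, so for some $H$ with $\sigma_H(\Om)=\Om$ one must have $w>0$ throughout $\Om\cap H$). No boundary-point lemma can exclude $w>0$ here; if it did, you could run the same argument for the competitor $(\uphi^H,\ug^H,\uv_H)$ and conclude $\uphi$ is symmetric under $\sigma_H$ for every such $H$, contradicting the known symmetry breaking. What the paper does instead in case~(ii) is accept the alternative $w>0$ and, in that branch, use Proposition~\ref{characterization} ($\ug=F\circ\uphi$ with $F$ increasing, $\uv=G\circ\uphi$ with $G$ decreasing) to convert $\uphi\circ\sigma_H>\uphi$ on $\Om\cap H$ into $\ug\circ\sigma_H>\ug$ and $\uv\circ\sigma_H<\uv$, which unpacks to $\uphi^H=\uphi$, $\ug^H=\ug$, $\uv_H=\uv$. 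Your proposal omits this monotone-rearrangement link entirely, and it is exactly what makes the dual branch work.

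In case~(i) the key step you are missing is the geometric one supplied by $\sigma_H(\Om)\neq\Om$: Proposition~\ref{fact_pola_domain}-(iii) gives a set $A\subset\Om\cap H$ with $|A|>0$ and $\sigma_H(A)\subset\Om^c\cap\overline H^c$, so for $x\in A$ one has $\tilde\uphi(\sigma_H(x))=0$ and hence $\uphi_H(x)=\uphi(x)$, i.e.\ $w=0$ on a set of positive measure. Combined with the strong-maximum-principle dichotomy ($w>0$ everywhere in $\Om\cap H$ or $w\equiv 0$), this forces $w\equiv 0$, with no appeal to Hopf. Finally, a smaller point: once $\uphi_H=\uphi$, the cleanest way to get $\ug_H=\ug$ is not the equality case of Hardy--Littlewood (which by itself does not immediately give pointwise identity) but subtracting the two weak eigenvalue equations, $0=-\De w=\La(\ug)(\ug_H-\ug)\uphi$ in $\Om\cap H$, and using $\uphi>0$.
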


Let us now briefly describe the technique of our proof. As seen in  \cite{Anedda, Chanillo, Cuccu}, the techniques for proving the Schwarz and Steiner symmetry of the minimizers mainly  rely on the Hardy-Littlewood inequality and the characterizations for the equality case in P\'{o}lya-Szeg\"{o} inequality, namely, $(i)$ Brothers-Ziemer's characterization \cite[for Schwarz symmetrization]{Brothers}, $(ii)$ the counterpart of Brothers-Ziemer's characterization due to Cianchi-Fusco \cite[for Steiner symmetrization]{Cianchi}. Indeed, an analogue of the Hardy-Littlewood inequality for polarization plays a vital role in our proof as well. However, since the gradient norm of a function remains unchanged under polarization (Proposition \ref{pola_bounded}), equality occurs in the P\'{o}lya-Szeg\"{o} type inequality. Thus the analogue of Brothers-Ziemer type characterization is no more valid in the case of polarization. We bypass this deficiency by using a version of strong maximum principle (Proposition \ref{STM}) and  compare $\uphi$ and $\uphi_H$ on $\Om\cap H$. This indeed helps us to prove the above theorem.

As we mentioned earlier, for $g_0$ taking a finite number of nonnegative values, Anedda-Cuccu studied the Steiner symmetry of minimizers \cite[Remark 3.1]{Anedda}. This particular choice of $g_0$ allowed them to use the result by Cianchi-Fusco \cite [Theorem 2.6]{Cianchi} in their proof. In this article, as an application of Theorem \ref{min_thm}, we extend Cuccu-Anedda's result for a more general class of weight functions $g_0$.

\begin{corollary}[Steiner symmetry]\label{Steiner_theo}
Let $p,g_0,H$ be as given in Theorem \ref{min_thm} and $V_0=0$. Assume that $\Om$ is a Steiner symmetric domain with respect to the hyperplane $\pa H$. Then an optimal pair $(\uphi,\ug)$ is Steiner symmetric with respect to $\pa H$ in $\Om$. In particular, we have $\uphi=\uphi\circ \sigma_H$ and $\ug=\ug\circ \sigma_H$ a.e. in $\Om$.
\end{corollary}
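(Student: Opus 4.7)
The plan is to obtain Steiner symmetry of $(\underline{\phi},\underline{g})$ by invoking Theorem \ref{min_thm}(i) for a whole one-parameter family of half-spaces whose boundaries are parallel to $\partial H$, and then chaining the resulting polarization invariances into a symmetry-plus-monotonicity statement along every line perpendicular to $\partial H$.

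Set coordinates so that $\partial H=\{x_1=0\}$ and $H=\{x_1>0\}$. For each $c>0$ put $\tilde H_c=\{x_1<c\}$, and for each $c<0$ put $\tilde H'_c=\{x_1>c\}$. Steiner symmetry of $\Omega$ (i.e., $\sigma_H$-invariance together with $x_1$-convexity) implies, by a fiber-wise check on every line perpendicular to $\partial H$, that $\Omega_{\tilde H_c}=\Omega=\Omega_{\tilde H'_c}$; at the same time $\sigma_{\tilde H_c}(\Omega)\neq\Omega$ and $\sigma_{\tilde H'_c}(\Omega)\neq\Omega$ for $c\neq 0$, since the Steiner-symmetry plane is unique among the planes parallel to $\partial H$. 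After a harmless translation of the origin into $\overline{\tilde H_c}$ or $\overline{\tilde H'_c}$ (the eigenvalue problem and the theorem being translation invariant), Theorem \ref{min_thm}(i) applies, and yields that $\underline{\phi}$ and $\underline{g}$ are polarization invariant with respect to $\tilde H_c$ for every $c>0$ and with respect to $\tilde H'_c$ for every $c<0$.

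Restrict to an arbitrary line $L$ perpendicular to $\partial H$ and denote the $x_1$-coordinate by $t$. The invariance with respect to $\tilde H_c$ reads $\underline{\phi}(t)\ge\underline{\phi}(2c-t)$ for a.e.\ $t<c$; by the continuity of $\underline{\phi}$ (standard elliptic regularity for a first eigenfunction of $-\Delta$) this in fact holds for every $t<c$, and unioning over $c>0$ yields $\underline{\phi}(t_1)\ge\underline{\phi}(t_2)$ whenever $t_1<t_2$ and $t_1+t_2>0$. The complementary family $\tilde H'_c$ with $c<0$ yields the reverse inequality whenever $t_1+t_2<0$. Letting $t_1+t_2\to 0^{\pm}$ then forces $\underline{\phi}(t)=\underline{\phi}(-t)$, i.e.\ $\underline{\phi}\circ\sigma_H=\underline{\phi}$; together with the monotone decrease away from $t=0$ this is Steiner symmetry of $\underline{\phi}$ with respect to $\partial H$. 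The same argument, now indexed by a countable dense family of parameters $c$ and intersecting the associated null sets to compensate for the absence of continuity of $\underline{g}$, delivers $\underline{g}=\underline{g}\circ\sigma_H$ a.e.\ in $\Omega$ and its Steiner-monotonicity along each perpendicular line.

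The main technical point is the geometric identity $\Omega_{\tilde H_c}=\Omega$ (and its analogue for $\tilde H'_c$) for the entire family of parallel half-spaces, which uses both the $\sigma_H$-symmetry and the $x_1$-convexity of $\Omega$ packaged into Steiner symmetry; without either one the identity fails, and the parallel polarizations become inaccessible via Theorem \ref{min_thm}(i). A minor care point is the passage from a.e.\ polarization invariance of the non-continuous weight $\underline{g}$ to an a.e.\ symmetry statement, addressed by restricting to a countable dense set of parameters $c$ and intersecting null sets.
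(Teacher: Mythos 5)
Your overall structure is the same as the paper's: you translate coordinates so $\partial H$ is a coordinate hyperplane, observe that Steiner symmetry of $\Omega$ gives $\Omega_{\widetilde H}=\Omega$ for every half-space $\widetilde H$ whose boundary is parallel to $\partial H$ and contains it (this is Proposition \ref{Steiner_char}-$(i)$, which you re-derive fiber-wise), note that $\sigma_{\widetilde H}(\Omega)\neq\Omega$ for $\widetilde H\neq H$ because a bounded set cannot be reflection-symmetric about two parallel hyperplanes, and invoke Theorem \ref{min_thm}-$(i)$ to get $\underline{\phi}_{\widetilde H}=\underline{\phi}$ and $\underline{g}_{\widetilde H}=\underline{g}$ for all such $\widetilde H$. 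Up to here you agree with the paper. (Incidentally, the translation into $\overline{\widetilde H_c}$ you mention is unnecessary: once $\partial H$ passes through $0$, every $\widetilde H_c$ with $c>0$ and $\widetilde H'_c$ with $c<0$ already contains the origin, so the $H\in\H_0$ hypothesis of Theorem \ref{min_thm} holds automatically.)

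The gap is in the last step, where you replace the paper's appeal to Proposition \ref{Steiner_char}-$(ii)$ (Brock--Solynin's characterization of Steiner symmetry via polarization, valid for measurable functions) with a hand-built chaining of fiber-wise pointwise inequalities. Your derivation of $\underline{\phi}=\underline{\phi}\circ\sigma_H$ explicitly passes to a limit $t_1+t_2\to 0^{\pm}$, which requires continuity of $\underline{\phi}$. But under the hypotheses of Theorem \ref{min_thm}, $g_0$ is only assumed to lie in $L^{N/2}(\Omega)$ when $N>2$ (assumption \eqref{A} with $p=2$), and Proposition \ref{regularity}-$(a)$ gives $C^1$ regularity only for exponents \emph{strictly} greater than $N/p$. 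So ``standard elliptic regularity'' does not deliver the continuity you invoke, and the limit step is not justified. For $\underline{g}$ the problem is worse: restricting to a countable dense set of parameters $c$ and removing a union of null sets only gives you the comparison $\underline{g}(x)\ge\underline{g}(\sigma_{\widetilde H_c}(x))$ when $x_1+\sigma_{\widetilde H_c}(x)_1$ lies in the corresponding countable set of values $2c$; this does not close up to the statement $\underline{g}=\underline{g}\circ\sigma_H$ a.e.\ without the measure-theoretic machinery that goes into Brock--Solynin's proof. The paper avoids both issues by citing Proposition \ref{Steiner_char}-$(ii)$ directly, which is formulated for measurable functions and requires no pointwise limit. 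The cleanest fix for your argument is simply to do the same: once you have $\underline{\phi}_{\widetilde H}=\underline{\phi}$ and $\underline{g}_{\widetilde H}=\underline{g}$ for all $\widetilde H\in\H_*$, invoke that characterization rather than re-deriving it.
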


 We observe that the concentric annulus is polarization invariant with respect to any open half-space containing the origin on the boundary. On the other hand, the non-concentric annulus is polarization invariant with respect to any open half-space which contains the origin on the boundary and does not contain the center of the inner ball. This kind of geometry motivates us to study certain partial symmetry of  $(\uphi,\ug,\uv)$ on the annular region. Indeed, in the following theorem, we show that $(\uphi,\ug, \uv)$ is foliated Schwarz symmetric in annular domains. 

\begin{theorem} \label{Foliation_minimization}
Let $\Om_{R,r}=B_R(0)\setminus \overline{B_r(te_1)},\;0\leq t<R-r$ and $p,g_0,V_0$ be as in Theorem \ref{min_thm}. Let $(\uphi,\ug,\uv)$ be an optimal triple. Then the following holds:
\begin{enumerate}
    \item [$(i)$ \rm{(\textbf{Concentric case})}] if $t=0$, then there exists $\ga\in \S^{N-1}$ such that $\uphi,\ug$ are foliated Schwarz symmetric in $\Om_{R,r}$ with respect to $\ga$ and $\uv$ is foliated Schwarz symmetric in $\Om_{R,r}$ with respect to $-\ga$,
    \item [$(ii)$ \rm{(\textbf{Non-concentric case})}] if $t>0$ and $V_0=0$, then $\uphi$ and $\ug$ are foliated Schwarz symmetric in $\Om_{R,r}$ with respect to $-e_1$.
\end{enumerate}
\end{theorem}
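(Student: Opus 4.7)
The plan is to invoke Theorem~\ref{min_thm} for a suitable family of open affine half-spaces $H_\nu := \{x \in \RN : x \cdot \nu > 0\}$ through the origin, indexed by $\nu \in \S^{N-1}$, and to upgrade the resulting (dual-)polarization invariances into foliated Schwarz symmetry via the well-known characterization of Brock and Smets--Willem: on a domain that is rotationally symmetric about the line through $\gamma$, a function $u$ is foliated Schwarz symmetric with respect to $\gamma$ if and only if $u_{H_\nu} = u$ for every $\nu$ with $\nu \cdot \gamma > 0$. Throughout I would use the identity $u^{H_\nu} = u_{H_{-\nu}}$, which is immediate from $\sigma_{H_\nu} = \sigma_{H_{-\nu}}$ and the definition of polarization; thus dual-polarization invariance with respect to $H_\nu$ is the same as polarization invariance with respect to $H_{-\nu}$.

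For the non-concentric case (ii), I would restrict attention to the open set $\{\nu \in \S^{N-1} : \nu \cdot e_1 < 0\}$. For such $\nu$ the centre $te_1$ of the inner hole lies in the interior of $H_\nu^c$, so $\sigma_{H_\nu}(\Om_{R,r}) \neq \Om_{R,r}$. A short computation gives
\[
    |x - \sigma_{H_\nu}(te_1)|^2 = |x - te_1|^2 + 4(te_1\cdot\nu)(x\cdot\nu),
\]
and since $te_1\cdot\nu < 0$ and $x\cdot\nu > 0$ on $H_\nu$, the cap $\overline{B_r(te_1)} \cap H_\nu$ is contained in $B_r(\sigma_{H_\nu}(te_1))$, which translates into $(\Om_{R,r})_{H_\nu} = \Om_{R,r}$. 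Since $V_0 = 0$, Theorem~\ref{min_thm}(i) then yields $\uphi_{H_\nu} = \uphi$ and $\ug_{H_\nu} = \ug$ for every such $\nu$. Because $(-e_1) \cdot \nu = -\nu\cdot e_1 > 0$ on the same set, the characterization above produces the foliated Schwarz symmetry of $\uphi$ and $\ug$ with respect to $-e_1$.

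For the concentric case (i), every $\sigma_{H_\nu}$ preserves $\Om_{R,0}$, so Theorem~\ref{min_thm}(ii) furnishes a joint dichotomy: for each $\nu \in \S^{N-1}$, either $\uphi_{H_\nu} = \uphi$, $\ug_{H_\nu} = \ug$ and $\uv^{H_\nu} = \uv$, or else the same three relations hold with $H_\nu$ replaced by $H_{-\nu}$. In particular the set $\Sigma := \{\nu \in \S^{N-1} : \uphi_{H_\nu} = \uphi\}$ meets every antipodal pair. A consistency argument in the spirit of Smets--Willem---exploiting continuity of $\nu \mapsto \uphi_{H_\nu}$, the fact that polarization preserves the Dirichlet energy, and closure of $\Sigma$ under compositions of its own polarizations---upgrades $\Sigma$ to a closed hemisphere $\{\nu : \nu \cdot \gamma \ge 0\}$ for some $\gamma \in \S^{N-1}$, giving foliated Schwarz symmetry of $\uphi$ about $\gamma$. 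The joint form of the dichotomy then forces the same $\gamma$ for $\ug$ and the opposite axis $-\gamma$ for $\uv$.

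The principal obstacle is the structural step in (i), namely converting a pointwise either/or into a single global hemisphere of polarization invariance; this is where the Smets--Willem type argument carries the weight, rather than a direct use of Theorem~\ref{min_thm}. The geometric verifications $(\Om_{R,r})_{H_\nu} = \Om_{R,r}$ and $\sigma_{H_\nu}(\Om_{R,r}) \neq \Om_{R,r}$ in case (ii), while elementary, are what single out the axis $-e_1$ and pin down the correct family of half-spaces.
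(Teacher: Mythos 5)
Your proof is correct and follows essentially the same route as the paper: identify the family of half-spaces $H_\nu$ for which Theorem~\ref{min_thm} applies (all of $\widehat{\H}_0$ in the concentric case, $\widehat{\H}_0(-e_1)$ in the non-concentric case), then convert the resulting (dual-)polarization invariances into foliated Schwarz symmetry via Proposition~\ref{Foliated_char}, using Proposition~\ref{characterization} to transfer the conclusion from $\uphi$ to $\ug$ and $\uv$. The only differences are cosmetic: you sketch a re-derivation of the Brock/Smets--Willem step (exactly the content of Proposition~\ref{Foliated_char}-$(i)$, which the paper cites as a black box), and you make explicit the elementary geometric check $(\Om_{R,r})_{H_\nu}=\Om_{R,r}$ that the paper labels as easy to observe.
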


As a by-product of Theorem \ref{min_thm} and Theorem \ref{Foliation_minimization}, we prove that maxima of the first eigenfunction of \eqref{EVP} associated to a minimizer of \eqref{OurProb_min} on nonconcentric annulus  will lie on a segment of the negative $x_1$-axis.
\begin{corollary} \label{Maxima_location}
Let $p=2$ and $\Om=\Om_{R,r}=B_R(0)\setminus \overline{B_r(te_1)}$, where $0< t<R-r$. Assume that $g_0\in L^q(\Om)$, where $q>\frac{N}{2}$, is nonnegative and $V_0=0$. Let $(\uphi,\ug)$ be an optimal pair. Define $$L_{\Om}=\bigg\{x=\big(x_1,x_2,\dots,x_N\big)\in \Om\cap (-\R^+ e_1): x_1\geq -\frac{R+r-t}{2}\bigg\},$$
where $\R^+$ is the set of nonnegative real numbers. Then $\max\limits_{x\in \Om}\uphi(x)=\max\limits_{x\in L_{\Om}} \uphi(x)$. In addition, if $\ug$ is continuous, then $\max\limits_{x\in \Om}\ug(x)=\max\limits_{x\in L_{\Om}} \ug(x)$.
\end{corollary}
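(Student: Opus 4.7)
The plan is to combine foliated Schwarz symmetry (from Theorem~\ref{Foliation_minimization}(ii)) with a single polarization invariance (from Theorem~\ref{min_thm}(i)). By Theorem~\ref{Foliation_minimization}(ii), both $\uphi$ and $\ug$ are foliated Schwarz symmetric with respect to $-e_1$, so on every sphere $\partial B_\rho(0)$ meeting $\Omega$ the maximum of $\uphi$ is attained at $-\rho e_1$ whenever $-\rho e_1 \in \Omega$. Hence $\max_\Omega \uphi = \sup\{\uphi(-\rho e_1) : -\rho e_1 \in \Omega\}$, and the task reduces to showing this supremum is attained at some $\rho$ with $-\rho e_1 \in L_\Omega$, i.e., $\rho \leq M := (R+r-t)/2$.

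Next I would introduce the open half-space $H = \{x \in \R^N : x_1 > -M\}$ and verify $\Omega_H = \Omega$. The key identity is $t + 2M = R + r$: the reflected ball $\sigma_H(\overline{B_R(0)}) = \overline{B_R(-2Me_1)}$ is externally tangent to $\overline{B_r(te_1)}$; combined with $t - r > -M$ (so the inner ball lies entirely inside $H$), this yields $\sigma_H(\Omega \cap H^c) \subset \Omega$. Since additionally $\sigma_H(\Omega) \neq \Omega$, $0 \in H$, and $V_0 = 0$, Theorem~\ref{min_thm}(i) applies and gives polarization invariance of $\uphi$ and $\ug$ with respect to $H$; in particular $\uphi(x) \leq \uphi(\sigma_H(x))$ for all $x \in \Omega \cap H^c$.

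For $\rho \in (M, R)$ with $-\rho e_1 \in \Omega$, polarization gives $\uphi(-\rho e_1) \leq \uphi((\rho - 2M) e_1)$. If $\rho \leq 2M$ (always the case when $t \leq r$, since then $2M \geq R$), the reflected point has $x_1 = \rho - 2M \in (-M, 0]$ and already lies in $L_\Omega$. If $\rho > 2M$ (possible only when $t > r$), the reflected point sits on the positive $x_1$-axis; one further application of foliated Schwarz symmetry sends it to $-(\rho - 2M) e_1 \in \Omega \cap (-\R^+ e_1)$, effectively replacing $\rho$ by $\rho - 2M < \rho$. Iterating this two-step reduction $s \mapsto |s - 2M|$ strictly decreases the parameter and terminates in finitely many rounds at some $s \in [0, M]$, giving $\uphi(-\rho e_1) \leq \max_{L_\Omega} \uphi$. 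The identical reasoning handles $\ug$; the continuity hypothesis is used only to ensure the pointwise maximum is meaningful.

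I expect the main obstacle to be the verification that $\Omega_H = \Omega$ for exactly this half-space, which rests on the tangency identity $t + 2M = R + r$ (indeed $M$ is the largest offset for which $\Omega$ is polarization invariant under $H$). A secondary subtlety is the iteration in the non-convex case $t > r$, where a single polarization step does not immediately drop the reflected point into $L_\Omega$ and must be alternated with foliated Schwarz symmetry.
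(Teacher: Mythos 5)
Your proposal is correct and follows the same route as the paper: establish $\Omega_H = \Omega$ for $H = \{x_1 > -M\}$ with $M = (R+r-t)/2$ via the tangency identity, apply Theorem~\ref{min_thm}(i) to get polarization invariance of $\uphi$ and $\ug$, and combine with the foliated Schwarz symmetry of Theorem~\ref{Foliation_minimization}(ii) to push the maximum onto the negative $x_1$-axis and then into $L_\Omega$. In fact you are slightly more careful than the paper on the last step: the paper deduces the conclusion directly from its inequalities \eqref{mono_2}--\eqref{mono_3}, which as stated only handle reflections landing back on the non-positive $x_1$-axis (i.e.\ $\rho \le 2M$, automatic when $t \le r$ so that $2M \ge R$); when $t > r$ there is a range $\rho \in [2M,R)$ where $\sigma_H(-\rho e_1)$ lands on the positive axis and one must, exactly as you describe, interleave another foliated-Schwarz step and iterate $s\mapsto |s-2M|$ until reaching $[0,M]$. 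Your explicit iteration argument fills in this point, which the paper leaves implicit.
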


The remainder of the article is organized as follows.  In Section \ref{pola_section}, we briefly discuss polarization and prove certain related results that are essential for the development of this article. In Section \ref{symmetriz_section}, we recall three different types of symmetrizations and their characterizations in terms of polarization. Proof of the existence result (Theorem \ref{optimization}) is given in Section \ref{existence_section}. In Section \ref{symmetry_section}, we study the symmetry results. This section contains the proof of Theorem \ref{min_thm}-\ref{Foliation_minimization} and Corollary \ref{Steiner_theo}-\ref{Maxima_location}. The existence of the first eigenvalue of \eqref{EVP} is derived in Appendix.

\section{Preliminaries}
\subsection{Polarizations}\label{pola_section}
Let $\H$ be the collection of all open affine half-spaces in $\R^N$, and $\H_0\subset\H$ denotes the set of all $H\in\H$ such that $0\in \overline{H}$. For $\beta \in \R^N$, we set
\begin{align*}
    \widehat{\H}_0:=\{H\in \H_0: 0\in \pa H\}, \; \, \H(\beta):=\{H\in \H:\beta\in H\}, \; \, \widehat{\H}_0(\beta):=\{H\in\widehat{\H}_0 :\beta\in H\}.
\end{align*}

In the next proposition, we prove some results which will be used in subsequent sections. 
\begin{proposition}\label{fact_pola_domain}
Let $H\in \H$ and $\Om$ be a domain in $\R^N$ such that $\Om=\Om_H$. Then 
\begin{enumerate}[(i)]
    \item $\sigma_H( \Om^c\cap H) \subset \Om^c\cap H^c$.
    \item $\sigma_H(\Om \cap \overline{H}^c) \subset \Om \cap H$.
    \item if $\sigma_H(\Om)\neq \Om$, then there exists $A\subset \Om\cap H$ such that $|A|>0$ and $\sigma_H(A)\subset \Om^c\cap  \overline{H}^c$.
    \item Let $f:\Om \rightarrow \R^+$ be a measurable function. Let $\tilde{f}_H$ be the polarization of $\tilde{f}$ as given in Definition \ref{pola_def}-(ii). Then $\tilde{f}_H=0$ a.e. in $\Om^c$.
\end{enumerate}

\end{proposition}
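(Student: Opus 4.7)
The proof is essentially an unpacking of the definition
$\Om_H = ((\Om \cup \sigma_H(\Om)) \cap H) \cup (\Om \cap \sigma_H(\Om))$
combined with some measure-theoretic bookkeeping. Writing $\Om^+ := \Om \cap H$ and $\Om^- := \Om \cap \overline{H}^c$, the key observation is that the two pieces of $\Om_H$ lying outside $H$ come entirely from the intersection $\Om \cap \sigma_H(\Om)$; this is the lever I will use for (i) and (ii).

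For (ii), I would take $x \in \Om^-$: since $x \in \Om_H$ and $x \notin H$, necessarily $x \in \Om \cap \sigma_H(\Om)$, which forces $\sigma_H(x) \in \Om$, and $\sigma_H(x) \in H$ follows from $x \in \overline{H}^c$. This gives $\sigma_H(\Om^-) \subset \Om^+$. Part (i) then follows by contradiction: if $x \in \Om^c \cap H$ and $\sigma_H(x) \in \Om$, then $\sigma_H(x) \in \Om^-$, and applying (ii) yields $x = \sigma_H(\sigma_H(x)) \in \Om^+$, contradicting $x \in \Om^c$. The inclusion $\sigma_H(x)\in H^c$ is automatic.

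For (iii), my candidate is $A := \Om^+ \setminus \sigma_H(\Om^-) \subset \Om \cap H$. Any $x \in A$ satisfies $\sigma_H(x) \notin \Om^-$ and $\sigma_H(x) \in \overline{H}^c$, which forces $\sigma_H(x) \in \Om^c \cap \overline{H}^c$; so the set-theoretic statement is automatic and the only real task is the lower bound $|A| > 0$. Since $\sigma_H$ preserves Lebesgue measure and $\sigma_H(\Om^-) \subset \Om^+$, I have $|A| = |\Om^+| - |\Om^-|$. Splitting $\Om \triangle \sigma_H(\Om)$ over $H$ and $\overline{H}^c$ and using the two containments $\sigma_H(\Om^-)\subset\Om^+$ and $\Om^- \subset \sigma_H(\Om^+)$, a direct computation gives
\begin{equation*}
|\Om \triangle \sigma_H(\Om)| = 2\bigl(|\Om^+| - |\Om^-|\bigr) = 2|A|.
\end{equation*}
The main obstacle is then to turn $\sigma_H(\Om) \neq \Om$ into $|\Om \triangle \sigma_H(\Om)| > 0$: since $\Om$ is a smooth bounded domain and $\sigma_H(\Om)$ is also open, any point in the set-theoretic symmetric difference admits a small open ball that remains in the symmetric difference (using smoothness of $\pa\Om$ to rule out pathological isolated punctures), giving positive Lebesgue measure.

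For (iv), fix $x \in \Om^c$, so $\tilde f(x) = 0$. If $x \in \Om^c \cap H$, part (i) yields $\sigma_H(x) \in \Om^c$, hence $\tilde f(\sigma_H(x)) = 0$ and $\tilde f_H(x) = \max\{0,0\} = 0$. If $x \in \Om^c \cap \overline{H}^c$, nonnegativity of $f$ gives $\tilde f(\sigma_H(x)) \geq 0$, so $\tilde f_H(x) = \min\{0,\tilde f(\sigma_H(x))\} = 0$. Since $\Om^c \cap \pa H$ has Lebesgue measure zero, this yields $\tilde f_H = 0$ a.e.\ in $\Om^c$.
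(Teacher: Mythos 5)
Your proofs of (i), (ii), and (iv) match the paper's up to order of presentation: you prove (ii) directly by unpacking the definition of $\Om_H$ and derive (i) from it by contradiction, whereas the paper proves (i) directly and remarks (ii) is analogous; for (iv) you spell out that the minimum is exactly $0$ by nonnegativity, where the paper writes $\le 0$ and leaves the conclusion implicit. The genuine divergence is in (iii). Both you and the paper take the same candidate set $A=(\Om\cap H)\setminus\sigma_H(\Om\cap\overline{H}^c)$ and the set-theoretic inclusion is immediate, but the routes to $|A|>0$ differ: the paper argues topologically, assuming $\mathrm{int}(A)=\emptyset$ and claiming a point of $A$ admits a ball escaping $\overline{\sigma_H(\Om\cap\overline{H}^c)}$, while you derive the clean measure identity $|\Om\triangle\sigma_H(\Om)|=2|A|$ from measure-preservation of $\sigma_H$ together with the inclusions from (ii). Your route isolates more transparently the one remaining nontrivial step — passing from $\sigma_H(\Om)\neq\Om$ to $|\Om\triangle\sigma_H(\Om)|>0$ — and you correctly flag that this needs regularity of $\pa\Om$. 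That caution is warranted: for a bare domain the conclusion can fail, e.g.\ $\Om=\R^N\setminus\{p\}$ with $p\in\overline{H}^c$ satisfies $\Om=\Om_H$ and $\sigma_H(\Om)\neq\Om$ yet $|\Om\triangle\sigma_H(\Om)|=0$, and in this example the paper's topological step (finding a ball around a point of $A$ meeting the complement of the closure) is also false. So your measure-theoretic argument is a genuine and arguably more honest improvement: it yields the same conclusion with a sharper accounting of where smoothness of $\Om$ is actually used, an assumption the paper relies on silently.
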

\begin{proof}

$(i)$ Let $x\in \Om^c\cap H$. Then $\sigma_H(x)\in H^c$. We claim that $\sigma_H(x)\in \Om^c$. On the contrary, suppose $\sigma_H(x)\in \Om$. Let $y=\sigma_H(x)$. Then $\sigma_H(y)\in \sigma_H(\Om)$. Thus $\sigma_H(y)\in \sigma_H(\Om)\cap H\subset \Om_H$. Since $\Om=\Om_H$, we have $\sigma_H(y)\in \Om$. Therefore $x(=\sigma_H(y))\in \Om$, which is a contradiction as $x\in \Om^c$.

$(ii)$ Proof follows using a similar set of arguments as given above.

$(iii)$ From $(ii)$,  we have $\sigma_H(\Om \cap \overline{H}^c) \subset \Om \cap H$. Since $\sigma_H(\Om)\neq \Om$, we get $\sigma_H(\Om \cap \overline{H}^c) \subsetneq \Om \cap H$. Therefore, the set $A:=(\Om \cap H)\setminus \sigma_H(\Om \cap \overline{H}^c)$ is nonempty. Then $\sigma_H(A)\subset \Om^c\cap \overline{H}^c$. 
Now it is enough to show that int$(A)$ is nonempty. Suppose int$(A)=\emptyset$. Then for every $x\in A$, there exists $r_x>0$ such that  $B_r(x)\cap \sigma_H(\Om \cap \overline{H}^c)\neq \emptyset, \; \forall \, r\in (0,r_x)$. This implies that $A\subset \pa(\sigma_H(\Om \cap \overline{H}^c))$ and hence
\begin{align}\label{int}
    \sigma_H(\Om \cap \overline{H}^c) \subsetneq \Om \cap H\subset \overline{\sigma_H(\Om \cap \overline{H}^c)}.
\end{align}
On the other hand, $A \subset \pa(\sigma_H(\Om \cap \overline{H}^c)) \cap (\Om\cap H)$ and $\Om\cap H$ is open. Hence for $y\in A$, there exists $r>0$ such that $B_r(y)\subset \Om\cap H$ and $B_r(y)\cap \left(\overline{\sigma_H(\Om \cap \overline{H}^c)}\right)^c\neq \emptyset$, a contradiction to \eqref{int}. Thus, int$(A)$ must be nonempty.

$(iv)$ Let $x\in  \Om^c\cap H$. Since $\Om=\Om_H$, using Proposition \ref{fact_pola_domain}-$(i)$, $\sigma_H(x)\in \Om^c\cap H^c$ and  $\tilde{f}_H(x)= \max \{\tilde{f}(x), \tilde{f}(\sigma_H(x))\} = 0.$ If $x \in \Om^c\cap H^c$, then $\tilde{f}_H(x)=\min \{ \f(x), \f(\sigma_H(x)) \} \leq 0$. Thus, $\tilde{f}_H=0$ a.e. in $\Om^c$.
\end{proof}

In the next proposition, we prove that the polarization of a measurable function defined on $\Om$ is a rearrangement of that function. For $\Om=\R^N$, this result is well known as polarization is a two-point rearrangement (see \cite[Section 5]{Brock2000}). For $\Om\subsetneq\R^N$, we give a proof using Definition \ref{pola_def}-(ii). We also state some results related to the invariance of norms under polarization.
\begin{proposition}\label{pola_bounded}
Let $H\in \H$ and let $\Om\subsetneq\R^N$ be a domain such that $\Om=\Om_H$. Let $f:\Om \rightarrow \R^+$ be a measurable function, and its polarization $f_H$ be as given in Definition \ref{pola_def}-(ii). Then the following holds:
\begin{enumerate}[(i)]
    \item $f_H$ is a rearrangement of $f$,

    \item  If $f\in L^p(\Om)$ for some $p\in [1,\infty)$, then $f_H\in L^p(\Om)$ with $\norm{f}_p=\norm{f_H}_p$. Furthermore, if $f\in \wp$, then $f_H\in \wp$ with $\norm{\nabla f}_p=\norm{\nabla f_H}_p$.
\end{enumerate}
\end{proposition}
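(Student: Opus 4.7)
My plan is to work with the zero-extension $\tilde f$ on $\mathbb{R}^N$ and transfer every statement back to $\Omega$ using Proposition~\ref{fact_pola_domain}(iv), which asserts that $\tilde f_H=0$ a.e.\ in $\Omega^c$.

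For (i), the clean observation is that on each two-point orbit $\{x,\sigma_H(x)\}$ of the reflection, the polarization simply rearranges the pair $(\tilde f(x),\tilde f(\sigma_H(x)))$ into $(\max,\min)$ when $x\in H$ and into $(\min,\max)$ when $x\in \overline H^c$. Consequently, for every $t\in\mathbb{R}$,
\begin{equation*}
\chi_{\{\tilde f_H>t\}}(x)+\chi_{\{\tilde f_H>t\}}(\sigma_H(x))=\chi_{\{\tilde f>t\}}(x)+\chi_{\{\tilde f>t\}}(\sigma_H(x))\quad\text{a.e.\ on }\mathbb{R}^N,
\end{equation*}
as one checks by the two cases $\tilde f(x)\gtrless \tilde f(\sigma_H(x))$. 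Integrating and using that $\sigma_H$ is a measure-preserving involution yields $|\{\tilde f_H>t\}|=|\{\tilde f>t\}|$. Since $f_H\ge 0$ a.e.\ on $\Omega$ (for $x\in\Omega\cap H$ this is automatic from $\tilde f\ge 0$, and for $x\in\Omega\cap\overline H^c$ Proposition~\ref{fact_pola_domain}(ii) gives $\sigma_H(x)\in\Omega$, hence $f_H(x)=\min\{f(x),f(\sigma_H(x))\}\ge 0$), Proposition~\ref{fact_pola_domain}(iv) lets us rewrite the equimeasurability as $|\{f_H>t\}|=|\{f>t\}|$ in $\Omega$ for every $t$, i.e.\ $f_H\in\mathcal E(f)$.

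The $L^p$-invariance in (ii) is then immediate from (i) via the layer-cake identity $\int_\Omega h^p\,dx=p\int_0^\infty t^{p-1}|\{h>t\}|\,dt$ applied to $h=f$ and $h=f_H$.

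The gradient identity is the main technical step, and I would follow the standard Brock--Solynin approach. First, establish $\|\nabla \phi_H\|_p=\|\nabla\phi\|_p$ for $\phi\in C_c^\infty(\mathbb{R}^N)$ with $\phi\ge 0$ by a pointwise case analysis on each orbit, using that $\sigma_H$ is an orthogonal reflection so $|\nabla(\tilde\phi\circ\sigma_H)|=|\nabla\tilde\phi|\circ\sigma_H$; this shows that $(|\nabla\tilde\phi_H(x)|,|\nabla\tilde\phi_H(\sigma_H(x))|)$ is a reordering of $(|\nabla\tilde\phi(x)|,|\nabla\tilde\phi(\sigma_H(x))|)$ on $\{\tilde\phi(x)\ne\tilde\phi(\sigma_H(x))\}$, while on the coincidence set both gradients agree in modulus. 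For general $f\in W^{1,p}_0(\Omega)$ with $f\ge 0$, approximate by nonnegative $\phi_n\in C_c^\infty(\Omega)$ in $W^{1,p}_0$; the pointwise Lipschitz estimate $|\phi_{n,H}-\phi_{m,H}|\le \max\{|\phi_n-\phi_m|,|(\phi_n-\phi_m)\circ\sigma_H|\}$ combined with the smooth-case gradient identity shows $\{(\phi_n)_H\}$ is Cauchy in $W^{1,p}(\mathbb{R}^N)$, its $L^p$-limit is $\tilde f_H$, and passing to the limit in the gradient identity, together with Proposition~\ref{fact_pola_domain}(iv), places $f_H$ in $W^{1,p}_0(\Omega)$ with $\|\nabla f_H\|_p=\|\nabla f\|_p$. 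I expect the pointwise gradient bookkeeping in the smooth case to be the main obstacle, but it reduces to a finite case analysis that is well documented in the polarization literature.
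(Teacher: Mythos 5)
Your argument for (i) follows the paper's strategy exactly: zero-extend, establish equimeasurability on $\mathbb{R}^N$, and use Proposition~\ref{fact_pola_domain}-(iv) to kill the contribution from $\Omega^c$. The only difference is that you derive the identity $|\{\tilde f_H>t\}|=|\{\tilde f>t\}|$ explicitly via the two-point orbit decomposition, whereas the paper simply asserts it; that is a worthwhile addition. The layer-cake derivation of the $L^p$-invariance in (ii) is also correct. For (ii) the paper supplies no proof and instead cites \cite[Proposition~2.3]{Scaftingen2005}, so you are attempting more than the authors did.

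The gradient part of (ii), however, contains a real gap. You claim that the pointwise estimate $|\phi_{n,H}-\phi_{m,H}|\le\max\{|\phi_n-\phi_m|,|(\phi_n-\phi_m)\circ\sigma_H|\}$ together with the smooth-case gradient identity implies that $\{(\phi_n)_H\}$ is Cauchy in $W^{1,p}(\mathbb{R}^N)$. It does not: that estimate controls only the $L^p$ distance, and since polarization is nonlinear one has $(\phi_n)_H-(\phi_m)_H\ne(\phi_n-\phi_m)_H$, so the gradient identity applied to $\phi_n-\phi_m$ says nothing about $\nabla\bigl((\phi_n)_H-(\phi_m)_H\bigr)$. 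Indeed, on the set where $\phi_n(x)\ge\phi_n(\sigma_Hx)$ but $\phi_m(x)<\phi_m(\sigma_Hx)$, the two polarized gradients at $x$ are $\nabla\phi_n(x)$ and $\nabla(\phi_m\circ\sigma_H)(x)$, and their difference is not controlled by $\|\phi_n-\phi_m\|_{W^{1,p}}$. What boundedness plus weak lower semicontinuity gives (for $p>1$) is only $\|\nabla f_H\|_p\le\|\nabla f\|_p$, not the equality; trying to upgrade this via ``norms converge $\Rightarrow$ strong convergence'' is circular, since one would need to know beforehand that $\|\nabla f_H\|_p=\|\nabla f\|_p$. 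The clean fix is to drop the approximation entirely: $\tilde f_H$ coincides with $\max\{\tilde f,\tilde f\circ\sigma_H\}$ on $H$ and with $\min\{\tilde f,\tilde f\circ\sigma_H\}$ on $\overline{H}^c$, both of which lie in $W^{1,p}(\mathbb{R}^N)$; a.e.\ the pair $\bigl(|\nabla\max\{u,v\}|,|\nabla\min\{u,v\}|\bigr)$ is a permutation of $\bigl(|\nabla u|,|\nabla v|\bigr)$, and integrating over $H$ and changing variables by $\sigma_H$ yields the gradient-norm equality directly. This is essentially what the cited reference does.
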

\begin{proof} 

$(i)$ Let $t<0$. Since $f \ge 0$, it is clear that $\tilde{f}_H \ge 0$. Thus $f_H \ge 0$ and hence $\abs{\{x \in \Om: f_H(x)>t\}}=|\Om|$. Let $t \ge 0$. In this case, it is easy to observe that
\begin{align}\label{eq1}
\abs{\{x\in \Om:f(x)>t\}} & = \abs{\{x\in \R^N:\tilde{f}(x)>t\}} \no \\
&= \abs{\{x\in \R^N:\tilde{f}_H(x)>t\}} \no \\
& =\abs{\{x\in \Om:f_H(x)>t\}}+\abs{\{x\in \Om^c:\tilde{f}_H(x)>t\}}.
\end{align}
Since $f \ge 0$ a.e. in $\Om$, applying Proposition \ref{fact_pola_domain}-$(iv)$ we have $\abs{\{x\in \Om^c:\tilde{f}_H(x)>t\}} = 0$. Therefore, from \eqref{eq1} we conclude $\abs{\{x\in \Om:f(x)>t\}}=\abs{\{x\in \Om:f_H(x)>t\}}.$

$(ii)$ Both the claims follow from \cite[Proposition 2.3]{Scaftingen2005}.
\end{proof}

In the following remark, we enlist some elementary  facts about the polarized domains and functions. If $g=h$ a.e. in $\Om$, then we write $g=h$ in $\Om$ now onwards.
\begin{remark}\label{Facts_polarization}
Let $H\in \H, \Om\subset\R^N$ be a domain and $f:\Om\rightarrow\R^+$ be a measurable function.
\begin{enumerate}[(i)]
    \item \label{Reflection_symmetry} If $\Om=\Om_H=\Om_{\overline{H}^c}$, then $\Om$ is symmetric with respect to the hyperplane $\pa H$. For such domain if $f$ satisfies $f=f_H=f^H$ in $\Om$, then it is easy to see that $f=f\circ \sigma_H$ in $\Om$, i.e., $f$ is symmetric with respect to $\pa H$.
    
    \item \label{Upper} From Definition \ref{pola_def}-(ii), it follows that 
    \begin{align*}
        f^H =f_{\overline{H}^c},\quad f_H=f^{\overline{H}^c}, \quad (f^H)_H=(f_H)_H=f_H,\quad (f_H)^H=(f^H)^H=f^H.
    \end{align*}
    \item \label{Equimeasurability} 
    If $\Om=\Om_{\overline{H}^c}$, then (analogous to Proposition \ref{pola_bounded}-$(i)$), $f^H$ is a rearrangement of $f$. However, the assumption $\Om=\Om_H$ alone is not sufficient to ensure that $f^H$ is a rearrangement of $f$. For example, we consider an open set  $\Om := \{x\in \R^2: |x|<1\}\cap \{x \in \R^2: x_2>0 \}$. Let $f: \Om \rightarrow \R^+ \setminus \{0\}$ be a measurable function. Let $H\in \H_0(e_2)$ where $e_2 = (0,1)$. Then $\Om= \Om_H$ and $\sigma_H(\Om) \neq \Om$. Therefore,  $\sigma_H(\Om \cap \overline{H}^c) \subsetneq \Om \cap H$ (by Proposition \ref{fact_pola_domain}-$(ii)$). Set $A= (\Om \cap H) \setminus \sigma_H(\Om \cap \overline{H}^c)$. From Proposition \ref{fact_pola_domain}-$(iii)$, $|A|>0$. Then for each $x \in A, \sigma_H(x) \in \Om^c \cap H^c$ and hence using Definition \ref{pola_def}-(ii),  $f_H(x) = 0$. Thus $\abs{x \in \Om: f_H(x)>0} \le |\Om \setminus A| < |\Om| = \abs{x \in \Om: f(x)>0}$.
    
    \item If $\Om=\Om_H$ and $f\in \hh2$, then it is not necessary that $f^H$ lies in $\hh2$. For example, consider $\Om \subset \R^2$ and $H$ as above. For such $H$, it is easy to see that $f^H\notin \hh2$. However, in addition if $\Om=\Om_{\overline{H}^c}$, then we have $f^H=f_{\overline{H}^c}\in \hh2$.
    
\end{enumerate}
\end{remark}

\subsubsection{Hardy-Littlewood and reverse Hardy-Littlewood inequality}
Next, we discuss the Hardy-Littlewood and the reverse Hardy-Littlewood inequality for polarization. 
\begin{proposition}\label{Hardy_RN}
Let $p \in (1,\infty)$, $H\in \H_0$ and $v,w\in L^p(\R^N)$ be such that $vw\in L^1(\R^N)$. Then
$$\int_{\R^N} v(x) w(x) \, \dx \leq \int_{\R^N} v_H(x) w_H(x)\, \dx.$$
\end{proposition}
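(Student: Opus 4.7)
The plan is to reduce the inequality to a pointwise two-point comparison followed by integration; no deep machinery is needed. First I would establish the elementary scalar inequality: for any $a,b,c,d\in\R$,
\begin{equation*}
ac+bd \;\le\; \max(a,b)\max(c,d)+\min(a,b)\min(c,d).
\end{equation*}
Assuming without loss of generality that $a\ge b$, if $c\ge d$ the two sides coincide at $ac+bd$, while if $c<d$ the right-hand side equals $ad+bc$ and the claim reduces to $(a-b)(c-d)\le 0$, which is immediate from $a-b\ge 0$ and $c-d<0$.

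Next I would apply this with $a=v(x)$, $b=v(\sigma_H(x))$, $c=w(x)$, $d=w(\sigma_H(x))$ for a.e.\ $x\in H$. Since $\pa H$ has Lebesgue measure zero, $\sigma_H(x)\in H^c$ for such $x$, so Definition \ref{pola_def}-(ii) gives
\begin{equation*}
v_H(x)=\max\{v(x),v(\sigma_H(x))\},\qquad v_H(\sigma_H(x))=\min\{v(x),v(\sigma_H(x))\},
\end{equation*}
and the analogous identities for $w_H$. Substituting into the scalar inequality produces the pointwise two-point estimate
\begin{equation*}
v(x)w(x)+v(\sigma_H(x))w(\sigma_H(x)) \;\le\; v_H(x)w_H(x)+v_H(\sigma_H(x))w_H(\sigma_H(x)) \quad \text{for a.e. } x\in H.
\end{equation*}

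I would then integrate this inequality over $H$ and perform the change of variables $y=\sigma_H(x)$. Since $\sigma_H$ is an affine isometry of $\R^N$ preserving Lebesgue measure and mapping $H$ bijectively onto $\R^N\setminus\overline{H}$ up to a null set, each side splits as $\int_H(\,\cdot\,)\dx+\int_{H^c}(\,\cdot\,)\dy=\int_{\R^N}(\,\cdot\,)\dx$, which delivers the stated inequality.

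I do not anticipate any serious obstacle; the argument is entirely scalar and measure-theoretic. The only routine point to verify is that $v_Hw_H$ is integrable so that the right-hand integral is well defined. This follows from the pointwise bound $|v_H|\le|v|+|v\circ\sigma_H|$ (and its analogue for $w_H$), together with the hypothesis $vw\in L^1(\R^N)$ and the measure-preserving property of $\sigma_H$; any cross terms arising from expansion can be controlled by a standard truncation together with dominated convergence. Note that the assumption $H\in\H_0$ (that is, $0\in\overline{H}$) plays no role in the proof and is included only for consistency with the broader framework of the paper.
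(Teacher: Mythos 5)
Your main argument is the classical two-point proof of the Hardy--Littlewood inequality for polarizations, and it is correct. The paper itself does not write out a proof: it simply cites \cite[Lemma 2]{BrockF}, which rests on exactly this two-point comparison. So your proposal makes the cited fact self-contained rather than taking a genuinely different route, which is a reasonable thing to do here. Your observation that the hypothesis $H\in\H_0$ is not used is also correct.

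The one claim that does not hold up is the closing integrability remark. The bound $|v_H|\le|v|+|v\circ\sigma_H|$ together with its analogue for $w_H$ does not yield $v_Hw_H\in L^1(\R^N)$: after expanding, the cross terms $|v|\,|w\circ\sigma_H|$ and $|v\circ\sigma_H|\,|w|$ are not controlled by $v,w\in L^p$ and $vw\in L^1$, and no truncation/dominated-convergence device will fix this. Indeed $v_Hw_H$ can fail to be integrable under the stated hypotheses. For example, in $\R$ with $H=\{x>0\}$, fix $p\in(1,2)$ and $a$ with $\tfrac12<a<\tfrac1p$, and set $v(x)=|x-1|^{-a}\chi_{\{|x-1|<1/2\}}$, $w(x)=|x+1|^{-a}\chi_{\{|x+1|<1/2\}}$. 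Then $v,w\in L^p$, $vw\equiv 0\in L^1$, but $v_H=v$ and $w_H=w\circ\sigma_H$, so $v_Hw_H=|x-1|^{-2a}\chi_{\{|x-1|<1/2\}}\notin L^1$. This does not invalidate the inequality: your paired pointwise estimate shows that on $H$ the quantity $v_Hw_H+(v_Hw_H)\circ\sigma_H$ dominates the integrable function $vw+(vw)\circ\sigma_H$, so its integral over $H$ --- which, after your change of variables, is the right-hand side --- is well defined in $(-\infty,+\infty]$, and the inequality holds in that extended sense (and the right side is genuinely finite in all the paper's applications, where one factor is nonnegative and one is in the dual exponent space). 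You should replace the dominated-convergence remark with this domination argument, or simply add the hypothesis $(v_Hw_H)^-\in L^1$.
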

\begin{proof}
 For a proof, we refer to \cite[Lemma 2]{BrockF}.
\end{proof}

In the following proposition, we first derive the Hardy-Littlewood inequality for functions defined on polarization invariant domains other than $\R^N$. Then, we prove a reverse Hardy-Littlewood inequality involving the polarization and the dual-polarization of functions.

\begin{proposition}\label{Hardy_Littlewood} Let $p \in (1, \infty)$, $H\in \H_0$ and $\Om\subset \R^N$ be  a bounded domain such that $\Om=\Om_H$. Let $v,w \in L^p(\Om)$ with $vw \in L^1(\Om)$. 
\begin{enumerate}
 \item [$(i)$ \rm{(Hardy-Littlewood inequality)}] Assume that at least one of $v$ and $w$ are nonnegative. Then 
\begin{align}\label{H-L}
    \int_{\Om} v(x) w(x) \, \dx \leq \int_{\Om} v_H(x) w_H(x) \, \dx.
\end{align}
\item [$(ii)$ \rm{(Reverse Hardy-Littlewood inequality)}] Assume that $w$ is nonnegative. Then
$$\int_{\Om} v^H(x) w_H(x) \, \dx \leq \int_{\Om} v(x)w(x) \, \dx.$$
\end{enumerate}
\end{proposition}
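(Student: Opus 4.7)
The plan is to reduce both inequalities on $\Omega$ to the statement on $\mathbb{R}^N$ given by Proposition \ref{Hardy_RN}, by extending $v$ and $w$ by zero to all of $\mathbb{R}^N$ and then discarding the contribution from $\Omega^c$ with the help of Proposition \ref{fact_pola_domain}-(iv) (the key fact that, for a nonnegative function on $\Omega$, the polarization of its zero-extension still vanishes on $\Omega^c$).

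For part $(i)$, I may assume $w \ge 0$ in $\Omega$ by the symmetry of the statement in $v$ and $w$. Let $\tilde v$, $\tilde w$ denote the zero-extensions of $v,w$ to $\mathbb{R}^N$. Since $\Omega$ is bounded and $v,w\in L^p(\Omega)$, we have $\tilde v,\tilde w\in L^p(\mathbb{R}^N)$ and $\tilde v\tilde w\in L^1(\mathbb{R}^N)$, so Proposition \ref{Hardy_RN} yields
$$\int_{\mathbb{R}^N}\tilde v\,\tilde w\,\dx \;\le\; \int_{\mathbb{R}^N}\tilde v_H\,\tilde w_H\,\dx.$$
The left side is exactly $\int_\Omega vw\,\dx$ since $\tilde v\tilde w$ vanishes on $\Omega^c$. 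For the right side, since $\tilde w\ge 0$, Proposition \ref{fact_pola_domain}-(iv) gives $\tilde w_H=0$ a.e.\ on $\Omega^c$, so the right side collapses to $\int_\Omega v_H w_H\,\dx$, using the paper's definitions $v_H=\tilde v_H|_\Omega$ and $w_H=\tilde w_H|_\Omega$.

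For part $(ii)$, I plan to deduce the reverse inequality from $(i)$ via the pointwise identity $(-u)_H = -u^H$, valid for any measurable $u$: for $x\in H$,
$$(-u)_H(x) \;=\; \max\bigl(-u(x),-u(\sigma_H(x))\bigr) \;=\; -\min\bigl(u(x),u(\sigma_H(x))\bigr) \;=\; -u_H(\sigma_H(x)) \;=\; -u^H(x),$$
and an analogous computation works for $x\notin H$. Applying part $(i)$ to $-v$ and $w$ (admissible since $w\ge 0$ and $(-v)w\in L^1(\Omega)$) gives
$$-\int_\Omega vw\,\dx \;=\; \int_\Omega (-v)w\,\dx \;\le\; \int_\Omega (-v)_H w_H\,\dx \;=\; -\int_\Omega v^H w_H\,\dx,$$
which rearranges to the desired inequality.

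The argument is essentially routine once Proposition \ref{Hardy_RN} and Proposition \ref{fact_pola_domain}-(iv) are in hand; there is no hard analytic step. The only point that merits some care is the bookkeeping between the polarization of a function defined on $\Omega$ (constructed via the zero-extension) and polarization on $\mathbb{R}^N$. In particular, the identity $(-v)_H|_\Omega=-v^H$ must be read in the paper's convention $f^H=\tilde f_H\circ\sigma_H$, which I would verify pointwise on $\Omega\cap H$ and $\Omega\cap H^c$ separately. Beyond this definitional check I do not foresee any serious obstacle.
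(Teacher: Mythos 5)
Your proof is correct and follows essentially the same route as the paper: extend by zero, invoke Proposition \ref{Hardy_RN}, kill the $\Omega^c$ contribution with Proposition \ref{fact_pola_domain}-(iv) for part (i), and deduce part (ii) from part (i) via the identity $(-v)_H=-v^H$. The only cosmetic difference is that you assume WLOG $w\ge 0$ where the paper takes $v\ge 0$, and you spell out the pointwise check of $(-u)_H=-u^H$ that the paper states in one line.
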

\begin{proof} $(i)$ Let $\tilde{v},\tilde{w}$ are the zero extensions of $v,w$ respectively to $\mathbb{R}^N$. Then, using Proposition \ref{Hardy_RN}, we have 
\begin{equation}\label{in_2}
    \int_{\R^N}  \tilde{v}(x) \tilde{w}(x) \, \dx \leq  \int_{\R^N} \tilde{v}_H(x) \tilde{w}_H(x) \, \dx.
\end{equation}
From the definition $\tilde{v}(x)= 0$ for $x \in \Om^c$. Using \eqref{in_2}, we write
\begin{align*}
    \int_{\Om} v(x)w(x) \, \dx=\int_{\R^N}  \tilde{v}(x)\tilde{w}(x) \, \dx & \leq \int_{\R^N} \tilde{v}_H(x) \tilde{w}_H(x) \, \dx \no \\
    & = \int_{\Om} v_H(x)w_H(x) \, \dx +\int_{\Om^c} \tilde{v}_H(x) \tilde{w}_H(x) \, \dx.
\end{align*}
Without loss of generality, we assume $v \ge 0$ in $\Om$. Applying Proposition \ref{fact_pola_domain}-$(iv)$ we see that $\tilde{v}_H= 0$ in $\Om^c$. Thus from the above inequality, we get \eqref{H-L}.

$(ii)$ First, notice that
\begin{align*}
    -v^H(x)=-v_H(\sigma_H(x))
    =(-v)_H(x).
\end{align*}
Now, using \eqref{H-L}, we get
$$\int_{\Om}(-v)(x) w(x) \, \dx \leq \int_{\Om}(-v)_H(x) w_H(x) \, \dx= -\int_{\Om}v^H(x)w_H(x) \, \dx.$$ 
Therefore, $ \int_{\Om} v^H(x) w_H(x) \, \dx \leq \int_{\Om} v(x) w(x) \, \dx.$ 
\end{proof}

\subsection{Symmetrizations}\label{symmetriz_section}
In this section, we define Schwarz symmetry, Steiner symmetry, and Foliated Schwarz symmetry of a function. We also characterize these symmetries using polarization.  
\subsubsection{Schwarz symmetry}
\begin{definition}[Schwarz symmetric function]\label{Schwarz_func}
Let $f:B_1(0)\rightarrow\R$ be a measurable function. Then $f$ is called \textit{Schwarz symmetric} in $B_1(0)$ if $f$ is radial and radially decreasing in $B_1(0)$.
\end{definition}

Now we give an equivalent criterion for Schwarz symmetry via polarization. The following result is proved in \cite[Lemma 6.3]{Brock2000}.
\begin{proposition}\label{Schwarz_char}
Let $f:B_1(0)\rightarrow\R$ be a measurable function. Then $f$ is Schwarz symmetric in $B_1(0)$ if and only if $f=f_H$ for all $H\in \H(0)$.
\end{proposition}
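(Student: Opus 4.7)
My plan is to establish the equivalence by treating the two directions separately, with the following geometric observation as the lever: for $H \in \H(0)$ (i.e., $0 \in H$) and $x \in H$, one always has $|x| < |\sigma_H(x)|$. Indeed, writing $\pa H = \{z \in \R^N : z \cdot \nu = c\}$ with $H = \{z : z \cdot \nu > c\}$ for a unit vector $\nu$, the condition $0 \in H$ forces $c < 0$, and an elementary calculation gives
\[
|\sigma_H(x)|^2 - |x|^2 = -4c\,(x \cdot \nu - c) > 0 \qquad \text{whenever } x \in H.
\]
This single inequality is what ties the Schwarz-symmetry notion (which depends only on $|x|$) to the polarization operation.

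For the forward implication, I would assume $f(x) = \vph(|x|)$ with $\vph : [0,1] \to \R$ nonincreasing. Pick any $H \in \H(0)$. For $x \in H$, the displayed inequality together with the monotonicity of $\vph$ gives $f(x) \ge f(\sigma_H(x))$, whence by Definition \ref{pola_def}-(ii), $f_H(x) = \max\{f(x), f(\sigma_H(x))\} = f(x)$. The case $x \in H^c$ is entirely symmetric (now $|x| \ge |\sigma_H(x)|$, so the minimum is achieved at $x$), and we conclude $f = f_H$.

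For the backward implication, I would invoke the classical approximation result (cf.~\cite{Brock2000,Scaftingen2005}) providing a sequence $\{H_n\} \subset \H(0)$ along which the iterated polarizations $f_n := (\cdots(f_{H_1})_{H_2}\cdots)_{H_n}$ converge, in $L^p$ and a.e.\ along a subsequence, to the Schwarz symmetrization $f^\ast$ of $f$. The polarization invariance $f = f_H$ for every $H \in \H(0)$ propagates by induction: $f_1 = f_{H_1} = f$, and if $f_k = f$ a.e., then $f_{k+1} = (f_k)_{H_{k+1}} = f_{H_{k+1}} = f$ a.e. Passing to the limit yields $f = f^\ast$ a.e., which is exactly the Schwarz symmetry of $f$.

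The main subtlety lies in the backward direction. A direct pointwise argument (taking $H$ to be the half-space containing $p$ bounded by the perpendicular bisector of $[p,q]$ for any $p,q$ with $|p| < |q|$) easily yields the radial monotonicity $f(p) \ge f(q)$ for a.e.~such pair; but extracting a true \emph{radial} dependence---i.e.~$f(p) = f(q)$ whenever $|p| = |q|$---is not immediate, because in that case the relevant perpendicular bisector passes through the origin and the corresponding half-space lies on the boundary of $\H(0)$ rather than in $\H(0)$ itself. This is precisely why an approximation result for the Schwarz symmetrization through iterated polarizations is essentially unavoidable, and it is the one nontrivial ingredient I would need to quote.
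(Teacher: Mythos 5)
The paper does not give a proof of Proposition \ref{Schwarz_char}; it refers to \cite[Lemma 6.3]{Brock2000}, so there is no in-text argument to compare against. Your reconstruction is correct and follows essentially the same route as the cited reference: the forward implication via the identity $|\sigma_H(x)|^2-|x|^2=-4c(x\cdot\nu-c)$, and the backward implication via approximation of the Schwarz symmetrization by iterated polarizations. Two small remarks. First, in the forward direction you should also handle the case $\sigma_H(x)\notin B_1(0)$: the extension-by-zero in Definition \ref{pola_def}-(ii) then gives $f_H(x)=\max\{f(x),0\}$, so the equality $f_H=f$ really needs $f\ge 0$, which is the paper's standing convention for polarizing functions on proper subdomains (cf.\ Proposition \ref{pola_bounded}, where $f:\Om\to\R^+$). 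Second, the difficulty you flag at $|p|=|q|$ is not in fact an obstruction, and the approximation theorem (which presumes some $L^p$ integrability) can be bypassed by a level-set argument. Since $\{f_H>t\}=(\{f>t\})_H$ and $(\{f>t\})_H\subset B_1(0)$, the hypothesis $f=f_H$ for every $H\in\H(0)$ makes each super-level set $A_t:=\{f>t\}$ polarization-invariant for all such $H$. The perpendicular-bisector construction then shows that no Lebesgue density point of $A_t^c\cap B_1(0)$ can lie strictly closer to the origin than a density point of $A_t$, and this alone forces $A_t$ to equal a centered ball $B_{r(t)}(0)$ up to a null set; one never needs to compare $f(p)$ and $f(q)$ on a common sphere. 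Running this over a countable dense set of levels $t$ recovers that $f$ is radial and radially decreasing a.e., completing the backward direction for arbitrary measurable $f\ge 0$.
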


\subsubsection{Steiner symmetry} 
In this section, we give a definition of Steiner symmetrization; cf. \cite[Section 2.2]{Henrot2006}. First, we fix some notations. We write $x \in \RN$ as  $x=(x',x_N)$, where $x'=(x_1,x_2,\dots,x_{N-1})\in \R^{N-1}$ and $x_N\in \R$. Let $\pi_{N-1}$ denotes the orthogonal projection from $\R^N$ to $\R^{N-1}$. For a measurable set $\Om\subset \R^N$, we define the slice of $\Om$ through $x'$ in the direction $x_N$ as $\Om_{x'}=\{x_N\in \R:(x',x_N)\in\Om\}$.
\begin{definition}[Steiner symmetric domain]\label{Steiner_dom}
The \textit{Steiner symmetrization} of $\Om$ with respect to the hyperplane $x_N=0$ is defined by $$\Om^\#=\left\{(x',x_N)\in \R^N: |x_N|< \frac{|\Om_{x'}|_1}{2}, x'\in \pi_{N-1}(\Om)\right\},$$
where $|\cdot|_1$ denotes the $1$-dimensional Lebesgue measure. If $\Om=\Om^\#$ (up to translation), then $\Om$ is said to be \textit{Steiner symmetric} with respect to the hyperplane $x_N=0$ .
\end{definition}

 Equivalently $\Om$ is Steiner symmetric with respect to the hyperplane $x_N=0$ if $(i)$ $\Om$ is symmetric with respect to the hyperplane $x_N=0$, and $(ii)$ $\Om$ is convex with respect to the $x_N$-axis, i.e., any line segment parallel to the $x_N$-axis joining two points in $\Om$ lies completely inside $\Om$.

\begin{definition}[Steiner symmetric function]\label{Steiner_func}
Let $\Om\subset \mathbb{R}^N$ be a measurable set and $f:\Om\rightarrow\R$ be a nonnegative measurable function. Then the \textit{Steiner symmetrization} $f^\#$ of $f$ on $\Om^\#$ with respect to the the hyperplane $x_N=0$ is defined as
$$f^\#(x)=\sup\left\{c\in \R:x\in \{y\in \Om:f(y)\geq c\}^\#\right\}, \;\text{where}\;x\in \Om^\#.$$
Let $\Om=\Om^\#$. If $f=f^\#$ in $\Om$, then $f$ is called \textit{Steiner symmetric} with respect to the hyperplane $x_N=0$.
\end{definition}
 Next, we give a characterization of Steiner symmetric domains and Steiner symmetric functions in terms of polarization; cf. \cite[Lemma 6.3]{Brock2000}.
 \begin{proposition}\label{Steiner_char}
 Let $\Om$ be a measurable set in $\R^N$ and $f:\Om\rightarrow \R$ be a nonnegative measurable function. Also, let $\H_*\subset \H$ be the collection of all half-spaces $H$ such that $H$ contains the hyperplane
 $x_N=0$ and $\pa H$ is parallel to the hyperplane $x_N=0$. Then the following holds:
 \begin{enumerate}[(i)]
     \item $\Om=\Om^\#$ if and  only if $\Om=\Om_H$ for all $H\in \H_*$,
     \item if $\Om=\Om^\#$, then $f$ is Steiner symmetric with respect to the hyperplane $x_N=0$ if and  only if $f=f_H$ for all $H\in \H_*$.
 \end{enumerate}
 \end{proposition}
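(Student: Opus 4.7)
The plan is to reduce Proposition \ref{Steiner_char} to a one-dimensional characterization by exploiting that every polarization with respect to $H\in \H_*$ acts only on the last coordinate. First I would observe that for any $H\in \H_*$ we have $\pa H=\{x_N=c\}$ for some $c\ne 0$, so $\sigma_H(x',x_N)=(x',2c-x_N)$; consequently polarization commutes with slicing: for every $x'\in \R^{N-1}$, the vertical slice $(\Om_H)_{x'}$ equals the one-dimensional polarization of $\Om_{x'}\subset \R$ with respect to the open half-line $\widehat H\subset \R$ of boundary $\{c\}$ having the same orientation as $H$. The analogous identity holds slice-wise for functions, $(f_H)(x',\cdot)=(f(x',\cdot))_{\widehat H}$.

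Having made this reduction, I would prove the following one-dimensional claim: a measurable set $A\subset \R$ satisfies $A=A_{\widehat H}$ for every open half-line $\widehat H\subset \R$ with $0\in \widehat H$ if and only if, up to a null set, $A$ is an open interval centered at the origin. The \emph{if} direction is a direct case analysis on $A=(-a,a)$ with $\widehat H=(c,\infty)$, $c<0$ (and the symmetric case $c>0$). For the \emph{only if} direction I would invoke the classical convergence of iterated polarizations to the Steiner symmetrization \cite[Lemma 6.3]{Brock2000}, which shows that invariance under the full family of admissible half-lines forces $A=A^\#$. Combining this one-dimensional fact with the slicing identity immediately yields part $(i)$: $\Om=\Om_H$ for every $H\in \H_*$ if and only if each slice $\Om_{x'}$ is a symmetric interval about the origin, which by definition means $\Om=\Om^\#$.

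For part $(ii)$, I would pass to super-level sets. A direct check from Definition \ref{pola_def}-$(ii)$ shows that polarization commutes with strict super-level sets, namely $\{f_H>t\}=\{f>t\}_H$ for every $t\in \R$ (using that $\Om=\Om^\#$, hence $\Om=\Om_H$ for all $H\in \H_*$ by $(i)$, together with Proposition \ref{fact_pola_domain}-$(iv)$ to ensure the polarized level set stays inside $\Om$). Thus $f=f_H$ for every $H\in \H_*$ is equivalent to $\{f>t\}=\{f>t\}_H$ for every $t$ and every such $H$, which by $(i)$ is equivalent to $\{f>t\}=\{f>t\}^\#$ for every $t$; by the layer-cake definition of the Steiner symmetrization of a function, this is exactly $f=f^\#$.

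The main obstacle I anticipate is the \emph{only if} direction of the one-dimensional claim: because $H\in \H_*$ requires $c\ne 0$, one cannot directly test polarization across the hyperplane $\{x_N=0\}$ itself, so symmetry of each slice about this hyperplane must be extracted via a limiting/approximation argument, which is the content of the Brock--Solynin type result cited. The remaining ingredients (the slicing decomposition, the super-level-set identity, and the \emph{if} direction of the 1D characterization) reduce to elementary direct verifications.
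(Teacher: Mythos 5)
The paper does not give its own proof of this proposition: it simply states it with the citation ``cf.\ \cite[Lemma 6.3]{Brock2000}'', so there is no in-house argument to compare against. Your proposal is a reasonable and correct reconstruction of that cited result, organized via a slice-wise reduction to dimension one. The observations that do the real work are all sound: polarization with respect to $H\in\H_*$ acts only on the last coordinate, hence $(\Om_H)_{x'}=(\Om_{x'})_{\widehat H}$ and $(f_H)(x',\cdot)=(f(x',\cdot))_{\widehat H}$, which you verify directly from the definition; the \emph{if} direction of the $1$D characterization is an elementary case check; polarization commutes with strict super-level sets ($\{f_H>t\}=\{f>t\}_H$, valid for $t\ge 0$ because $f\ge 0$, with Proposition \ref{fact_pola_domain}-$(iv)$ ensuring $\{f>t\}_H\subset\Om$ once $\Om=\Om_H$), and passing from $\{f>t\}=\{f>t\}^\#$ for all $t$ to $f=f^\#$ follows from the layer-cake form of Definition \ref{Steiner_func} via $\{f\ge c\}=\bigcap_{t<c}\{f>t\}$. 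You correctly locate the only genuinely nontrivial step --- the \emph{only if} direction of the $1$D claim, where the test family excludes the half-line with boundary at $0$ --- and you appeal to the Brock--Solynin convergence-of-iterated-polarizations machinery for it, which is exactly what the paper's citation encapsulates. Two small points worth making explicit if you write this out: (a) the passage from ``$\Om=\Om_H$ up to a null set for each $H\in\H_*$'' to ``$\Om_{x'}=(\Om_{x'})_{\widehat H}$ up to a null set for a.e.\ $x'$ and \emph{all} admissible $\widehat H$'' needs a Fubini argument over a countable dense subfamily of half-lines before invoking the $1$D result; (b) your $1$D characterization implicitly assumes $|A|<\infty$ (otherwise ``bounded open interval centered at $0$'' is not the right conclusion), which is harmless in the application since the paper uses this only on bounded domains.
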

\subsubsection{Foliated Schwarz symmetry}
First, we define the foliated Schwarz symmetrization of a function on radial domains following \cite{Brock2020}.
\begin{definition}[Foliated Schwarz symmetrization]\label{Foliated_sz} Let $\Om$ be a radial domain with respect to 0 and  $f:\Om\rightarrow \R$ be a nonnegative measurable function. Then the \textit{foliated Schwarz symmetrization} $f^\cd$ of $f$ with respect to a vector $\beta\in \S^{N-1}$ is the function satisfying the following properties:
\begin{enumerate}[(i)]
     \item $f^\cd(x)=h(r,\theta)$, $\forall x\in \Om$, for some function $h:[0,\infty)\times [0,\pi)\rightarrow\R$, which is decreasing in $\theta$, where $(r,\theta):=\big(|x|,{\rm{arccos}}(\frac{x\cdot \beta}{|x|})\big)$.
    \item for $a,b\in\mathbb{R}$ with $a<b$ and $r\geq 0$, $$|\{x:|x|=r,\; a<f(x)\leq b\}|_{N-1}=|\{x:|x|=r,\; a<f^\cd(x)\leq b\}|_{N-1},$$
    where $|\cdot|_{N-1}$ denotes the $(N-1)$-dimensional Lebesgue measure.
\end{enumerate}
\end{definition}
\begin{definition}[Foliated Schwarz symmetric function]\label{Foliated_symmetry}
Let $\Om$ be a radial domain with respect to 0. Then a nonnegative measurable function $f:\Om\rightarrow \R$ is said to be \textit{foliated Schwarz symmetric} with respect to a vector $\beta\in \S^{N-1}$ if $f=f^\cd$.
\end{definition}
Next, we give an analogous definition of foliated Schwarz symmetry on nonconcentric annular domains motivated by \cite{Anoop2020Ashok}.
\begin{definition}[Foliated Schwarz symmetry on non-concentric annulus]\label{Foliated_ann}
Let $\Om_{R,r}=B_R(0)\setminus \overline{B_r(te_1)}$, where $0<t< R-r$, and $f:\Om_{R,r}\rightarrow\R$ be a nonnegative measurable function. We call $f$ is \textit{foliated Schwarz symmetric} with respect to $-e_1$ if $\tilde{f}$ is foliated Schwarz symmetric with respect to $-e_1$ in $B_R(0)$, where $\tilde{f}$ is the extension of $f$ to $B_R(0)$ by 0 outside of $\Om_{R,r}$.
\end{definition}

From the definition, it follows that if $f$ is foliated Schwarz symmetric with respect to $\beta\in \S^{N-1}$, then $f$ is axially symmetric with respect to the axis $\R \beta$ and decreasing in the polar angle $\theta={\rm{arccos}}\big(\frac{x\cdot\beta}{|x|}\big)$. Alternatively, this symmetry is also known as \textit{spherical symmetry} \cite{Kawohl1985} or \textit{co-dimension one symmetry} \cite{Brock2003} in the literature. Now we state a characterization for foliated Schwarz symmetry in terms of polarization. The first part of the following proposition is proved in \cite[Theorem 3.5]{Brock2020} for measurable functions. For continuous functions, the second assertion is proved in \cite[Proposition 2.4]{Weth2010}. However, using a similar approach as given in \cite[Theorem 3.5]{Brock2020}, one can obtain the same result for measurable functions. We omit the proof here.
\begin{proposition} \label{Foliated_char}
Let $p\in[1,\infty)$, $\Om$ be a radial domain with respect to 0 and $f\in L^p(\Om)$ be nonnegative. 
\begin{enumerate}[(i)]
    \item If for every $H\in \widehat{\H}_0$, either $f_H=f$ or $f^H=f$, then there exists $\ga\in \S^{N-1}$ such that $f$ is foliated Schwarz symmetric with respect to $\ga$.
    \item Let $\beta\in \S^{N-1}$. Then $f$ is foliated Schwarz symmetric with respect to $\beta$ if and only if $f_H=f$ for all $H\in \widehat{\H}_0(\beta)$.
\end{enumerate}
\end{proposition}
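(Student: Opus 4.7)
My plan is to prove the two parts separately and to use $(ii)$ as a tool in the proof of $(i)$.

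For $(ii)$ I would first handle the ``only if'' direction. Supposing $f$ is foliated Schwarz symmetric with respect to $\beta$, so that $f(x)=h(|x|,\theta(x))$ with $\theta(x)=\arccos(x\cdot\beta/|x|)$ and $h$ decreasing in $\theta$, I fix $H\in\widehat{\H}_0(\beta)$ with inward unit normal $\nu$, so $\nu\cdot\beta>0$ and $\sigma_H(x)=x-2(x\cdot\nu)\nu$. For $x\in H$ one has $|\sigma_H(x)|=|x|$ (because $0\in\partial H$) and $\sigma_H(x)\cdot\beta=x\cdot\beta-2(x\cdot\nu)(\nu\cdot\beta)<x\cdot\beta$, whence $\theta(\sigma_H(x))>\theta(x)$ and $f(x)\ge f(\sigma_H(x))$; this is exactly $f_H=f$.

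For the ``if'' direction of $(ii)$, given two points $x\neq y$ in $\Om$ with $|x|=|y|$ and $\beta\cdot x>\beta\cdot y$, I would observe that the hyperplane $\{z\in\R^N:z\cdot(x-y)=0\}$ passes through the origin (since $|x|=|y|$ forces $(x+y)\cdot(x-y)=0$) and bounds a half-space $H$ with $\beta\in H$ (because $\beta\cdot(x-y)>0$), $x\in H$, and $\sigma_H(x)=y$ (a direct computation using $|x|=|y|$). Hence $H\in\widehat{\H}_0(\beta)$ and the hypothesis $f_H=f$ forces $f(x)\ge f(y)$. Consequently, on each sphere $\{|x|=r\}\cap\Om$ the superlevel sets $\{f>c\}$ are upward-closed in the order $x\mapsto\beta\cdot x$, so up to the equator $\{\beta\cdot x=\alpha_c\}$ (which has zero $(N-1)$-dimensional measure on the sphere) they are spherical caps centered at $\beta$; this is precisely foliated Schwarz symmetry with respect to $\beta$.

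For $(i)$, I would set $F:=\{\nu\in\S^{N-1}:f_{H_\nu}=f\}$, where $H_\nu:=\{x\in\R^N:x\cdot\nu>0\}$. Using the relation $f^H=f_{\overline{H}^c}$ recorded in Remark \ref{Facts_polarization}, the hypothesis translates to $F\cup(-F)=\S^{N-1}$. The plan then has three steps. (a) Show that $F$ is closed in $\S^{N-1}$ via the $L^p$-continuity $\nu_n\to\nu\Rightarrow f_{H_{\nu_n}}\to f_{H_\nu}$ in $L^p(\Om)$. (b) If $F=\S^{N-1}$, then $F\cap(-F)=\S^{N-1}$, so $f$ is symmetric under every reflection through the origin, hence radial, hence foliated Schwarz symmetric with respect to an arbitrary $\gamma\in\S^{N-1}$. (c) Otherwise, $G:=F\setminus(-F)$ is nonempty and open (since its complement equals $-F$, closed by (a)) and satisfies $G\cap(-G)=\emptyset$; I then need to produce $\gamma\in\S^{N-1}$ such that $\{\nu:\gamma\cdot\nu>0\}\subset F$, and invoke $(ii)$.

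The main obstacle is step (c), namely producing the direction $\gamma$. The natural route is to establish a geodesic-convexity property of $F$: if $\nu_1,\nu_2\in F$, then every $\nu$ on the shorter geodesic of $\S^{N-1}$ joining $\nu_1$ and $\nu_2$ also lies in $F$. Combined with $G\cap(-G)=\emptyset$ and the openness of $G$, this would force $G$ to sit inside some open hemisphere of $\S^{N-1}$, and the ``symmetry piece'' $F\cap(-F)$ would fill in the remaining boundary directions to give the whole open hemisphere $\{\nu:\gamma\cdot\nu>0\}\subset F$. The geodesic convexity in turn should come from a composition lemma for polarizations together with the $L^p$-continuity of $H\mapsto f_H$; for merely measurable $f\in L^p(\Om)$, rather than the continuous setting of \cite{Weth2010}, I would approximate $f$ in $L^p$ by continuous functions and pass to the limit, following the strategy of \cite{Brock2020}.
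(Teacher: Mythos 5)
The paper does not actually prove this proposition: it cites \cite[Theorem~3.5]{Brock2020} for part~$(i)$ and \cite[Proposition~2.4]{Weth2010} for part~$(ii)$ in the continuous case, and says the measurable case of~$(ii)$ can be obtained ``using a similar approach as given in \cite[Theorem 3.5]{Brock2020}.'' So there is no in-paper argument to compare yours against, and I will just assess the sketch itself.

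Your ``only if'' direction of~$(ii)$ is correct and is the standard elementary computation: for $H\in\widehat{\H}_0(\beta)$ with inward normal $\nu$ and $x\in H$ one has $|\sigma_H(x)|=|x|$ and $\sigma_H(x)\cdot\beta < x\cdot\beta$, hence $\theta(\sigma_H(x))>\theta(x)$ and $f(\sigma_H(x))\le f(x)$ a.e.

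The ``if'' direction of~$(ii)$, as written, has a genuine gap. You fix two \emph{points} $x\ne y$ on the same sphere with $\beta\cdot x>\beta\cdot y$, find $H\in\widehat{\H}_0(\beta)$ with $\sigma_H(x)=y$, and deduce $f(x)\ge f(y)$ from $f_H=f$. But $f\in L^p(\Om)$ is only an equivalence class, and $f_H=f$ holds only a.e.\ with an exceptional null set that \emph{depends on} $H$; so this step is only legitimate if $f$ is continuous (Weth's setting). For merely measurable $f$ one must argue at the level of superlevel sets together with a Fubini argument over the family $\widehat{\H}_0(\beta)$, or directly via $\norm{f-f^\cd}_p=0$; you flag this approximation issue for part~$(i)$ but not for~$(ii)$, where it already bites. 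The plan for~$(i)$ is reasonable in outline (closedness of $F$, the dichotomy via $f^H=f_{\overline{H}^c}$, openness of $G=F\setminus(-F)$), but the step you yourself call the ``main obstacle'' --- that $F$ is geodesically convex and that, together with $G\cap(-G)=\emptyset$ and $F\cup(-F)=\S^{N-1}$, this produces an open hemisphere inside $F$ --- is the actual content of Brock--Croce--Guib\'e--Mercaldo's Theorem~3.5 and is asserted but not proved. Also note in passing that $G$ open with $G\cap(-G)=\emptyset$ alone does \emph{not} force $G$ into a hemisphere; the convexity of $F$ (or of the closure of $G$) is essential, so that step must carry real weight.
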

\begin{remark}\label{Foliated_noncon}
From Definition \ref{Foliated_ann} and Proposition \ref{Foliated_char}-$(ii)$, a nonnegative measurable function $f:\Om_{R,r}\rightarrow\R$ is foliated Schwarz symmetric with respect to $-e_1$ if and only if $\tilde{f}_H=\tilde{f}\;\text{in}\;B_R(0),\;\forall\,H\in \widehat{\H}_0(-e_1)$. Observe that by Definition \ref{Foliated_ann}, $\tilde{f}_H=\tilde{f}$ in $B_r(te_1)$ for all $H\in\widehat{\H}_0(-e_1)$. Therefore $f$ is foliated Schwarz symmetric in $\Om_{R,r}$ with respect to $-e_1$ if and only if $\tilde{f}_H=\tilde{f}\;\text{in}\;B_R(0)\setminus\overline{B_r(te_1)}$, i.e., $f_H=f\;\text{in}\;\Om_{R,r}$ for all $H\in \widehat{\H}_0(-e_1)$.
\end{remark}

\section{Existence of Optimizer}\label{existence_section}
In this section, we study the existence and uniqueness of both minimizer and maximizer for \eqref{OurProb_min}-\eqref{OurProb_max}. First, we recall a few properties of rearrangement and an important rearrangement inequality due to Burton \cite{Burton}.

\begin{proposition}\label{rearrangement}
Let $p \in [1, \infty)$ and $f_0 \in L^p(\Om)$.
\begin{enumerate}[(i)]
    \item If $f_1 \in \E(f_0)$, then $f_1^{\pm} \in \E(f_0^{\pm})$.
    \item If $f_1 \in \E(f_0)$, then $\norm{f_1}_p = \norm{f_0}_p$.
    \item Let $h \in L^{\p}(\Om)$. Then there exists $f_1, f_2 \in \E(f_0)$ such that 
\begin{align*}
    \dis f_1(x) h(x) \, \dx \le \dis f(x) h(x) \, \dx \le \dis f_2(x) h(x) \, \dx, \quad \forall \, f \in  \overline{\E(f_0)},
\end{align*}
where $\overline{\E(f_0)}$ is the weak closure of $\E(f_0)$ in $L^p(\Om)$.
\end{enumerate}

\end{proposition}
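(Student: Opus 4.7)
My plan is to handle the three parts separately, since (i) and (ii) are soft consequences of the definition of $\E(f_0)$ in terms of distribution functions, while (iii) is the real content and is exactly Burton's rearrangement inequality.

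For (i), I would compute distribution functions directly. For $t\ge 0$ one has $\{f_1^+>t\}=\{f_1>t\}$ and $\{f_0^+>t\}=\{f_0>t\}$, so equimeasurability of $f_1,f_0$ gives $|\{f_1^+>t\}|=|\{f_0^+>t\}|$; for $t<0$ both sides equal $|\Om|$ since $f_i^+\ge 0$. Hence $f_1^+\in\E(f_0^+)$. For the negative part I would first verify that $-f_1\in\E(-f_0)$ (this reduces to showing that $|\{f>t\}|=|\{g>t\}|$ for all $t$ implies $|\{f<s\}|=|\{g<s\}|$ for all $s$, which follows from $\{f<s\}=\Om\setminus\bigcap_{n}\{f>s-1/n\}$ and countable additivity), and then apply the previous step to conclude $f_1^-=(-f_1)^+\in\E((-f_0)^+)=\E(f_0^-)$.

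For (ii), the layer-cake formula
\begin{equation*}
\int_\Om |f|^p\,\dx = p\int_0^\infty t^{p-1}\,|\{|f|>t\}|\,\dt
\end{equation*}
reduces the claim to the fact that $|f_1|$ and $|f_0|$ are equimeasurable. The latter follows from writing $\{|f|>t\}=\{f>t\}\cup\{-f>t\}$ (a disjoint union for $t\ge 0$) and applying the preceding observation that $-f_1\in\E(-f_0)$.

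For (iii), the strategy is Burton's, and this is the main obstacle. First, by (ii) the set $\overline{\E(f_0)}$ is norm-bounded in $L^p(\Om)$, and one checks it is a weakly compact, convex subset (for $p>1$ this is the Banach--Alaoglu theorem applied to the weak closure of a bounded set in a reflexive space; for $p=1$ one needs uniform integrability of $\E(f_0)$, which comes from equimeasurability with $f_0$). Second, since $h\in L^{p'}(\Om)$, the linear functional $f\mapsto\int_\Om fh\,\dx$ is weakly continuous on $L^p(\Om)$, hence attains its supremum and infimum on $\overline{\E(f_0)}$. By Bauer's maximum principle these extrema are attained at extreme points of $\overline{\E(f_0)}$. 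The nontrivial step, and the heart of the proof, is Burton's characterization that every extreme point of $\overline{\E(f_0)}$ in fact belongs to $\E(f_0)$; for this I would simply cite \cite{Burton} rather than reproduce the argument, since the characterization of extreme points requires a delicate measure-theoretic analysis of level sets. This yields the desired $f_1,f_2\in\E(f_0)$ realizing the extrema.
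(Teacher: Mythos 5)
Your proposal is correct and matches the paper's approach: part (i) by direct manipulation of distribution functions (the paper computes $|\{f_1^->t\}|$ directly via $\{f_i^->t\}=\{f_i<-t\}$, whereas you pass through $-f_1\in\E(-f_0)$, but these are the same calculation), and parts (ii)--(iii) resting on Burton's results. The only difference is that you supply a self-contained layer-cake argument for (ii) and sketch the weak-compactness, convexity, and extreme-point structure underlying (iii), while the paper simply cites \cite[Lemma 2.1 and Lemma 2.4]{Burton} for both.
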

\begin{proof}
 $(i)$ It is enough to show that  for $t \in \R^+$, $|\{x \in \Om: f_1^{-}(x) >t\}| = |\{x \in \Om : f_0^{-}(x) > t\}|$. Let $t \in \R^+$. Then we have $\{ x \in \Om: f_i^-(x) > t \} = \{x \in \Om:f_i(x) < -t \}, \; i=0,1$. Therefore, as $f_1$ is a rearrangement of $f_0$, we get  
\begin{align*}
    |\{ x \in \Om: f_1^-(x) > t \}| = |\Om| - |\{x \in \Om:f_1(x) \ge -t \}|
    & = |\Om| - |\{x \in \Om:f_0(x) \ge -t \}| \\
    & = |\{ x \in \Om: f_0^-(x) > t \}|.
\end{align*}
Thus $f_1^- \in \E(f_0^-)$. In a similar procedure, $f_1^+$ is a rearrangement of $f_0^+$.

 $(ii)$ and $(iii)$ follow from  \cite[Lemma 2.1 and Lemma 2.4]{Burton}.
\end{proof}

The following proposition gives regularity and a gradient estimate of the solutions of \eqref{EVP} that play a crucial role in the existence of minimizer. 

\begin{proposition}\label{regularity} 
Let $p \in (1, \infty),  N\geq p,$ and $\Om$ be a bounded domain. 
\begin{enumerate}[(a)]
    \item Let $g, V \in L^{q}(\Om)$ with $q>\frac{N}{p}$. If $\phi \in \wp$ is a solution of \eqref{EVP}, then $\phi\in C^1(\overline{\Om})$.
    \item Let $ N>p$, and  $g, V \in L^{\frac{N}{p}}(\Om)$. Let $\phi \in \wp$ be a solution of \eqref{EVP}. Then 
\begin{enumerate}[(i)]
    \item $\phi \in L^r(\Om)$ for any $r \in [1, \infty)$.
    \item  there exists $C=C(N,r)>0$ such that $
     \norm{\Gr \phi }_{\frac{Nr(p-1)}{N-r}} \le C \norm{(\la g - V)|\phi|^{p-2}\phi}^{\frac{1}{p-1}}_{r}, \text{  for  } r \in [(p^*)', N).$
\end{enumerate}
\end{enumerate}
\end{proposition}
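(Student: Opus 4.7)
The plan is to rewrite \eqref{EVP} as $-\Dep\phi = f$ with $f := (\la g - V)|\phi|^{p-2}\phi$ and then invoke classical regularity and gradient-estimate results that apply to this right-hand side. The three assertions are handled in three separate steps matching the different hypotheses on $g,V$.

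For part (a), I would first establish $\phi\in L^\infty(\Om)$ via a Moser-type iteration in the spirit of Guedda-V\'eron. Testing the equation with $T_M(\phi)|T_M(\phi)|^{\si(p-1)}$, where $T_M$ is a truncation at level $M$, and combining H\"older's inequality with the Sobolev embedding $\wp\hookrightarrow L^{p^*}(\Om)$, one obtains a recursion whose iteration converges to an $L^\infty$ bound precisely because $q > \frac{N}{p}$ provides the super-critical gap needed for the exponents to diverge geometrically. Once $\phi\in L^\infty(\Om)$, the source $f$ lies in $L^q(\Om)$ with $q > \frac{N}{p}$, and the standard $C^{1,\al}$ regularity theory for quasilinear equations (DiBenedetto and Tolksdorf in the interior, Lieberman up to the boundary of the smooth domain $\Om$) delivers $\phi\in C^{1,\al}(\overline{\Om})\subset C^1(\overline{\Om})$.

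For part (b)(i), I would run the same Moser iteration, but in this critical-integrability regime $g,V\in L^{N/p}(\Om)$ the recursion no longer produces an $L^\infty$ bound. Nevertheless, every finite step remains valid: starting from $\phi\in L^{p^*}(\Om)$ via the Sobolev embedding, each iterate gives $\phi\in L^{r_{k+1}}(\Om)$ in terms of $\phi\in L^{r_k}(\Om)$ with $r_{k+1}$ strictly larger, and the sequence $\{r_k\}$ can be arranged to exceed any prescribed $r<\infty$ in finitely many steps, yielding $\phi\in L^r(\Om)$ for every $r\in[1,\infty)$. For part (b)(ii), I would invoke the Damascelli-Pardo gradient bound directly: test $-\Dep\phi = f$ against $|\phi|^{\si-1}\phi$ for a suitable $\si = \si(r)$, use the chain rule $\Gr(|\phi|^{\si}) = \si|\phi|^{\si-1}\Gr\phi$, and apply the Sobolev inequality to $|\phi|^{(\si+p-1)/p}$; the admissible range $r\in[(p^*)', N)$ keeps all exponents meaningful, and the Sobolev scaling adapted to the $p$-Laplacian produces the claimed gradient integrability exponent $\frac{Nr(p-1)}{N-r}$.

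The main obstacle is the critical-case iteration in part (b)(i): because $g,V$ have exactly the critical integrability $L^{N/p}(\Om)$, no universal smallness of $\norm{g}_{N/p}+\norm{V}_{N/p}$ is available to force each iteration step unconditionally, so the argument must be carried out carefully with truncations $T_M$ and justified via monotone convergence as $M\to\infty$. Once this is in hand, parts (a) and (b)(ii) are essentially cite-and-apply, relying on the Guedda-V\'eron and Damascelli-Pardo results mentioned just before the statement.
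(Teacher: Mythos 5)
Your approach matches the paper's exactly: all three parts are cite-and-apply, invoking Guedda--V\'eron (Propositions 1.2 and 1.3) for the $L^r$/$L^\infty$ bounds, Lieberman for $C^1$ regularity up to the boundary, and Damascelli--Pardo for the gradient estimate, and your sketches of what those results contain are broadly accurate. One small correction to your discussion of the critical case (b)(i): the truncations $T_M$ of $\phi$ only serve to make the test functions admissible and do not by themselves produce a gain in integrability; what actually makes the iteration advance at the critical integrability $g,V\in L^{N/p}(\Om)$ is a Brezis--Kato type decomposition $g=g_1+g_2$, $V=V_1+V_2$ with $g_1,V_1\in L^\infty(\Om)$ and $\norm{g_2}_{N/p}+\norm{V_2}_{N/p}$ arbitrarily small, so that the small-norm pieces can be absorbed on the left and the bounded pieces drive the increase in the exponent.
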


\begin{proof}
(a) Proof follows using \cite[Proposition 1.3]{Guedda} and \cite[Theorem 1]{Lieberman1988}. 

\noi (b) Proof of $(i)$ follows using \cite[Proposition 1.2]{Guedda}, and proof of $(ii)$ follows as a consequence of \cite[Theorem 2.7]{Lucio}.
\end{proof}

Next, we prove a preparatory lemma for Theorem \ref{optimization}.  

\begin{lemma}\label{convergence}
 Let $q,r \in (1, \infty)$. Let $f_n \rightharpoonup f$ in $L^q(\Om)$ and $h_n \rightarrow h$ in $L^{rq'}(\Om)$. Then 
\begin{align*}
    \lim_{n \rightarrow \infty} \dis f_n |h_n|^r = \dis f|h|^r.
\end{align*}
\end{lemma}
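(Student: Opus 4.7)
The plan is to split the difference
\begin{align*}
\int_{\Om} f_n |h_n|^r - \int_{\Om} f |h|^r = \int_{\Om} f_n \bigl(|h_n|^r - |h|^r\bigr) + \int_{\Om} (f_n - f)|h|^r
\end{align*}
and control each term separately. For the second term, I would observe that since $h \in L^{rq'}(\Om)$, we have $|h|^r \in L^{q'}(\Om)$ with $\||h|^r\|_{q'} = \|h\|_{rq'}^r$. Because $f_n \rightharpoonup f$ in $L^q(\Om)$, the duality pairing against $|h|^r$ yields $\int_{\Om} (f_n - f)|h|^r \to 0$ directly from the definition of weak convergence.

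The first term I would bound by H\"older: it is at most $\|f_n\|_q \, \bigl\||h_n|^r - |h|^r \bigr\|_{q'}$. Since weakly convergent sequences are norm-bounded, $\sup_n \|f_n\|_q < \infty$, so it suffices to prove the strong convergence $|h_n|^r \to |h|^r$ in $L^{q'}(\Om)$.

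To establish that, I would use the elementary inequality $\bigl||a|^r - |b|^r\bigr| \le r\bigl(|a|^{r-1} + |b|^{r-1}\bigr)|a-b|$, valid for $r > 1$, and then apply H\"older's inequality with conjugate exponents $\tfrac{r}{r-1}$ and $r$:
\begin{align*}
\bigl\||h_n|^r - |h|^r \bigr\|_{q'}^{q'}
&\le r^{q'} \int_{\Om} \bigl(|h_n|^{r-1} + |h|^{r-1}\bigr)^{q'} |h_n - h|^{q'} \\
&\le r^{q'} \left( \int_{\Om} \bigl(|h_n|^{r-1} + |h|^{r-1}\bigr)^{\frac{rq'}{r-1}} \right)^{\frac{r-1}{r}} \left( \int_{\Om} |h_n - h|^{rq'} \right)^{\frac{1}{r}}.
\end{align*}
Since $h_n \to h$ in $L^{rq'}(\Om)$, the sequence $(h_n)$ is bounded in $L^{rq'}(\Om)$, so the first factor stays bounded (use $(a+b)^{rq'/(r-1)} \le C(a^{rq'/(r-1)} + b^{rq'/(r-1)})$ and note $(r-1)\cdot \tfrac{rq'}{r-1} = rq'$), while the second factor tends to $0$.

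The only step requiring real care is the last one, where one must match the H\"older exponents to the integrability of $(h_n)$ in $L^{rq'}$; the rest is bookkeeping. Once $|h_n|^r \to |h|^r$ in $L^{q'}(\Om)$ is secured, combining the two estimates concludes the proof.
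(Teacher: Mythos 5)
Your decomposition of the difference into $(f_n-f)|h|^r$ and $f_n(|h_n|^r-|h|^r)$ is exactly the one the paper uses, and the treatment of $(f_n-f)|h|^r$ via $|h|^r\in L^{q'}(\Om)$ and weak convergence is identical. Where you diverge is the crux: showing $|h_n|^r\to|h|^r$ in $L^{q'}(\Om)$. The paper argues qualitatively, observing that $h_n\to h$ in $L^{rq'}(\Om)$ gives $\norm{|h_n|^r}_{q'}\to\norm{|h|^r}_{q'}$ together with a.e.\ convergence along a subsequence, and then invokes the ``norm convergence plus a.e.\ convergence implies $L^{q'}$ convergence'' principle (Riesz--Scheff\'e / Brezis--Lieb). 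You instead go quantitatively, using the pointwise bound $\bigl||a|^r-|b|^r\bigr|\le r\bigl(|a|^{r-1}+|b|^{r-1}\bigr)|a-b|$ (valid since $r>1$) and H\"older with exponents $\frac{r}{r-1}$ and $r$, which produces an explicit estimate $\norm{|h_n|^r-|h|^r}_{q'}\le C\,\norm{h_n-h}_{rq'}$ with $C$ depending only on $r$ and $\sup_n\norm{h_n}_{rq'}$. Your route is slightly longer but more elementary: it is fully self-contained, gives a rate, and avoids the minor logical wrinkle in the paper of passing to a subsequence for a.e.\ convergence while asserting convergence of the whole sequence (which in the paper implicitly needs a ``subsequence of every subsequence'' argument). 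Both are correct; your version is arguably the cleaner write-up for a reader who does not want to recall the Scheff\'e-type lemma.
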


\begin{proof}
 Let $\ep > 0$ be given. For each $n \in \N$, we have
\begin{align*}
    \abs{\left(f_n|h_n|^r - f|h|^r \right)} \le |f_n-f||h|^r + |f_n|\abs{ \left(|h_n|^r-|h|^r \right)}.
\end{align*}
Since $f_n \rightharpoonup f$ in $L^q(\Om)$ and $|h|^r \in L^{q'}(\Om)$, there exists $n_1 \in \N$ such that
\begin{align}\label{cnvg1}
   \dis |f_n-f||h|^r < \ep, \quad \forall \, n \ge n_1.
\end{align}
Next, since $h_n \rightarrow h$ in $L^{rq'}(\Om)$, we get $\norm{|h_n|^r}_{q'} \ra  \norm{|h|^r}_{q'}$ and up to subsequence $|h_n|^r \rightarrow |h|^r$ a.e. in $\Om$. Hence $|h_n|^r \ra |h|^r$ in $L^{q'}(\Om)$. Therefore, there exists $n_2 \in \N$ such that
\begin{align}\label{cnvg2}
    \dis |f_n|\abs{ \left(|h_n|^r-|h|^r \right)} \le \norm{f_n}_q \norm{ \left( |h_n|^r-|h|^r \right)}_{q'} < C \ep, \quad \forall \, n \ge n_2. 
\end{align}
The last inequality uses the fact that $(f_n)$ is bounded in $L^q(\Om)$.
From \eqref{cnvg1} and \eqref{cnvg2}, we conclude that $\int_{\Om} f_n |h_n|^r \rightarrow \int_{\Om} f|h|^r.$
\end{proof}

\noi \textbf{Proof of Theorem \ref{optimization}:} By the hypothesis, 
\begin{equation}\label{assumption}
\begin{aligned}
g_0, V_0 \in X:=\left\{ \begin{array}{ll}
L^{\frac{N}{p}}(\Om), &  \text{ if }  N > p; \vspace{0.1 cm} \\
             L^q(\Om); q \in (1, \infty),  &  \text{ if }  N\leq p, 
              \end{array}\right. g_0^+ \not \equiv 0, \text{ and } \norm{V_0^{-}}_{X} \le \displaystyle \frac{1-\de_0}{S^p}. \\
\end{aligned}
\end{equation}
$(i)$ \textbf{Existence of minimizer:} Let $N>p$. Recall that 
\begin{align*}
    \La_{\min}(g_0, V_0)=  \inf \left\{ \La(g,V): g \in \E(g_0), V \in \E(V_0) \right\},
\end{align*}
where $\E(g_0)$  and $\E(V_0)$ are the  set of all rearrangements of $g_0$ and $V_0$ respectively.
Let $(g_n), (V_n)$ be minimizing sequences in $\E(g_0), \E(V_0)$ such that  
\begin{align}\label{min}
    \La_{\min}(g_0, V_0) = \lim_{n \ra \infty} \La(g_n,V_n). 
\end{align}
For brevity, we denote $\La(g_n,V_n)$ as $\La_n$. For each $n \in \N$, using Proposition \ref{rearrangement}-$(i)$, we see that $g_n, V_n$ satisfies all the assumptions as given in \eqref{assumption}. Therefore, applying Theorem \ref{Existence1}, we get
\begin{align}\label{min0}
    \La_n = \frac{\int_{\Om} |\Gr \phi_n|^p + V_n \phi_n^p } { \int_{\Om} g_n \phi_n^p},
\end{align}
where $\phi_n$ is an eigenfunction of \eqref{EVP} corresponding to $\La_n$, $\phi_n>0$ in $\Om$, and $\int_{\Om} g_n \phi_n^p>0$. For $r \in ((p^*)', \frac{N}{p})$, we set $r_1 = \frac{Nr(p-1)}{N-pr}$. Using Proposition \ref{regularity} ($(i)$ of (b)), $(\phi_n) \subset L^{r_1}(\Om)$. It is easy to see that $\Phi_n := \frac{\phi_n}{\norm{\phi_n}_{r_1}}$ is also a positive eigenfunction of \eqref{EVP} corresponding to $\La_n$ normalized as $\norm{\Phi_n}_{r_1} = 1$. Moreover, from \eqref{min} and \eqref{min0},
\begin{align}\label{min1}
    \La_{\min}(g_0, V_0) = \lim_{n \ra \infty} \frac{\int_{\Om} |\Gr \Phi_n|^p + V_n \Phi_n^p} { \int_{\Om} g_n \Phi_n^p}.
\end{align}
Now we show that $(\Phi_n)$ is bounded in $W_{0}^{1, r_2}(\Om)$, where $r_2=\frac{Nr(p-1)}{N-r}>p$. For each $n \in \N$, using Proposition \ref{regularity} ($(ii)$ of (b)), we have the following gradient estimate: 
\begin{align}\label{min2}
    \norm{ \Gr \Phi_n}_{r_2} \le C \norm{(\La_n g_n-V_n)\Phi_n}_r.
\end{align}
We apply the H\"{o}lder's inequality with the conjugate pair $(\frac{N}{pr}, \frac{N}{N-pr})$ to get
\begin{align*}
    \norm{(\La_n g_n-V_n)\Phi_n^{p-1}}^{\frac{1}{p-1}}_r & \le \left( \dis |\La_n g_n-V_n|^{\frac{N}{p}} \right)^{\frac{p}{N(p-1)}} \left( \dis \Phi_n^{r_1} \right)^{\frac{1}{r_1}} \\
   & \le \norm{\La_n g_n - V_n}_{\frac{N}{p}}^{\frac{1}{p-1}} \norm{\Phi_n}_{r_1} = \norm{\La_n g_n - V_n}_{\frac{N}{p}}^{\frac{1}{p-1}}.
\end{align*}
Since $(g_n, V_n) \in \E(g_0) \times \E(V_0)$,  it follows that $\norm{\La_n g_n - V_n}_{\frac{N}{p}} \le \La_n \norm{g_0}_{\frac{N}{p}} + \norm{V_0}_{\frac{N}{p}} \le C$. Therefore, from \eqref{min2}, the sequence $(\norm{ \Gr \Phi_n}_{r_2})$ is bounded. Also, since $r_2<r_1$ and $\Om$ is bounded, we infer that $(\Phi_n)$ is bounded in $L^{r_2}(\Om).$ Thus the sequence $(\Phi_n)$ is bounded in $W_{0}^{1, r_2}(\Om)$. By the reflexivity of $W_{0}^{1, r_2}(\Om)$, there exists a subsequence $(\Phi_{n_k})$ such that $\Phi_{n_k} \rightharpoonup \uphi$ in $W_{0}^{1, r_2}(\Om)$. Since $r_2^*>p^*$, $W_{0}^{1, r_2}(\Om)$ is compactly embedded into $L^{p^*}(\Om)$. Therefore, $\Phi_{n_k} \ra \uphi$ in $L^{p^*}(\Om)$ and $\uphi \ge 0$  in $\Om$. Further, the sequences $(g_{n_k})$ and $(V_{n_k})$ are bounded in $L^{\frac{N}{p}}(\Om)$. By the reflexivity of $L^{\frac{N}{p}}(\Om)$, up to a subsequence $g_{n_k} \rightharpoonup \tilde{g}$ and $ V_{n_k} \rightharpoonup \tilde{V}$ in $L^{\frac{N}{p}}(\Om)$. Hence using Lemma \ref{convergence}, we get
\begin{align*}
    \lim_{k \rightarrow \infty} \dis g_{n_k} \Phi_{n_k}^p = \dis \tilde{g} (\uphi)^p \;  \text{ and } \; \lim_{k \rightarrow \infty} \dis V_{n_k} \Phi_{n_k}^p = \dis \tilde{V} (\uphi)^p.
\end{align*}
Therefore, \eqref{min1} and the weak lower semicontinuity of $\norm{\Gr( \cdot)}_p$ yield $$\La_{\min}(g_0, V_0) \ge \frac{\int_{\Om} |\Gr \uphi|^p + \tilde{V} (\uphi)^p} { \int_{\Om}  \tilde{g} (\uphi)^p}, \text{ where } (\tilde{g}, \tilde{V}) \in \overline{\E(g_0)} \times \overline{\E(V_0)}.$$
Furthermore, from Proposition \ref{rearrangement}-$(iii)$ there exists $(\ug ,\uv) \in \E(g_0) \times \E(V_0)$ such that  $\int_{\Om} \uv (\uphi)^p \le \int_{\Om} \tilde{V} (\uphi)^p$ and $\int_{\Om} \ug (\uphi)^p \ge \int_{\Om} \tilde{g} (\uphi)^p$. Using these inequalities it follows that
\begin{align*}
    \La_{\min}(g_0, V_0) \ge \frac{\int_{\Om} |\Gr \uphi|^p + \tilde{V} (\uphi)^p} { \int_{\Om}  \tilde{g} (\uphi)^p} \ge \frac{\int_{\Om} |\Gr \uphi|^p + \uv (\uphi)^p} { \int_{\Om} \ug (\uphi)^p} \ge \La_{\min}(g_0, V_0).
\end{align*}
Thus $\La_{\min}(g_0, V_0)$ is attained at $(\ug,\uv) \in \E(g_0) \times \E(V_0)$. For $N \le p$, the existence of minimizer follows from \cite[Theorem 3.4]{Leandro}.

$(ii)$ \textbf{Existence of maximizer:} 
Recall that
$$\La_{\max}(g_0,V_0) :=  \sup \left\{ \La(g,V): (g, V) \in \E(g_0) \times \E(V_0) \right\}.$$
Let $(g_n) \subset \E(g_0)$ and $(V_n) \subset \E(V_0)$ be maximizing sequences, i.e.,
\begin{align}\label{max}
      \La_{\max}(g_0,V_0) = \lim_{n \ra \infty} \La(g_n,V_n) = \lim_{n \ra \infty} \frac{\int_{\Om} |\Gr \phi_n|^p + V_n \phi_n^p} { \int_{\Om} g_n \phi_n^p},
\end{align}
where $\phi_n$ is a positive eigenfunction of \eqref{EVP} corresponding to $\La(g_n,V_n)$ (by Proposition \ref{rearrangement}-$(i)$ and Theorem \ref{Existence1}). As before, we denote $\La(g_n,V_n)$ as $\La_n$. Since the sequences $(g_n)$ and $(V_n)$ are bounded in $X$, by the reflexivity of $X$, up to a subsequence $g_n \rightharpoonup g$ and $V_n \rightharpoonup V$ in $X$. Now $\int_{\Om} g_n f \rightarrow \int_{\Om} gf, \; \forall \, f \in X'$, where $X'$ is the dual of $X$. Further, since $g_n \in \E(g_0)$ and $g_0 \geq 0$, it follows from Proposition \ref{rearrangement}-$(ii)$ that $\int_{\Om} g_n = \int_{\Om} g_0.$  Now, by taking $f=1$, we obtain 
\begin{align*}
    \int_{\Om} g = \lim_{n \ra \infty} \int_{\Om} g_n = \int_{\Om} g_0 > 0.
\end{align*}
Therefore, $g^+ \not \equiv 0$ on a set of positive measure. Also, from the weak lower semicontinuity of $\norm{\cdot}_{X}$, 
\begin{align*}
    \norm{V^-}_{X} \le \norm{V}_{X} \le \liminf_{n \ra \infty} \, \norm{V_n}_{X} \le \frac{1-\de_0}{S^p}.
\end{align*}
Thus $g,V$ satisfies all the assumptions in \eqref{assumption}, and by Theorem \ref{Existence1}, there exists an eigenfunction $\ophi$ of \eqref{EVP} corresponding to $\La(g,V)$. Now we write
\begin{align}\label{max1}
      \La_n = \frac{\int_{\Om} |\Gr \phi_n|^p + V_n \phi_n^p } { \int_{\Om} g_n \phi_n^p}  \le \frac{\int_{\Om} |\Gr \ophi|^p + V_n (\ophi)^p} { \int_{\Om} g_n (\ophi)^p} 
       = \La(g,V) \frac{\int_{\Om} g (\ophi)^p}{\int_{\Om} g_n (\ophi)^p} + \frac{\int_{\Om} (V_n-V) (\ophi)^p } { \int_{\Om} g_n (\ophi)^p}.
\end{align}
From the Sobolev embedding $\wp \hookrightarrow Y$, where $Y=L^{p^*}(\Om)$ (if $N>p$) and $Y = L^{pq'}(\Om)$ (if $N \le p$), we have $(\ophi)^p \in X'$. Therefore, $\int_{\Om} (V_n-V) (\ophi)^p \ra 0$ and $\int_{\Om} (g_n-g) (\ophi)^p \ra 0$, as $n \ra \infty$. Now using \eqref{max1}, we obtain $$\limsup_{n \ra \infty} \La_n \le \La(g,V).$$ Therefore, $ \La_{\max}(g_0,V_0) \leq \La(g,V),$ where $g \in \overline{\E(g_0)}$ and $V \in \overline{\E(V_0)}$. Further, from the rearrangement inequality (Proposition \ref{rearrangement}-$(iii)$) there exists $(\og ,\ov) \in \E(g_0) \times \E(V_0)$ such that  $\int_{\Om} \ov (\ophi)^p \ge \int_{\Om} V (\ophi)^p$ and $\int_{\Om} \og (\ophi)^p \le \int_{\Om} g (\ophi)^p$. Therefore, 
\begin{align*}
      \La_{\max}(g_0,V_0) \le \frac{\int_{\Om} |\Gr \ophi|^p + V (\ophi)^p } { \int_{\Om} g (\ophi)^p} \le \frac{\int_{\Om} |\Gr \ophi|^p + \ov (\ophi)^p } { \int_{\Om} \og (\ophi)^p} \le  \La_{\max}(g_0,V_0).
\end{align*}
Thus $\La_{\max}(g_0,V_0)$ is attained at $(\og, \ov) \in \E(g_0) \times \E(V_0)$. 
\qed

\begin{remark}
Notice that, in order to get the existence of optimizers, we have used the relexivity of the space $X$. Naturally, when $X = L^1(\Om)$, the above procedure fails due to the lack of reflexivity.
\end{remark}

In the following proposition, we give a characterization of minimizers $\ug$ and $\uv$.
\begin{proposition}\label{characterization}
Let $g_0, V_0$ be as given in Theorem \ref{optimization}. Let $(\uphi, \ug, \uv)$ be an optimal triple. Then there exists an increasing function $F : \R \mapsto \R$ and a decreasing function $G : \R \mapsto \R$ such that 
\begin{align*}
    \ug= F \circ \uphi \text{  and  }  \uv = G \circ \uphi \; \text{ in} \; \Om.
\end{align*}
\end{proposition}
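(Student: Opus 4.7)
The plan is to decouple the joint minimization over $\E(g_0) \times \E(V_0)$ into two one-sided rearrangement problems tied to the eigenfunction $\uphi$, and then to invoke the classical Burton characterization of extremal rearrangements.

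First I would show that $\ug$ is a maximizer of the linear functional $g \mapsto \int_\Om g\,\uphi^p$ on $\E(g_0)$. Fix any $g \in \E(g_0)$. By Proposition~\ref{rearrangement}-$(i)$ the pair $(g,\uv)$ still satisfies assumption \eqref{A}, so $\La(g,\uv)$ is well-defined and positive, and minimality of $(\ug,\uv)$ gives $\La(\ug,\uv) \le \La(g,\uv)$. When $\int_\Om g\,\uphi^p \le 0$ the desired inequality is trivial since $\int_\Om \ug\,\uphi^p > 0$; otherwise I would use $\uphi$ as a test function in the Rayleigh quotient for $\La(g,\uv)$ and recall that $\uphi$ itself achieves the Rayleigh quotient defining $\La(\ug,\uv)$:
\[
\frac{\int_\Om |\Gr \uphi|^p + \uv \uphi^p}{\int_\Om \ug\,\uphi^p} \;=\; \La(\ug,\uv) \;\le\; \La(g,\uv) \;\le\; \frac{\int_\Om |\Gr \uphi|^p + \uv \uphi^p}{\int_\Om g\,\uphi^p}.
\]
Cancelling the common positive numerator yields $\int_\Om g\,\uphi^p \le \int_\Om \ug\,\uphi^p$. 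Running the identical chain of inequalities symmetrically, with $\ug$ fixed and $V \in \E(V_0)$ varying, I would conclude that $\uv$ minimizes $V \mapsto \int_\Om V\,\uphi^p$ on $\E(V_0)$.

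Next I would appeal to Burton's theorem \cite{Burton}: any maximizer of a linear functional $g \mapsto \int_\Om g\,h$ against a fixed nonnegative $h$ over a rearrangement class is comonotone with $h$, and in fact equals $F\circ h$ for some increasing $F:\R\to\R$ on the complement of the union of level sets of $h$ of positive measure. Applied with $h = \uphi^p$ (a legitimate measurable transform of $\uphi$ since $\uphi\ge 0$), this gives $\ug = F \circ \uphi$ on that complement, and analogously $\uv = G\circ \uphi$ for some decreasing $G$.

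The main obstacle is to push the identity onto the (possibly nonempty) level sets of $\uphi$ of positive Lebesgue measure, on which Burton's characterization is silent: any rearrangement of $\ug$ inside such a set still solves the extremal problem isolated in Step~1, so a priori $\ug$ need not even be constant---let alone a function of $\uphi$---there. To upgrade the conclusion to the global identity $\ug = F\circ\uphi$ in $\Om$, I would lean on the regularity of the first eigenfunction granted by Proposition~\ref{regularity} together with the fact that a first eigenfunction of the weighted $p$-Laplacian in a smooth bounded domain has level sets of Lebesgue measure zero at every positive level (via a strong maximum principle or unique-continuation type argument, cf.\ Proposition~\ref{STM}). Once this is in hand, Burton's characterization applies on all of $\Om$ and the monotone dependence follows.
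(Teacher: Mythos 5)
Your first two steps are correct and match the structure of the argument in Del Pezzo--Fern\'andez Bonder \cite[Theorem 3.5]{Leandro}, which is the proof the paper actually cites: the Rayleigh-quotient sandwich does show that $\ug$ maximizes $g\mapsto\int_\Om g\,\uphi^p$ over $\E(g_0)$ and $\uv$ minimizes $V\mapsto\int_\Om V\,\uphi^p$ over $\E(V_0)$. The gap is in the third step. To pass from ``$\ug$ is comonotone with $\uphi^p$ off the flat level sets'' to ``$\ug=F\circ\uphi$ everywhere,'' you invoke a claim that the level sets $\{\uphi=c\}$, $c>0$, are Lebesgue-null, and propose to get this from ``a strong maximum principle or unique-continuation type argument, cf.\ Proposition~\ref{STM}.'' This does not work: Proposition \ref{STM} only yields $\uphi>0$ a.e., it says nothing about flat regions, and unique continuation for the $p$-Laplacian with $L^{N/p}$ coefficients (which is all that \eqref{A} provides) is delicate and not something you can wave at. You have not actually ruled out level sets of positive measure.

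More importantly, that detour is unnecessary. Burton's theorem states that the maximizer of $\int fh$ over $\E(f_0)$ is of the form $\phi\circ h$ for an increasing $\phi$ if and only if the maximizer is \emph{unique}; the correct and elementary way to get uniqueness here is via the Euler--Lagrange equation, not a level-set analysis. Concretely: suppose $g^*\in\E(g_0)$ is any maximizer of $\int_\Om g\,\uphi^p$. Your own chain of inequalities then forces $\La(g^*,\uv)=\La_{\min}(g_0,V_0)$ with $\uphi$ realizing the Rayleigh quotient for $(g^*,\uv)$, hence $\uphi$ satisfies $-\De_p\uphi+\uv\uphi^{p-1}=\La_{\min}\,g^*\,\uphi^{p-1}$ weakly. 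Subtracting the equation for $(\ug,\uv)$ gives $\La_{\min}(\ug-g^*)\uphi^{p-1}=0$ a.e., and since $\La_{\min}>0$ and $\uphi>0$ a.e.\ (Theorem~\ref{Existence1}), we get $g^*=\ug$. Uniqueness established, Burton applies directly, and the flat sets of $\uphi$ never need to be discussed (indeed $\ug$ is automatically forced to be constant on them). The identical argument with $-V$ in place of $g$ gives uniqueness of the minimizer $\uv$ and hence $\uv=G\circ\uphi$ with $G$ decreasing.
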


\begin{proof}
Proof follows using Theorem \ref{optimization}-$(i)$ and the similar set of arguments as given in \cite[Theorem 3.5]{Leandro}.
\end{proof}

Next, we study the uniqueness for the maximization problem \eqref{OurProb_max}. In \cite[Theorem 4.4]{Cuccu}, authors proved the uniqueness of maximizer when $\Om$ is a ball and $g_0\in L^\infty(\Om)$ is nonnegative. Here we establish the uniqueness for more general domains $\Om$ and nonnegative $g_0\in X$ ($X$ is as in \eqref{A}) by extending ideas of the preceding paper. In order to get this, we derive the weak continuity of the map $g\mapsto \La(g,0)$ in $X$. For brevity, we denote $\La(f)=\La(f,0)$ for a function $f$.

\begin{proposition}
Let $g_0$ satisfies \eqref{A}, $g_0 \ge 0$ and $V_0 = 0$. Then the following holds:
\begin{enumerate}[(i)]
    \item Let $\overline{\E(g_0)}$ be the weak closure of $\E(g_0)$ in $X$. Then the map $g \mapsto \La(g)$ is continuous on $\overline{\E(g_0)}$, i.e., for every sequence $(g_n)$ in $\overline{\E(g_0)}$ if $g_n \rightharpoonup g$ in $X$, then $\La(g_n) \ra \La(g)$,
    \item There exists a unique maximizer of $\La_{\max}(g_0,0)$.
\end{enumerate}
\end{proposition}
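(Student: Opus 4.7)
The plan is to prove~(i) via a standard $\limsup/\liminf$ argument that reuses the higher-integrability and gradient estimates obtained inside the proof of Theorem~\ref{optimization}, and then to deduce~(ii) from~(i) by combining convexity of $g\mapsto 1/\La(g)$ with the simplicity of the first eigenvalue.

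For the $\limsup$ step in~(i), I would fix any admissible $\psi\in\wp$ with $\int_\Om g\psi^p>0$. Since $\psi^p\in X'$ and $g_n\wra g$ in $X$, one has $\int_\Om g_n\psi^p\to\int_\Om g\psi^p$, and the Rayleigh-quotient bound $\La(g_n)\le\int_\Om|\Gr\psi|^p/\int_\Om g_n\psi^p$ yields $\limsup_n\La(g_n)\le\int_\Om|\Gr\psi|^p/\int_\Om g\psi^p$. Taking the infimum over $\psi$ gives $\limsup_n\La(g_n)\le\La(g)$; in particular, the sequence $(\La(g_n))$ is bounded above. For the $\liminf$ step, let $\phi_n>0$ be a first eigenfunction for $\La(g_n)$ normalized by $\|\phi_n\|_{r_1}=1$ for some $r_1>p$, exactly as in the proof of Theorem~\ref{optimization}-(i). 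Proposition~\ref{regularity} supplies a uniform bound on $\|\Gr\phi_n\|_{r_2}$ with $r_2>p$, so that, up to a subsequence, $\phi_n\wra\phi$ in $\wp$ and $\phi_n\to\phi$ in $L^{p^*}(\Om)$. Lemma~\ref{convergence} then yields $\int_\Om g_n\phi_n^p\to\int_\Om g\phi^p$. To see this limit is strictly positive, note that Sobolev embedding gives $\|\Gr\phi_n\|_p^p\ge S^{-p}\|\phi_n\|_{r_1}^p=S^{-p}$, so $\int_\Om g_n\phi_n^p=\|\Gr\phi_n\|_p^p/\La(g_n)$ is bounded below thanks to the just-proved upper bound on $\La(g_n)$. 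Combined with weak lower semicontinuity of the Dirichlet energy, this gives $\La(g)\le\int_\Om|\Gr\phi|^p/\int_\Om g\phi^p\le\liminf_n\La(g_n)$. The case $N\le p$ is simpler, since $\wp$ is compactly embedded into $L^{pq'}(\Om)$ and no regularity upgrade is needed.

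For~(ii), Burton's theorem~\cite{Burton} identifies $\overline{\E(g_0)}$ with the closed convex hull of $\E(g_0)$ in $X$, so given two maximizers $\og_1,\og_2\in\E(g_0)$, the midpoint $g_*:=(\og_1+\og_2)/2$ lies in $\overline{\E(g_0)}$. Part~(i) together with Theorem~\ref{optimization}-(ii) ensures $\sup_{\overline{\E(g_0)}}\La=\La_{\max}(g_0,0)$. Since $1/\La(g)=\sup_{\phi\in\wp}\int_\Om g\phi^p/\int_\Om|\Gr\phi|^p$ is a supremum of functionals linear in $g$, it is convex, which gives $\La(g_*)\ge\La_{\max}$ and hence $\La(g_*)=\La_{\max}$. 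Letting $\phi_*>0$ be the first eigenfunction of $\La(g_*)$ from Theorem~\ref{Existence1}, the chain
\begin{multline*}
\frac{1}{\La_{\max}}=\frac{\int_\Om g_*\phi_*^p}{\int_\Om|\Gr\phi_*|^p}=\frac{1}{2}\frac{\int_\Om\og_1\phi_*^p}{\int_\Om|\Gr\phi_*|^p}+\frac{1}{2}\frac{\int_\Om\og_2\phi_*^p}{\int_\Om|\Gr\phi_*|^p}\\ \le\frac{1}{2\La(\og_1)}+\frac{1}{2\La(\og_2)}=\frac{1}{\La_{\max}}
\end{multline*}
must be an equality throughout, so $\phi_*$ is also a first eigenfunction for $\og_1$ and $\og_2$ with eigenvalue $\La_{\max}$. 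Writing the two Euler--Lagrange identities $-\Dep\phi_*=\La_{\max}\,\og_i\,\phi_*^{p-1}$ for $i=1,2$ and subtracting yields $(\og_1-\og_2)\phi_*^{p-1}=0$ a.e.~in $\Om$, which together with $\phi_*>0$ in $\Om$ forces $\og_1=\og_2$.

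The main obstacle is the $\liminf$ step of~(i): weak convergence of $(\phi_n)$ in $\wp$ is not enough on its own to pass to the limit in $\int_\Om g_n\phi_n^p$ or to prevent $\phi$ from degenerating. Lifting $\phi_n$ into $W_0^{1,r_2}(\Om)$ with $r_2>p$ via the Damascelli--Pardo gradient estimate of Proposition~\ref{regularity} is what produces a compact embedding into $L^{p^*}(\Om)$ and makes Lemma~\ref{convergence} applicable to the pair $(g_n,\phi_n)$.
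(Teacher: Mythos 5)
Your proof follows the paper's structure closely: part~(i) mirrors the $\limsup/\liminf$ scheme from the proof of Theorem~\ref{optimization}-(i), and part~(ii) uses convexity of $g\mapsto 1/\La(g)$ plus the Euler--Lagrange equations, exactly as the paper does in Steps~2--3. Part~(ii) is correct; in fact your version is slightly leaner, since you invoke Theorem~\ref{optimization}-(ii) together with weak density and the continuity from part~(i) to get $\sup_{\overline{\E(g_0)}}\La=\La_{\max}(g_0,0)$ directly, whereas the paper re-establishes attainment over $\overline{\E(g_0)}$ via weak sequential compactness before specializing to $\E(g_0)$.

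There is, however, a concrete error in the $\liminf$ step of part~(i). You claim that the Sobolev embedding gives $\|\Gr\phi_n\|_p^p\ge S^{-p}\|\phi_n\|_{r_1}^p=S^{-p}$. But $S$ is the constant of the embedding $W_0^{1,p}(\Om)\hookrightarrow L^{p^*}(\Om)$, and $r_1=\frac{Nr(p-1)}{N-pr}>p^*$ for every $r\in((p^*)',N/p)$; there is no embedding of $W_0^{1,p}(\Om)$ into $L^{r_1}(\Om)$, so the inequality does not hold. On a bounded domain, $\|\phi_n\|_{r_1}=1$ is perfectly compatible with $\|\Gr\phi_n\|_p$ being arbitrarily small (think of concentrating bumps, which lose mass in $L^{p^*}$ but not in the supercritical $L^{r_1}$). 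This inequality was the mechanism by which you argued that $\int_\Om g_n\phi_n^p$ is bounded away from zero, which in turn was needed to ensure $\int_\Om g\,\uphi^p>0$ so that $\int_\Om|\Gr\uphi|^p/\int_\Om g\,\uphi^p$ is a genuine Rayleigh quotient with value $\ge\La(g)$. As written this step is a gap, and some other argument is needed to preclude $\uphi$ from degenerating. To your credit, the paper itself does not spell out this positivity check and simply refers back to the proof of Theorem~\ref{optimization}-(i), so you identified a real concern; the particular repair you propose, though, is not valid.
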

\begin{proof}
$(i)$ Let $N>p$ and $g_0 \in L^{\frac{N}{p}}(\Om)$. Let $(g_n)$ be a sequence in $\overline{\E(g_0)}$ such that $g_n \wra g$ in $L^{\frac{N}{p}}(\Om)$. We show that $\La(g_n) \ra \La(g)$. For each $n \in \N$, since $g_n \in \overline{\E(g_0)}$, there exists a sequence $(g_{n,m})$ in $\E(g_0)$ such that $g_{n,m} \rightharpoonup g_n$ in $L^{\frac{N}{p}}(\Om)$. Now for every $f \in (L^{\frac{N}{p}}(\Om))'$, we have $\int_{\Om} g_{n,m} f \rightarrow \int_{\Om} g_nf$, as $m \ra \infty$  and $\int_{\Om} g_n f \rightarrow \int_{\Om} gf,$ as $n \ra \infty$. In particular, for $f=1$,
\begin{align*}
  \dis g_n = \lim_{m \ra \infty} \int_{\Om} g_{n,m} =  \int_{\Om} g_0 \; \text{  and  } \; \dis g = \lim_{n \ra \infty} \int_{\Om} g_n = \int_{\Om} g_0.
\end{align*}
Therefore, for each $n \in \N$, $\text{supp}(g_n^+)$ and $\text{supp}(g^+)$ have positive measure. Hence using Theorem \ref{Existence1}, there exist positive eigenfunctions $\phi_n$ and $\phi $ of \eqref{EVP} corresponding to $\La(g_n)$ and $\La(g)$, respectively. Further,
\begin{align*}
      \La(g_n) = \frac{\int_{\Om} |\Gr \phi_n|^p} { \int_{\Om} g_n \phi_n^p}  \le \frac{\int_{\Om} |\Gr \phi|^p} { \int_{\Om} g_n \phi^p}  = \La(g) \frac{\int_{\Om} g \phi^p}{\int_{\Om} g_n \phi^p}.
\end{align*}
This yields $\underset{n \ra \infty}{\limsup} \, \La(g_n) \le \La(g)$. On the other hand, following the steps as given in the proof of Theorem \ref{optimization}-$(i)$, we get a sequence $(\Phi_n)$ of eigenfunctions of \eqref{EVP} such that 
 \begin{align}\label{max3}
     \La(g_n) = \frac{\int_{\Om} |\Gr \Phi_n|^p} { \int_{\Om} g_n \Phi_n^p}, \Phi_n \rightharpoonup \uphi \text{ in } \wp, \text{ and } \int_{\Om} g_{n} \Phi_n^p \rightarrow \int_{\Om} g (\uphi)^p.
 \end{align}
Hence \eqref{max3} and the weak lower semicontinuity of $\norm{\Gr( \cdot)}_p$ give
 \begin{align*}
     \liminf_{n \ra \infty} \La(g_n) \ge  \frac{\int_{\Om} |\Gr \uphi|^p} { \int_{\Om}  g (\uphi)^p} \ge \La(g).
 \end{align*}
Thus the sequence $(\La(g_n))$ converges to $\La(g)$. For $N \le p$, proof follows using the similar set of arguments.

$(ii)$ We consider the following maximization problem: 
\begin{align}\label{maxproblem}
    \overline{\La}_{\max}(g_0) = \sup \left\{ \La(g): g \in \overline{\E(g_0)} \right\}.
\end{align} 
\noi \underline{Step 1:} First, we show that the maximizer of \eqref{maxproblem} is attained in $\E(g_0)$. Let $(g_n)$ be a maximizing sequence in $\overline{\E(g_0)}$ such that $\La(g_n) \rightarrow \overline{\La}_{\max}(g_0)$. Since the set  $\overline{\E(g_0)}$ is weakly sequentially compact (by \cite[Lemma 2.2]{Burton}), up to a subsequence $g_n \rightarrow \tilde{g}$ in $\overline{\E(g_0)}$ (i.e., $g_n \wra \tilde{g}$ in $X$). Using the continuity of $g \mapsto \La(g)$, we have $\La(\g) = \underset{n \rightarrow \infty}{\lim} \La(g_n) = \overline{\La}_{\max}(g_0)$. Further, using Proposition \ref{rearrangement}-$(iii)$, there exists $\hat{g} \in \E(g_0)$ such that $\La(\g) \le \La(\hat{g})$. Thus, $\La(\hat{g}) = \overline{\La}_{\max}(g_0)$.   
 
\noi \underline{Step 2:} Next, we claim that the maximizer $\hat{g}$ of \eqref{maxproblem} is unique. One can verify that 
$$\hat{\La}_{\min}(g_0) := \inf \left\{ \frac{1}{\La(g)}: g \in \overline{\E(g_0)} \right\} = \left( \overline{\La}_{\max}(g_0) \right)^{-1}.$$ 
Thus the uniqueness of  maximizer for $\overline{\La}_{\max}(g_0)$ is equivalent to the uniqueness of minimizer for $ \hat{\La}_{\min}(g_0)$. Suppose there exists $g_1, g_2 \in \E(g_0)$ such that $\frac{1}{\La(g_1)} = \frac{1}{\La(g_2)} = \hat{\La}_{\min}(g_0)$. For $t \in (0,1)$, set $f_t = tg_1 + (1-t)g_2$. Since $\overline{\E(g_0)}$ is convex (by \cite[Lemma 2.2]{Burton}), $f_t \in \overline{\E(g_0)}$. Let $\phi_{f_t}, \phi_{g_1},$ and $\phi_{g_2}$ be eigenfunctions of \eqref{EVP} corresponding to $\La(f_t), \La(g_1),$ and $\La(g_2)$. Then 
\begin{align*}
  \hat{\La}_{\min}(g_0) \le \frac{1}{\La(f_t)} = t \frac{\int_{\Om} g_1 \phi_{f_t}^p}{ \int_{\Om} |\Gr \phi_{f_t}|^p} + (1-t) \frac{\int_{\Om} g_2 \phi_{f_t}^p}{ \int_{\Om} |\Gr \phi_{f_t}|^p} & \le t \frac{\int_{\Om} g_1 \phi_{g_1}^p}{ \int_{\Om} |\Gr \phi_{g_1}|^p} + (1-t) \frac{\int_{\Om} g_2 \phi_{g_2}^p}{ \int_{\Om} |\Gr \phi_{g_2}|^p} \\
  &= t \frac{1}{\La(g_1)} + (1-t)\frac{1}{\La(g_2)} = \hat{\La}_{\min}(g_0).
\end{align*}
Hence the equality holds in each of the above inequalities. Therefore, the following equations hold weakly: 
$$-\De_p \phi_{f_t} = \La(g_1)g_1\phi_{f_t}^{p-1}, \text{ and }-\De_p \phi_{f_t} = \La(g_2)g_2\phi_{f_t}^{p-1} \, \text{ in }  \Om.$$ 
From the above identities, it follows that $\La(g_1) g_1 = \La(g_2) g_2$ in $\Om$. Further, $g_0 \in L^1(\Om)$ and using Proposition \ref{rearrangement}-$(ii)$ we have $\int_{\Om} g_1 = \int_{\Om} g_2 = \int_{\Om} g_0 >0$. Therefore, $\La(g_1) = \La(g_2)$ and $g_1 = g_2$ in $\Om$. Thus the minimizer of $\hat{\La}_{\min}(g_0)$ is unique, and the uniqueness of $\hat{g}$ follows immediately.

\noi \underline{Step 3:} From Step 1, we have $$\La_{\max}(g_0)  \le \overline{\La}_{\max}(g_0) = \La(\hat{g}) \le \La_{\max}(g_0).$$
Therefore, using Step 2, it is evident that the maximizer of \eqref{OurProb_max} is unique. 
\end{proof}
\begin{remark}
In general, minimizer of \eqref{OurProb_min} need not be unique (see Remark \ref{nonuniq}). However, when $\Om$ is a ball, there
exists a unique minimizer for \eqref{OurProb_min}; cf. \cite[Theorem 3.3]{Cuccu}.
\end{remark}

\section{Symmetry of minimizers}\label{symmetry_section}
This section is devoted to studying the various symmetry of the minimizers of \eqref{OurProb_min}. First, we state a strong maximum principle due to Brezis and Ponce in \cite[Corollary 4]{Brezis-Ponce}, which will be used in our proof of Theorem \ref{min_thm}.

\begin{proposition}[Strong Maximum Principle]\label{STM}
 Let $O \subset \RN$ be a bounded domain and $V \in L^1_{loc}(O)$ with $V \geq 0$ a.e. in $O$. Assume that $\phi\geq 0$, $V\phi \in L^1_{loc}(O)$ and $\De \phi$ is a Radon measure on $O$. Suppose that the following inequality holds in the sense of distribution:
\begin{align*}
    -\De \phi + V \phi \ge 0.
\end{align*}
Then either $\phi \equiv 0$ or $\phi > 0$ a.e. in $O$.
\end{proposition}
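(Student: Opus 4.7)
The plan is to cite Corollary 4 of Brezis--Ponce directly, since the statement is quoted verbatim from that reference. Should I wish to reprove it from scratch, I would proceed along the following lines.

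The technical obstacle is that neither $V$ nor $\De \phi$ is regular enough to invoke a classical strong maximum principle: $V$ is merely in $L^1_{loc}$ and $\De \phi$ is only a Radon measure, so the usual Harnack inequality or distributional machinery (which typically requires $V \in L^\infty_{loc}$) does not apply directly. The natural substitute is Kato's inequality in its measure-valued form: if $-\De \phi$ is a Radon measure and $F$ is a $C^1$ concave function on $[0, \infty)$ with $F' \geq 0$, then $-\De F(\phi) \geq F'(\phi)(-\De \phi)$ as measures on $O$.

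My approach would be as follows. Rewrite the hypothesis as $-\De \phi = \mu - V\phi$ for some nonnegative Radon measure $\mu$. For $\ep > 0$, apply Kato's inequality with $F(t) = \log(t + \ep)$ to obtain
\begin{equation*}
    -\De \log(\phi + \ep) \ge \frac{\mu - V\phi}{\phi + \ep} \ge -V
\end{equation*}
in the sense of distributions on $O$, where the second inequality uses $\mu \ge 0$ together with $V\phi / (\phi+\ep) \le V$. Equivalently, $u_\ep := -\log(\phi + \ep)$ is a distributional subsolution of $-\De u \le V$. As $\ep \downarrow 0$, the family $u_\ep$ is monotone increasing and converges pointwise to $u := -\log \phi \in [0, +\infty]$ (with the convention $-\log 0 = +\infty$).

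The final step is to invoke a dichotomy for nonnegative distributional subsolutions of $-\De u \le V$ with $V \in L^1_{loc}$: either $u < +\infty$ almost everywhere, in which case $\phi > 0$ a.e.\ in $O$, or $u \equiv +\infty$ a.e., in which case $\phi \equiv 0$. This dichotomy is a variant of the classical fact that a subharmonic function which takes the value $-\infty$ on a set of positive measure must be identically $-\infty$ on the corresponding connected component; here the presence of $V$ as a locally integrable right-hand side is the mild perturbation that has to be absorbed. The main obstacle I foresee is making the limit passage $\ep \downarrow 0$ rigorous — the Kato inequality needs to be verified for $F = \log$, which is singular at $0$, and one has to control both the singular part of $\mu$ and the low integrability of $V\phi$ simultaneously. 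This is precisely the delicate truncation/approximation procedure that constitutes the technical heart of the Brezis--Ponce paper.
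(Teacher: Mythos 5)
Your primary plan --- citing \cite[Corollary 4]{Brezis-Ponce} --- coincides exactly with the paper's treatment: the proposition is stated verbatim from Brezis--Ponce and given no proof. Your supplementary sketch also reflects the substance of the Brezis--Ponce argument (Kato's inequality applied to $\log(\phi+\ep)$, then a dichotomy for the limiting nearly-superharmonic function), modulo the small slip that $u=-\log\phi$ ranges in $(-\infty,+\infty]$, not $[0,+\infty]$.
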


For the rest of this section, we denote $\uphi$ as $\phi.$ \vspace{0.2 cm} \\
\noi \textbf{Proof of Theorem \ref{min_thm}}: $(i)$ Let $H\in\H_0$. By the hypothesis, $\Om = \Om_H, \sigma_H(\Om)\neq \Om, V_0=0$, $g_0$ satisfies \eqref{A} with $  g_0 \ge 0$, and $\ug$ is given in Theorem \ref{optimization}-$(i)$. For simplicity, we set $\La_{\min}(g_0):=\La_{\min}(g_0,0)$.  
From Theorem \ref{optimization}-$(i)$ and Theorem \ref{Existence1}, there exists $\phi \in \hh2$ such that $\phi>0$  in $\Om$ and 
\begin{align} \label{attain_1}
 \displaystyle \La_{\min}(g_0)  =\La(\underline{g})= \frac{\int_{\Om}  |\nabla \phi|^2} {\int_{\Om} \underline{g} \phi^2}.
\end{align}
Using Proposition \ref{pola_bounded}-$(i)$, we see that $\ug_H \in \E(g_0)$. Hence, $\ug_H \ge 0$ and $\ug_H$ satisfies \eqref{A}. Thus, using Theorem \ref{Existence1}, we infer that $\La(\ug_H)$ is achieved. Further, since $\ug_H \in \E(g_0)$, it follows that 
\begin{align} \label{attain_2}
    \La(\underline{g}_H) \geq \La_{\min}(g_0).
\end{align}
Now from the Hardy-Littlewood inequality (Proposition \ref{Hardy_Littlewood}-$(i)$),
\begin{align}\label{Hardy_1}
    \int_{\Om} \underline{g} \phi^2 \le \int_{\Om} \ug_H (\phi_H)^2 ,
\end{align}
where we also used the fact that $ (\phi^2)_H=(\phi_H)^2$ (as $\phi>0$). Furthermore, since $\phi\in \hh2$ and $\phi>0$  in $\Om$, by Proposition \ref{pola_bounded}-$(ii)$,  we have $\phi_H \in \hh2$ and $\norm{\Gr \phi}_2 = \norm{\Gr \phi_H}_2$. Therefore, using \eqref{attain_1},  \eqref{attain_2}, and \eqref{Hardy_1}, we get
\begin{align*}
\La(\ug) = \frac{\int_{\Om}  |\nabla \phi|^2 }{\int_{\Om} \ug \phi^2} 
 \ge \frac{\int_{\Om}  |\Gr \phi_H|^2}{\int_{\Om} \ug_H (\phi_H)^2}
\ge \La(\ug_H) \ge \La(\ug).
\end{align*}
Thus the equality occurs in each of the above inequalities. As a consequence, $\phi$  and $\phi_H$  satisfy the following equations weakly: 
\begin{equation}\label{Eq_H1}
   -\De \phi = \La(\ug) \ug \phi  \text{ in }  \Om, \text{ and }  -\De \phi_H = \La(\ug) \ug_H \phi_H  \text{ in }  \Om.
\end{equation}
Set $w=\phi_H-\phi$. Then $w \ge 0$  in $\Om \cap H$, and from \eqref{Eq_H1}, $w$ satisfies the following equation weakly:
\begin{equation}\label{Diff_eq1}
    \begin{aligned}
  -\De w = \La(\ug) (\ug_H\phi_H-\ug \phi) \text{ in }  \Om\cap H, \; w = 0  \text{ on } \pa(\Om\cap H).
   \end{aligned}
\end{equation}
Moreover, since $g_0\geq 0$, we get $\ug \ge 0$ and hence $\ug_H\phi_H-\ug \phi\geq 0$  in $\Om\cap H$. Therefore, applying the strong maximum principle (Proposition \ref{STM}) and using \eqref{Diff_eq1} we obtain $w>0$ or $w = 0$  in $\Om\cap H$, i.e.,  
\begin{equation}\label{Max_phi}
     \phi_H>\phi \text{ in } \Om\cap H, \; \text{ unless }\; \phi_H=\phi \text{ in }  \Om\cap H.
\end{equation}
Further, since $\sigma_H(\Om)\neq \Om$, using  Proposition \ref{fact_pola_domain}-$(iii)$, there exists $A \subset \Om\cap H$ such that $|A|>0$ and $\sigma_H(A)\subset \Om^c\cap \overline{H}^c$. For $x \in A$, from Definition \ref{pola_def}-(ii), 
$\phi_H(x) = \tilde{\phi}_H(x) = \tilde{\phi}(x) = \phi(x),$ 
%we get $\phi_H(x) = \phi(x),$ 
i.e., $\phi_H=\phi$  in $A$. Therefore, from \eqref{Max_phi}, we must have $\phi_H=\phi$  in $\Om\cap H$, i.e., $\phi\geq \phi\circ\sigma_H$  in $\Om\cap H$. Consequently, we get $\phi_H=\phi$  in $\Om$. Moreover, from \eqref{Diff_eq1}  the conclusion $\ug_H=\ug$  in $\Om$ follows immediately. 
 
$(ii)$ Let $H\in\H_0$ be such that $\sigma_H(\Om) = \Om$. By the hypothesis, $g_0, V_0$ satisfy \eqref{A}, with $g_0, V_0 \ge 0$, and $\ug, \uv$ are given in Theorem \ref{optimization}-$(i)$. Using  Theorem \ref{optimization}-$(i)$ and Theorem \ref{Existence1}, there exists positive $\phi \in \hh2$ such that
\begin{align} \label{attain_3}
 \displaystyle \La_{\min}(g_0, V_0)  =\La(\ug, \uv)= \frac{\int_{\Om}  |\nabla \phi|^2 + \uv \phi^2} {\int_{\Om} \underline{g} \phi^2}.
\end{align}
From Proposition \ref{pola_bounded}-$(i)$ and Remark \ref{Facts_polarization}-$(iii)$, we obtain $(\ug_H, \uv^H) \in \E(g_0)\times \E(V_0)$. Hence, $\ug_H, \uv^H \ge 0$,  and  $\ug_H, \uv^H $ satisfy \eqref{A}. Therefore, by Theorem \ref{Existence1}, $\La(\ug_H, \uv^H)$ is achieved. Further, from the Hardy-Littlewood inequality (Proposition \ref{Hardy_Littlewood}-$(i)$), reverse Hardy-Littlewood inequality (Proposition \ref{Hardy_Littlewood}-$(ii)$) and using Proposition \ref{pola_bounded}-$(ii)$, we obtain
\begin{align*}
    \int_{\Om} \underline{g} \phi^2 \le \int_{\Om} \ug_H (\phi_H)^2 , \quad \dis \uv \phi^2 \ge \dis \uv^H \phi_H^2, \quad \norm{\Gr \phi}_2 = \norm{\Gr \phi_H}_2.
\end{align*}
Therefore, \eqref{attain_3} yields
\begin{align*}
    \La_{\min}(g_0,V_0) = \frac{\int_{\Om}  |\nabla \phi|^2 + \uv \phi^2} {\int_{\Om} \ug \phi^2} \ge \frac{\int_{\Om}  |\nabla \phi_H|^2 + \uv^H (\phi_H)^2} {\int_{\Om} \ug_H (\phi_H)^2} \ge \La(\ug_H, \uv^H) \ge \La_{\min}(g_0,V_0).
\end{align*}
Since equality occurs in each of the above inequalities, the following equations hold weakly:
\begin{equation}\label{Eq_H2}
  -\De \phi + \uv \phi = \La(\ug, \uv) \ug \phi, \, \text{ and } -\De \phi_H + \uv^H \phi_H = \La(\ug, \uv) \ug_H \phi_H  \text{ in }  \Om.
\end{equation}
As before, we set $w = \phi_H - \phi$ and using \eqref{Eq_H2} see that $w \in \hh2$ satisfies the following equation weakly:
\begin{equation}\label{Diff_eq2}
  -\De w + \uv w \ge -\De w + (\uv^H\phi_H - \uv \phi) \ge 0  \text{ in } \Om\cap H, \; w = 0 \text{ on } \pa(\Om\cap H).
\end{equation}
Further, $\int_{\Om \cap H} \uv w  \le \left( \int_{\Om \cap H} \uv \right)^{\frac{1}{2}}  \left( \int_{\Om \cap H} \uv w^2 \right)^{\frac{1}{2}} < \infty.$
Therefore, by Proposition \ref{STM}, we conclude that either $\phi_H>\phi$ or $\phi_H=\phi$  in  $\Om\cap H.$ Now we consider these two possibilities separately:

 $(a)$ Let $\phi_H=\phi$  in  $\Om\cap H$, i.e.,  $\phi \circ \sigma_H \le \phi$ in $\Om \cap H$. Then using Proposition \ref{characterization}, we get \begin{align*}
     &\ug \circ \sigma_H = F \circ (\phi \circ \sigma_H) \le F \circ \phi = \ug \text{ in }  \Om \cap H. \\
     & \uv \circ \sigma_H = G \circ (\phi \circ \sigma_H) \ge G \circ \phi = \uv \text{ in }  \Om \cap H.
 \end{align*}
 Therefore,  $ \phi_H=\phi$, $\ug_H=\ug,$ and $ \uv^H=\uv$ in $\Om$. 
 
 $(b)$ If $\phi_H > \phi$  in  $\Om\cap H$, i.e., $\phi \circ \sigma_H>\phi$  in $\Om\cap H$, then using Proposition \ref{characterization}, $\ug \circ \sigma_H > \ug$ and $\uv \circ \sigma_H < \uv$  in $\Om \cap H$. Therefore, we get $\phi^H=\phi, \ug^H=\ug,$ and $\uv^H = \uv \circ \sigma_H$  in $\Om$. Further, using Definition \ref{pola_def}-(ii), it follows that $\uv_H=\uv$ in $\Om$. \\
 Combining both possibilities, we complete the proof.
\qed
 
\begin{remark}[Radiality on ball] \label{Cuccus_rslt}
%  In this remark we discuss the radiality of $\ug$ and $\phi$ on
If $\Om=B_1(0)$, then we  have $\Om_H=\Om$ and $\sigma_H(\Om)\neq \Om$ for every $H\in \H(0)$. Therefore, by Theorem \ref{min_thm}-$(i)$,  $\phi_H=\phi$ and $\ug_H=\ug$ in $\Om$ for all $H\in \H(0)$. Hence from Proposition \ref{Schwarz_char}, we conclude that $\phi$ and $\ug$ are radial and radially decreasing on $\Om$. For $1<p <\infty$, this result has been proved in \cite{Cuccu} with $V_0=0$ and in \cite{Prajapat} with $V_0\geq 0$. Here we recover  the same result for $p=2$ and $V_0=0$, using the polarization invariance structure of a minimizing weight and the associated first eigenfunctions.
\end{remark}

\begin{proposition}
Let $\Om$ be a bounded domain containing 0, $g_0=\chi_E$, where $E\subsetneq\Om$ with $0<|E|<|\Om|$ and $V_0=0$. If $\Om=B_R(0)$ for some $R>0$, then $\ug=\chi_{B_r(0)}$ for some $r>0$ such that $|E|=|B_r(0)|$. Furthermore, the converse is also true.
\end{proposition}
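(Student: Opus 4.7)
The forward implication is immediate from Remark \ref{Cuccus_rslt}: on $\Om=B_R(0)$ with $V_0=0$, any minimizer $\ug$ is radial and radially decreasing; as an element of $\E(\chi_E)$ it is a characteristic function, and the only radially decreasing characteristic function on $B_R(0)$ of measure $|E|$ is $\chi_{B_r(0)}$ with $|B_r(0)|=|E|$.

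For the converse, suppose $\ug=\chi_{B_r(0)}$ with $\overline{B_r(0)}\subset\Om$, and let $\phi:=\uphi>0$ be the associated first eigenfunction with eigenvalue $\La>0$, so
\begin{align*}
-\De\phi = \La\,\chi_{B_r(0)}\,\phi\;\text{in}\;\Om,\qquad \phi=0\;\text{on}\;\pa\Om.
\end{align*}
Proposition \ref{regularity}-(a) gives $\phi\in C^1(\overline{\Om})$. By Proposition \ref{characterization}, $\ug=F\circ\phi$ for an increasing $F:\R\to\R$. Since $F(\phi)=1$ on $B_r(0)$ and $F(\phi)=0$ on $\Om\setminus B_r(0)$, there is a threshold $c$ with $\phi\ge c$ on $B_r(0)$ and $\phi\le c$ on $\Om\setminus B_r(0)$; continuity then forces $\phi=c$ on $\pa B_r(0)$, and positivity of $\phi$ on $\pa B_r(0)\subset\Om$ gives $c>0$. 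The strong maximum principle, applied to the superharmonic $\phi$ on $B_r(0)$ (where $-\De\phi=\La\phi>0$) and to the harmonic $\phi$ on $\Om\setminus\overline{B_r(0)}$, sharpens these to $\phi>c$ in $B_r(0)$ and $0<\phi<c$ in $\Om\setminus\overline{B_r(0)}$.

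The shift $\psi:=\phi-c$ satisfies $-\De\psi=\La\psi+\La c$ in $B_r(0)$, $\psi>0$ inside, $\psi=0$ on $\pa B_r(0)$; the Gidas--Ni--Nirenberg moving plane theorem then forces $\phi$ to be radial and radially decreasing on $B_r(0)$, so that $\pa\phi/\pa\nu\equiv d$ for some constant $d<0$ on $\pa B_r(0)$. By the $C^1$ regularity of $\phi$, the same value $d$ is recovered as the (outward from $B_r(0)$) normal derivative from the side $\Om\setminus\overline{B_r(0)}$. Let $v$ be the explicit radial harmonic function on $\R^N\setminus\overline{B_r(0)}$ fixed by $v=c$, $\pa v/\pa\nu=d$ on $\pa B_r(0)$, namely $v(x)=\al+\be|x|^{2-N}$ if $N\ge 3$ and $v(x)=\al+\be\log|x|$ if $N=2$. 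On each connected component $U$ of $\Om\setminus\overline{B_r(0)}$ that touches $\pa B_r(0)$, both $\phi$ and $v$ are harmonic and share the same Cauchy data on $\pa U\cap\pa B_r(0)$; Holmgren's uniqueness theorem (equivalently, unique continuation for harmonic functions) forces $\phi\equiv v$ on $U$. A component of $\Om\setminus\overline{B_r(0)}$ disjoint from $\pa B_r(0)$ would have $\phi=0$ on its entire boundary and hence vanish identically by the maximum principle, contradicting $\phi>0$ on $\Om$. Because $v$ vanishes precisely on a unique sphere $\pa B_R(0)$ with $R>r$, the condition $\phi=0$ on $\pa\Om$ gives $\pa\Om\subset\pa B_R(0)$; connectedness of $\Om$ together with $\overline{B_r(0)}\subset\Om$ then upgrades this to $\Om=B_R(0)$.

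The main obstacle is the outside step: the full radial symmetry of $\Om$ must be extracted solely from the two constant boundary traces of $\phi$ on $\pa B_r(0)$, while tracking possibly multiple connected components of $\Om\setminus\overline{B_r(0)}$. The interplay of $C^1$ transmission across $\pa B_r(0)$, Gidas--Ni--Nirenberg inside, Holmgren-type unique continuation outside, and a maximum-principle argument to eliminate outer components not adjacent to $\pa B_r(0)$ resolves this.
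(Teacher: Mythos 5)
Your forward direction is identical to the paper's: on the ball, Remark~\ref{Cuccus_rslt} gives that $\ug$ is radial and radially decreasing, and since $\ug\in\E(\chi_E)$ is a characteristic function of measure $|E|$, its superlevel set $\{\ug=1\}$ must be a centered ball.

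For the converse the paper gives no proof of its own; it only asserts that it ``follows adapting the similar ideas used in \cite[Theorem 2]{Lamboley}.'' Your argument is therefore a genuine contribution: it is self-contained and, as far as I can check, sound. The ingredients are all legitimate in this setting: Proposition~\ref{regularity}-(a) applies since $\chi_{B_r(0)}\in L^\infty(\Om)$; Proposition~\ref{characterization} yields the level-set structure $\phi\ge c$ on $B_r(0)$, $\phi\le c$ outside, whence $\phi=c$ on $\pa B_r(0)$ by continuity and strict inequalities by the strong maximum principle on each side; Gidas--Ni--Nirenberg applies to $\psi=\phi-c$ in $B_r(0)$ with the Lipschitz nonlinearity $f(s)=\La(s+c)$, giving radiality and a constant (negative, by Hopf) outer normal derivative on $\pa B_r(0)$; and the $C^1(\overline\Om)$ transmission condition transfers this Cauchy data to the exterior side. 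Two small streamlinings are worth noting. First, once $\overline{B_r(0)}\subset\Om$, the set $\Om\setminus\overline{B_r(0)}$ has exactly one component touching $\pa B_r(0)$ (any such component must contain a full thin annulus $B_{r+\ep}(0)\setminus\overline{B_r(0)}\subset\Om$), so the enumeration of ``each component that touches $\pa B_r(0)$'' collapses to a single one and your maximum-principle elimination handles the rest. Second, rather than invoking Holmgren, one can extend $w:=\phi-v$ by zero into $B_r(0)$: the $C^1$ matching across $\pa B_r(0)$ makes the extension weakly harmonic in an annulus, hence real analytic and identically zero, giving $\phi\equiv v$ on the adjacent component by analytic continuation. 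With that, the conclusion $\pa\Om\subset\{v=0\}=\pa B_R(0)$ and $\Om=B_R(0)$ follows as you say. The only hypothesis you use beyond the statement is $\overline{B_r(0)}\subset\Om$; this is forced (up to a null set) by $\ug=\chi_{B_r(0)}\in\E(\chi_E)$ with $|E|=|B_r(0)|$ on $\Om$, but it would be worth a sentence to make that explicit.
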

\begin{proof}
Let $\Om=B_R(0)$. Since $g_0=\chi_E$, we have $\ug=\chi_F$ for some $F\subsetneq \Om$ with $|F|=|E|$. Now by Remark \ref{Cuccus_rslt}, $\ug$ is radial and radially decreasing. Thus, $\ug(0)=1$. Moreover, $|\{\ug = 1\}|=|\{g_0 =1\}|=|E|.$ Thus $F$ must be a ball centered at origin, i.e., $F=B_r(0)$ for some $r>0$ such that $|E|=|B_r(0)|$.
The proof of the converse result follows adapting the similar ideas used in \cite[Theorem 2]{Lamboley}.
\end{proof}

If a domain is symmetric with respect to the hyperplane $\pa H$, where $H\in \H_0$, then Theorem \ref{min_thm}-$(ii)$ states that any optimal triple remains either polarization invariant or dual-polarization invariant with respect to $H$. This is the finest result (in a certain sense) one can expect without any further assumptions on the domain. The following remark emphasizes this assertion.

\begin{remark}\label{non_rad} (Nonradiality on concentric annulus)
Let $\Om=B_{R+1}(0)\setminus \overline{B_R(0)}\subset \R^2$, where $R>0$ is sufficiently large and $V_0=0$. Then proceeding in the same way as in the proof of \cite[Theorem 6]{Chanillo}, we can show that there exists $g_0=\chi_D$, where $D\subset \Om$, such that $\ug$ is not radial. Consequently, any first eigenfunction $\phi$ associated to $\ug$ is  nonradial. 
\end{remark}
\begin{remark}[Nonuniqueness of minimizer]\label{nonuniq}
Nonuniqueness of minimizer of \eqref{OurProb_min} follows from the asymmetric nature of minimizers, as mentioned in Remark \ref{non_rad}. More precisely, let us choose a concentric annulus $\Om$ centered at the origin and a weight function $g_0$ such that a minimizing weight $\ug$ is not radial. Thus there exists $H\in \H_0$ such that $\ug\neq \ug_H$. Further, we show that $\ug_H$ is also a minimizer for \eqref{OurProb_min} (in the proof of Theorem \ref{min_thm}-$(i)$). Hence the minimizer for \eqref{OurProb_min} is not unique.
\end{remark}
 
As a consequence of Theorem \ref{min_thm}-$(i)$, next, we prove Corollary \ref{Steiner_theo}, which assures that an optimal pair $(\phi,\ug)$ preserves the Steiner symmetry if the underlying domain is Steiner symmetric. \vspace{0.2 cm} \\
\noi \textbf{Proof of Corollary \ref{Steiner_theo}:}
Let $H\in \H_0$ and $\Om$ be Steiner symmetric with respect to $\pa H$. Since the Laplace operator is invariant under isometries, without loss of generality, we assume that $H=\{\big(x_1,x_2,\dots,x_N\big)\in \R^N: x_N<0\}$, i.e., $\Om$ is Steiner symmetric with respect to the hyperplane $\pa H=\{\big(x_1,x_2,\dots,x_N\big)\in \R^N: x_N=0\}$. Let $\H_*\subset \H_0$ be the collection of all open half-spaces $\widetilde{H}$ containing $\pa H$ such that $\pa \widetilde{H}$ is parallel to $\pa H$. Therefore, using Proposition \ref{Steiner_char}-$(i)$, we have $\Om_{\widetilde{H}}=\Om$ for all $\widetilde{H}\in \H_*$. Since $\Om$ is symmetric with respect to $\pa H$, it is easy to observe that 
$\sigma_{\widetilde{H}}(\Om)\neq \Om$ for every $\widetilde{H}\in \H_*$. Hence by Theorem \ref{min_thm}-$(i)$, we get $\phi_{\widetilde{H}}=\phi$ and $\ug_{\widetilde{H}}=\ug$ in $\Om$ for all $\widetilde{H}\in \H_*$. Therefore by Proposition \ref{Steiner_char}-$(ii)$, we conclude that $\phi$ and $\ug$ are Steiner symmetric in $\Om$. 
\qed

Now we study the foliated Schwarz symmetry of the minimizers. First, we prove Theorem \ref{Foliation_minimization}. Then we discuss some of its consequences. \vspace{0.2 cm} \\
\noi \textbf{Proof of Theorem \ref{Foliation_minimization}:}
$(i)$ By the hypothesis, $\Om=B_R(0)\setminus \overline{B_r(0)}$, where $0<r<R$. Recall that $\widehat{\H}_0=\{H\in \H_0:0\in \pa H\}$. For each $H\in \widehat{\H}_0$, we have $\sigma_H(\Om)=\Om$, and we apply Theorem \ref{min_thm}-$(ii)$ to get $\phi_H=\phi$ or $\phi^H=\phi$  in $\Om$. Therefore, from Proposition \ref{Foliated_char}-$(i)$, there exists $\ga\in \S^{N-1}$ such that $\phi$ is foliated Schwarz symmetric in $\Om$ with respect to $\ga$. Hence using Proposition \ref{Foliated_char}-$(ii)$, we get $\phi_H=\phi,\;\forall\,H\in \widehat{\H}_0(\ga)$. Further, following the arguments as given in the proof of Theorem \ref{min_thm}-$(ii)$, we also get $\ug_H=\ug$ and $ \uv^H=\uv,\;\forall\,H\in \widehat{\H}_0(\ga)$. Therefore, from the sufficient condition for the foliated Schwarz symmetrization (Proposition \ref{Foliated_char}-$(ii)$), we conclude $\ug$ is foliated Schwarz symmetric in $\Om$ with respect to $\ga$. Moreover, since $\uv^H=\uv$ for $H\in \widehat{\H}_0(\ga)$, from  Remark \ref{Facts_polarization}-$(ii)$ we have $\uv_{\widetilde{H}}=\uv$, where $\widetilde{H}=\overline{H}^c\in \widehat{\H}_0(-\ga)$. Since $H$ is arbitrary, $\uv_H=\uv,\;\forall\,H\in \widehat{\H}_0(-\ga)$. Now again from Proposition \ref{Foliated_char}-$(i)$, it follows that $\uv$ is foliated Schwarz symmetric in $\Om$ with respect to $-\ga$.

\noi$(ii)$ In this case, $\Om=B_R(0)\setminus \overline{B_r(te_1)}$, where $0< t<R-r$, and $V_0=0$. Recall that
$\widehat{\H}_0(-e_1)=\{H\in \widehat{\H}_0:-e_1\in H\}.$
 It is easy to observe that $\Om_H=\Om$ and $\sigma_H(\Om)\neq \Om$ for every $H\in \widehat{\H}_0(-e_1)$. Thus by Theorem \ref{min_thm}-$(i)$, we have $\phi_H=\phi$ and $\ug_H=\ug$ for every $H\in \widehat{\H}_0(-e_1)$. Therefore, $\phi$ and $\ug$ are foliated Schwarz symmetric with respect to $-e_1$ in $\Om$ (by Remark \ref{Foliated_noncon}). 
\qed

\begin{corollary}
Let $u_1$ be a positive eigenfunction associated to the first eigenvalue $\la_1$ of the following eigenvalue problem on $\Om=B_R(0)\setminus \overline{B_r(te_1)}$, where $0<t<R-r$:
\begin{equation*}
    -\De u=\la u \;\text{in}\;\Om\; ;\;u=0\;\text{on}\; \pa \Om.
\end{equation*}
Then $u_1$ is foliated Schwarz symmetric with respect to $-e_1$ on $\Om$.
\end{corollary}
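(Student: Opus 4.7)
The plan is to realize this corollary as a direct specialization of Theorem \ref{Foliation_minimization}-$(ii)$ to the trivial rearrangement classes. Take $g_0 \equiv 1$ and $V_0 \equiv 0$ on $\Om = B_R(0)\setminus \overline{B_r(te_1)}$. Since constants have singleton rearrangement classes, $\E(g_0) = \{1\}$ and $\E(V_0) = \{0\}$, so the minimization problem \eqref{OurProb_min} collapses: the only admissible pair is $(\ug, \uv) = (1,0)$, and
\[
\La_{\min}(g_0, V_0) = \La(1, 0) = \inf\left\{ \frac{\int_{\Om} |\Gr \phi|^2}{\int_{\Om} \phi^2} : \phi \in \hh2 \setminus \{0\}\right\} = \la_1,
\]
with the infimum attained exactly by the positive first eigenfunctions $u_1$ of the Dirichlet Laplacian.

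Next I would verify that $g_0=1$ and $V_0=0$ meet the hypotheses \eqref{A}: clearly $g_0, V_0 \in L^{\infty}(\Om) \subset X$, $g_0^+ \not\equiv 0$, $V_0^- \equiv 0$ satisfies any norm bound, and both are nonnegative, so the full list of assumptions required for Theorem \ref{Foliation_minimization}-$(ii)$ holds. Since $0 < t < R-r$ (the non-concentric case) and $V_0 = 0$, Theorem \ref{Foliation_minimization}-$(ii)$ applies and yields that the eigenfunction $\uphi = u_1$ of the optimal triple is foliated Schwarz symmetric in $\Om$ with respect to $-e_1$.

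There is really no obstacle here beyond checking that the specialization is legitimate; the only subtle point is the identification $(\uphi, \ug, \uv) = (u_1, 1, 0)$ in the degenerate setting where $g_0$ and $V_0$ are constant, which is precisely the convention fixed just after the statement of Theorem \ref{optimization} (\emph{optimal pair} notation when the rearrangement class is a singleton). Once this identification is made, the conclusion of Theorem \ref{Foliation_minimization}-$(ii)$ applied to $u_1$ is exactly the statement of the corollary, and the assertion about $\ug$ is vacuous. Thus the proof is essentially one line plus the verification of hypotheses.
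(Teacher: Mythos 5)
Your proposal is correct and takes exactly the same route as the paper: both identify $(\uphi,\ug,\uv)=(u_1,1,0)$ as a trivial optimal triple because $\E(1)=\{1\}$ and $\E(0)=\{0\}$ are singletons, and then invoke Theorem \ref{Foliation_minimization}-$(ii)$. Your write-up is simply more explicit than the paper's one-line proof in verifying that $g_0=1,V_0=0$ satisfy \eqref{A}, but the underlying argument is identical.
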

\begin{proof}
We note that if $g_0=1$, then $\E(g_0)=\{g_0\}$. Thus $\La_{\min}(g_0)=\la_1$ and hence $(u_1,g_0)$ is an optimal pair. Now the assertion follows from $(ii)$ of Theorem \ref{Foliation_minimization}.
\end{proof}

\begin{remark}\label{rema_foliation}
\begin{enumerate}[(i)]
    \item  Let $\Om=B_R(0)\setminus\overline{B_r(te_1)},\;0< t<R-r$ and $V_0=0$. Let $(\phi, \ug)$ be an optimal pair as given in Theorem \ref{optimization}-$(i)$. Then from Theorem \ref{Foliation_minimization}-$(ii)$, $\phi$ is axially symmetric with respect to the axis $\R e_1$ and decreasing in the polar angle $\arccos\big( \frac{-x\cdot e_1}{|x|}\big)$. If $g_0\in L^q(\Om)$ with $q>\frac{N}{2}$, then continuity of $\phi$ (Proposition \ref{regularity}) along with the foliated Schwarz symmetry ensures that maxima of $\phi$ is attained on $ \Om\cap (-\R^+ e_1)$.

\item Let $\Om=B_R(0)\setminus \overline{B_r(0)}$, where $0<r<R$. Also let $g_0=1$, $V_0=\al\chi_D$, where $\al>0$, and  $D\subset \Om$. Observe that in this case $\uv=\al \chi_E$ for some $E\subset \Om$ with $|E|=|D|$. In \cite[Theorem 6]{Chanillo}, the authors showed that there exist $R,r,V_0$ for which $E$ is not rotationally symmetric. However, using Theorem \ref{Foliation_minimization}-$(i)$ we conclude that for any $ R,r$ with $ 0<r<R$ and $D$, the function $\al \chi_E$ and hence $E$ is axially symmetric with respect to some axis passing through the origin. Thus the axial symmetry of $E$ does not depend on the choices of $R$, $r,$ and $D$.
\end{enumerate}
\end{remark}
\begin{figure}
    \includegraphics[width=\textwidth,trim={0 1.2cm 0 1.6cm},clip,scale=0.6]{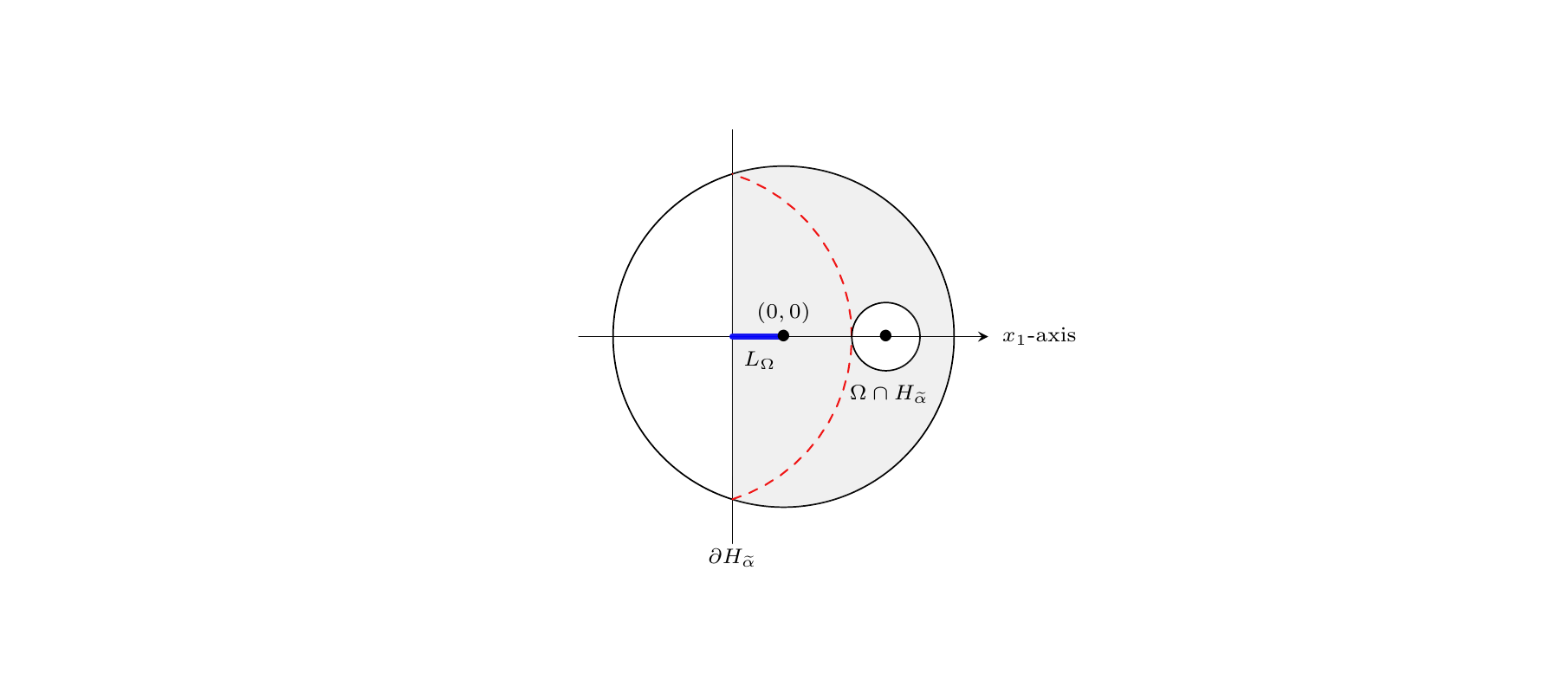}
    \caption{Location of maxima of $\phi$ in $B_R(0)\setminus\overline{B_r(te_1)}$.}
    \label{Annulus_pic}
\end{figure} 

\noi \textbf{Proof of Corollary \ref{Maxima_location}:}
Let $\Om=B_{R}(0)\setminus \overline{B_r(te_1)}$, where $0< t<R-r$. For $\alpha\in \R$, let $H_{\alpha}\in \H$ be defined as
$$H_\alpha=\left\{\big(x_1,x_2,\dots,x_N\big)\in \R^N: x_1> \alpha \right\}.$$
Then it is easy to observe that $\Om_{H_\alpha}=\Om,\;\forall\,\alpha\leq -\frac{R+r-t}{2}$ (however, if $\alpha> -\frac{R+r-t}{2}$, then $\Om_{H_\alpha}\neq\Om$). Let $\widetilde{\alpha}=-\frac{R+r-t}{2}$. Then $\Omu=\Om$ and obviously $\sigma_{\Halu}(\Om)\neq \Om$. Therefore by Theorem \ref{min_thm}-$(i)$, we have $\phi_{\Halu}=\phi$ in $\Om$. Since $g_0\in L^q(\Om)$ for $q>\frac{N}{2}$, by standard elliptic regularity (Proposition \ref{regularity}-$(a)$), $\phi\in C^1(\Om)$. Thus
\begin{equation}\label{mono_1}
    \phi(x)\leq \phi(\sigma_{\Halu}(x)),\;\forall\,x\in \Om\cap  \Halu^c.
\end{equation}
We recall that $L_{\Om}=\big\{x\in \Om\cap (-\R^+ e_1): x_1\geq \widetilde{\alpha}=-\frac{R+r-t}{2}\big\}$ (see Figure \ref{Annulus_pic}). Then $L_{\Om}\subset \Om\cap \overline{\Halu}$ and hence from \eqref{mono_1}, we have \begin{equation}\label{mono_2}
    \phi(x)\geq \phi(\sigma_{\Halu}(x)), \;\forall\,x\in L_\Om.
\end{equation} 
Also by $(i)$ of Remark \ref{rema_foliation}, %$\phi(x)\geq \phi(\sigma_H(x)),\;\forall\,x\in\Om\cap H$ with $\sigma_H(x)\in \Om$, where $H\in \widehat{\H}_0(-e_1)$. Hence
\begin{equation}\label{mono_3}
    \max\limits_{x\in \Om}\phi(x)=\max\limits_{x\in \Om\cap (-\R^+ e_1)} \phi(x).
\end{equation}
Thus using \eqref{mono_2} and \eqref{mono_3}, we conclude that 
$\max\limits_{x\in \Om}\phi(x)=\max\limits_{x\in L_{\Om}}\phi(x).$ If $\ug$ is continuous, we can repeat the process, and hence the assertion follows.
\qed
\begin{remark}
We emphasize that for $g_0\in L^\infty(\Om)$ and $\frac{2N+2}{N+2}<p<\infty$,  using a stronger version of comparison principle \cite[Theorem 1.3]{Sciunzi} and adapting similar techniques as given in this article, one can prove all the symmetry results obtained in Theorem \ref{min_thm}-\ref{Foliation_minimization} and Corollary \ref{Steiner_theo}-\ref{Maxima_location}. However, when $p \neq 2$ and $g_0$ is not bounded, the extension of the results obtained in Section \ref{symmetry_section} seems challenging due to the lack of comparison principles which plays an important role in our proofs. 
\end{remark}

\section{Appendix}\label{appendix_section}
In this section, we study the existence and some properties of $\La(g, V)$. Let $X$ be as given in \eqref{A}. For $g, V \in X$,  we consider the following functionals on $\wp$: 
\begin{align*}
   G(\phi) = \dis g|\phi|^p; \;  J(\phi) = \dis |\Gr \phi|^p + V |\phi|^p, \quad \forall \, \phi \in \wp.
\end{align*}
One can verify that $G,J \in C^1(\wp, \R)$. 
\begin{remark}\label{G compact}
For $N>p$ and $g \in L^{\frac{N}{p}}(\Om)$, using \cite[Lemma 4.1]{Anoop-p} the map $G$ is compact on $\wp$. For $N \le p$, the compactness of $G$ holds from the compact embeddings of $\wp \hookrightarrow L^r(\Om)$ with $r \in (1, \infty)$ (when $N=p$) and $\wp \hookrightarrow L^{\infty}(\Om)$ (when $N < p$).
\end{remark}

The functional $J$ may not be coercive on $\wp$ for any sign-changing $V \in X$. However, in the following lemma under a suitable integrability assumption on $V^-$ we show that $J$ is coercive on $\wp$.
\begin{lemma}\label{coercive}
Let $V$ satisfies assumptions as given in \eqref{A}. Then there exists $\de_0 \in (0,1)$ such that 
\begin{align*}
    \dis  |\Gr \phi|^p + V |\phi|^p  \ge \de_0 \dis |\Gr \phi|^p, \quad \forall \phi \, \in \wp.
\end{align*}
\end{lemma}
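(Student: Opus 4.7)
The plan is to split $V$ into positive and negative parts and show that the contribution of $V^-$ can be absorbed into the gradient term using the Sobolev embedding and the smallness condition in (A2). Note that the $\de_0 \in (0,1)$ appearing in the conclusion is precisely the one furnished by the hypothesis \eqref{A}.

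First, since $V = V^+ - V^-$ with $V^+ \ge 0$, for every $\phi \in \wp$ we have the elementary bound
\[
\dis |\Gr \phi|^p + V|\phi|^p \;\ge\; \dis |\Gr \phi|^p - \dis V^- |\phi|^p.
\]
The task thus reduces to controlling $\int_\Om V^- |\phi|^p$ by a $(1-\de_0)$-fraction of $\int_\Om |\Gr \phi|^p$.

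Next, I would apply H\"older's inequality in two cases. If $N > p$, take the conjugate pair $\bigl(\tfrac{N}{p}, \tfrac{N}{N-p}\bigr) = \bigl(\tfrac{N}{p}, \tfrac{p^*}{p}\bigr)$ to obtain
\[
\dis V^- |\phi|^p \;\le\; \norm{V^-}_{\frac{N}{p}} \norm{\phi}_{p^*}^{p},
\]
and then use the Sobolev embedding $\wp \hookrightarrow L^{p^*}(\Om)$ with constant $S$ to get $\norm{\phi}_{p^*}^p \le S^p \norm{\Gr \phi}_p^p$. If $N \le p$, pick the conjugate pair $(q, q')$ from \eqref{A} and note that $\wp \hookrightarrow L^{pq'}(\Om)$ with constant $S$, so that
\[
\dis V^- |\phi|^p \;\le\; \norm{V^-}_{q} \norm{\phi}_{pq'}^{p} \;\le\; \norm{V^-}_{q}\, S^p \norm{\Gr \phi}_p^p.
\]
In both cases the assumption $\norm{V^-}_{X} \le (1-\de_0)/S^p$ yields
\[
\dis V^- |\phi|^p \;\le\; (1-\de_0)\dis |\Gr \phi|^p.
\]

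Plugging this into the initial bound gives the desired coercivity
\[
\dis |\Gr \phi|^p + V|\phi|^p \;\ge\; \de_0 \dis |\Gr \phi|^p, \quad \forall\, \phi \in \wp.
\]
There is no genuine obstacle: the constant $\de_0$ has been built into the statement precisely so that it matches the gap produced by H\"older plus the Sobolev embedding. The only thing worth double-checking is the verification that the conjugate exponents chosen really satisfy $pq' \le p^*$ (respectively equal $p^*$ when $N>p$), so that the Sobolev embedding with constant $S$ is applicable in each regime.
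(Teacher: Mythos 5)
Your proof is correct and follows essentially the same route as the paper: drop the $V^+$ term, bound $\int_\Om V^-|\phi|^p$ via H\"older with conjugate pair $(\tfrac{N}{p},\tfrac{p^*}{p})$ (resp.\ $(q,q')$), apply the Sobolev embedding, and invoke (A2). The only difference is that the paper writes out only the case $N>p$ and states that $N\le p$ is analogous, whereas you spell out both; your closing worry about ``$pq'\le p^*$'' is not the right check when $N\le p$ (there $p^*$ is not defined), since in that regime the embedding $\wp\hookrightarrow L^{pq'}(\Om)$ holds for any $q'<\infty$ and $S$ is by definition the constant of that embedding.
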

\begin{proof}
Let $N>p$. For $\phi \in \wp$, using the embedding $\wp \hookrightarrow L^{p^*}(\Om)$ 
%where $r=p^*$ if $N>p$, $r \in (1, \infty)$ if $N=p$, and $r=\infty$ if $N<p$, 
we get $$\int_{\Om} V^- \phi^p \le  \norm{V^-}_{\frac{N}{p}} \norm{\phi^p}_{\frac{p^*}{p}} = \norm{V^-}_{\frac{N}{p}} \norm{\phi}^p_{p^*} \le S^p  \norm{V^-}_{\frac{N}{p}} \int_{\Om} |\Gr \phi|^p.$$ Hence 
\begin{align*}
    \dis |\Gr \phi|^p + V \phi^p  \ge \dis |\Gr \phi|^p - V^-\phi^p  \ge  \left( 1-S^p \norm{V^-}_{\frac{N}{p}} \right) \dis |\Gr \phi|^p \ge \delta_0 \dis |\Gr \phi|^p,
\end{align*}
$\forall \, \phi \in \wp$. Therefore, the functional $J$ is coercive on $\wp$. For $N \le p$, the coercivity of $J$ follows using same arguments. 
\end{proof}

\begin{theorem} \label{Existence1}
Let $\Om$ be a bounded domain in $\R^N$.  Assume that $g, V$ satisfies \eqref{A}. Then
$$\La(g,V) =  \inf \left\{\frac{\int_{\Om} |\Gr \phi|^p + V\phi^p } { \int_{\Om} g \phi^p}: \phi \in \wp, \int_{\Om} g \phi^p > 0  \right\}$$ is attained. Moreover, $\La(g,V)$ is principal and simple. 
\end{theorem}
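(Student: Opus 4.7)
The plan is to carry out the classical three-step scheme: prove existence of a minimizer by the direct method of the calculus of variations, deduce positivity (hence principality) from the strong maximum principle for the $p$-Laplacian, and derive simplicity by testing with the Picone ratio $\phi_1^p/\phi_2^{p-1}$.

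For existence, I would first note that $\La(g,V) > 0$: by H\"older with the exponents in \eqref{A} and the Sobolev embedding constant $S$, one has $\dis g|\phi|^p \le S^p \|g\|_X \dis |\Gr \phi|^p$, which combined with Lemma \ref{coercive} yields $J(\phi)/G(\phi) \ge \de_0/(S^p \|g\|_X) > 0$. I would then take a minimizing sequence $(\phi_n) \subset \wp$ with $G(\phi_n) = 1$ and $J(\phi_n) \to \La(g,V)$. Lemma \ref{coercive} gives $\de_0 \|\Gr \phi_n\|_p^p \le J(\phi_n)$, so $(\phi_n)$ is bounded in $\wp$; pass to a weakly convergent subsequence $\phi_n \wra \uphi$. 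Remark \ref{G compact} applied with weight $g$ (and also with $V$ in place of $g$) yields $G(\phi_n) \to G(\uphi) = 1 > 0$ and $\int_\Om V \phi_n^p \to \int_\Om V \uphi^p$. Combining with the weak lower semicontinuity of $\phi \mapsto \|\Gr \phi\|_p^p$ gives $J(\uphi) \le \liminf J(\phi_n) = \La(g,V)$, so $\uphi$ is a minimizer and solves \eqref{EVP} with $\la = \La(g,V)$.

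For the principal property, replacing $\uphi$ by $|\uphi|$ changes neither $J$ nor $G$, so I may assume $\uphi \ge 0$; Proposition \ref{regularity} gives continuity of $\uphi$, and after rewriting the equation as $-\Dep \uphi + V^+ \uphi^{p-1} = \big(\La(g,V) g + V^-\big) \uphi^{p-1} \ge 0$, V\'azquez's strong maximum principle for quasilinear operators yields $\uphi > 0$ in $\Om$. For simplicity, let $\phi_1, \phi_2$ be two positive first eigenfunctions. Picone's identity,
$$|\Gr \phi_1|^p - |\Gr \phi_2|^{p-2} \Gr \phi_2 \cdot \Gr\!\left(\frac{\phi_1^p}{\phi_2^{p-1}}\right) \ge 0,$$
with equality a.e.\ iff $\phi_1/\phi_2$ is constant, combined with testing the equation for $\phi_2$ against $\phi_1^p/(\phi_2+\ep)^{p-1}$ and passing $\ep \to 0$ (as in \cite{AMM,Cuesta2009,Leadi}), forces the chain
$$\La(g,V) \dis g \phi_1^p \le \dis \big(|\Gr \phi_1|^p + V \phi_1^p\big) = \La(g,V) \dis g \phi_1^p.$$
Equality throughout Picone then forces $\phi_1 = c \phi_2$ for some $c > 0$.

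The main obstacle I anticipate is the $\ep \to 0$ passage in the Picone step: since $g$ and $V$ belong only to $X$ (not to $L^\infty$), the dominated-convergence argument must be justified using the Sobolev embedding $\wp \hookrightarrow Y$ from \eqref{A} together with the $L^\infty$ control on the test function $\phi_1/(\phi_2+\ep)$ near $\pa\Om$, which follows from Hopf's boundary-point lemma for the $p$-Laplacian. The existence and positivity steps are essentially routine given Remark \ref{G compact}, Lemma \ref{coercive}, and Proposition \ref{regularity}.
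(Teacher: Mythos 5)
Your overall architecture matches the paper exactly: direct method with coercivity (Lemma~\ref{coercive}) and compactness of the weight terms (Remark~\ref{G compact}) for existence; strong maximum principle applied to $|\Phi_1|$ for principality; Picone's identity with the $\ep$-regularized test function $\Phi_1^p/(\Phi_2+\ep)^{p-1}$ for simplicity. The existence and simplicity steps are essentially identical to the paper's. (Your mention of Hopf's boundary-point lemma for the $\ep\to 0$ passage is unnecessary: since $\Phi_2^{p-1}\Phi_1^p/(\Phi_2+\ep)^{p-1}\le\Phi_1^p$ pointwise, the uniform dominating function $2|\La g - V|\Phi_1^p$ is integrable by the Sobolev embedding $\wp\hookrightarrow Y$ alone, with no boundary estimate needed.)

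However, the principality step as written has a sign error that would break the argument. You rewrite the equation as $-\Dep \uphi + V^+ \uphi^{p-1} = (\La(g,V)\,g + V^-)\uphi^{p-1}$ and claim the right-hand side is nonnegative. Under \eqref{A} the weight $g$ is allowed to change sign (only $g^+\not\equiv 0$ is assumed), so $\La(g,V)\,g + V^-$ can be negative on a set of positive measure and the inequality $\ge 0$ fails. The correct decomposition, used in the paper, moves $\La g^-$ to the left as well: $-\Dep|\Phi_1| + (V^+ + \La g^-)|\Phi_1|^{p-1} = (V^- + \La g^+)|\Phi_1|^{p-1}\ge 0$. Relatedly, the potential $V^+ + \La g^-$ appearing on the left lies only in $X=L^{N/p}(\Om)$ (resp.\ $L^q$), so V\'azquez's classical strong maximum principle for bounded zero-order coefficients does not directly apply; one needs a version that accommodates unbounded potentials with the integrability condition $\int_\Om (V^+ + \La g^-)|\Phi_1|^{p-1}<\infty$. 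The paper handles this by invoking Kawohl--Lucia--Prashanth \cite[Proposition~3.2 and Corollary~3.3]{KLP}, and you would need to replace V\'azquez by a result of that type. Both fixes are local and do not change the overall strategy.
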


\begin{proof}
Due to the homogeneity of the Rayleigh quotient, we write 
\begin{align*}
  \La =  \inf \left\{ \frac{J(\phi)}{G(\phi)}: \phi \in \wp, G(\phi)> 0\right\}  = \inf \left\{ J(\phi) : \phi \in \wp, G(\phi)=1  \right\}.
\end{align*}
\textbf{Existence of $\La(g, V)$}:  Let $(\phi_n)$ be a minimizing sequence in $\wp$ such that $J(\phi_n) \ra \La(g, V)$ as $n \ra \infty$. By Lemma \ref{coercive}, the sequence $(\phi_n)$ is bounded in $\wp$. By the reflexivity, up to a subsequence $\phi_n \rightharpoonup \Phi_1$ in $\wp$. Since $N$ is weakly closed by the compactness of $G$ (Remark \ref{G compact}), $\Phi_1 \in N$. Moreover, using the lower semicontinuity of $\norm{\Gr (\cdot)}_p$, 
\begin{align*}
    \La(g,V) = \lim_{n \ra \infty} \dis  |\Gr \phi_n|^p + V |\phi_n|^p \ge \dis |\Gr \Phi_1|^p + V |\Phi_1|^p  \ge \La(g, V). 
\end{align*}
Thus $\La(g, V)$ is attained, and $\Phi_1$ is a critical point of $J$ on $N$. Therefore, by the Lagrange multiplier, $\La(g, V)$ is an eigenvalue of \eqref{EVP} and $\Phi_1$ is an eigenfunction corresponding to $\La(g, V)$.

\noi \textbf{$\La(g, V)$ is principal}: Let $\Phi_1$ be an eigenfunction of \eqref{EVP} corresponding to $\La(g, V)$. Then $|\Phi_1| \in \wp$ is also an eigenfunction  corresponding to $\La(g, V)$. For $\psi \in \c1(\Om)$ with $\psi \ge 0$, 
\begin{align*}
    \dis |\Gr (|\Phi_1|)|^{p-2} \Gr (|\Phi_1|) \cdot \Gr \psi + (V^+ +  \La g^-) |\Phi_1|^{p-1}\psi = \dis (V^- + \La g^+)|\Phi_1|^{p-1}\psi \ge 0. 
\end{align*}
Moreover, $V^+ +  \La g^- \ge 0$ and $$\int_{\Om} |V^+ + \La g^-| |\Phi_1|^{p-1} \le \left( \int_{\Om} |V^+ + \La g^-| \right)^{\frac{1}{p}} \left( \int_{\Om} |V^+ + \La g^-| |\Phi_1|^p \right)^{\frac{1}{\p}} < \infty.$$ Thus $|\Phi_1| \in \wp$ satisfies all the properties of \cite[Proposition 3.2]{KLP} (for $N>p$) and \cite[part (b) of Corollary 3.3]{KLP} (for $N \le p$). Therefore, $|\Phi_1|>0$ a.e. in $\Om$.

\noi \textbf{$\La(g, V)$ is simple}:  Suppose $\Phi_1$ and $\Phi_2$ are two eigenfunctions of \eqref{EVP} corresponding to $\La(g,V)$. Without loss of generality we assume that $\Phi_1,\Phi_2>0$ a.e. in $\Om.$ Set $P(\Phi_1,\Phi_2) := |\Gr \Phi_1|^p + (p-1) \frac{\Phi_1^p}{\Phi_2^p} \abs{\Gr \Phi_2}^p - p \frac{\Phi_1^{p-1}}{\Phi_2^{p-1}} \abs{\Gr \Phi_2}^{p-2} \Gr \Phi_2$ and $ R(\Phi_1,\Phi_2) := \abs{\Gr \Phi_1}^p - \abs{\Gr \Phi_2}^{p-2} \Gr \left( \frac{\Phi_1^p}{\Phi_2^{p-1}} \right) \cdot  \Gr \Phi_2.$
Let $\ep > 0$ be given. Then using the Picone's identity (\cite[Theorem 1.1]{Allegretto-Huang1998}),
\begin{align*}
\dis P(\Phi_1, \Phi_2 + \ep) = \dis R(\Phi_1, \Phi_2 + \ep) & = \dis  |\Gr \Phi_1|^p - |\Gr \Phi_2|^{p-2} \Gr \left( \frac{\Phi_1^p}{(\Phi_2 + \ep)^{p-1}} \right) \cdot \Gr \Phi_2  \\
& = \dis (\La g - V) \left( \Phi_1^p - \Phi_2^{p-1} \frac{\Phi_1^p}{(\Phi_2 + \ep)^{p-1}} \right).
\end{align*}
Now we let $\ep \rightarrow 0$ and apply the dominated convergence theorem to get $\int_{\Om} P(\Phi_1,\Phi_2) = 0.$ Since $P(\Phi_1,\Phi_2) \ge 0$, we obtain $P(\Phi_1, \Phi_2) = 0$ a.e. in $\Om$. Therefore, again using the Picone's identity (\cite[Theorem 1.1]{Allegretto-Huang1998}), we get that $\Phi_1$ is a constant multiple of $\Phi_2$. Thus $\La$ is simple.
\end{proof}

\section{Acknowledgments}
 The authors are grateful to Prof. T. V. Anoop for his valuable suggestions and comments, which improved the article. The second author acknowledges the support of the Israel Science Foundation (grant 637/19) founded by the Israel Academy of Sciences and Humanities.
%\bibliography{ref}
%\bibliographystyle{abbrv}

\end{document}